\newtheoremstyle{mystyle}
{3pt}
{3pt}
{\itshape}
{}
{\bfseries}
{.}
{.5em}
{}
\theoremstyle{mystyle}
\newtheorem{definition}{Definition}[section]
\newtheorem{theorem}{Theorem}[section]
\newtheorem{lemma}{Lemma}[section]
\newtheorem{example}{Example}[section]
\newtheorem{proposition}{Proposition}[section]
\newtheorem*{remark}{Remark}
\renewenvironment{proof}{{\noindent\bfseries Proof:}}{\qed}
\renewenvironment{proof}[1][\proofname]{%
	\par\pushQED{\qed}\normalfont%
	\topsep6\p@\@plus6\p@\relax
	\trivlist\item[\hskip\labelsep\bfseries#1\@addpunct{.}]%
	\ignorespaces
}{%
	\popQED\endtrivlist\@endpefalse
}
\newcommand{\qte}[1]{`{#1}'}					
\newcommand{\term}[1]{{\bfseries #1}}				
\newcommand{\st}{\,{\textnormal{such that}}\,}			
\newcommand{\et}{\,{\textnormal{and}}\,}			
\newcommand{\vel}{\,{\textnormal{or}}\,}			
\newcommand{\siv}{\,{\textnormal{if}}\,}			
\newcommand{\tun}{\,{\textnormal{then}}\,}			
\newcommand{\wenn}{\,{\textnormal{if and only if}}\,}		
\newcommand{\wennif}{\,{\textnormal{iff}}\,}			
\newcommand{\eg}{\textit{e.g.}}					
\newcommand{\ie}{\textit{i.e.}}					
\newcommand{\etc}{\textit{etc.}}				
\newcommand{\dft}{\textit{de facto}\,}				
\newcommand{\sans}{\textit{sans}\,}				
\newcommand{\avec}{\textit{avec}\,}				
\newcommand{\ifth}[2]{\textnormal{if}\, \ensuremath{#1}, \textnormal{then}\, \ensuremath{#2}} 
\newcommand{\orth}[1]{\bt{\textnormal{#1}}}					
\newcommand{\ol}[1]{\textit{#1}} 					
\newcommand{\com}{,\,}						
\newcommand{\pnt}{\,.\,}					
\newcommand{\unq}{\ensuremath{\iota}}				
\newcommand{\f}[1]{\ensuremath{#1}} 				
\newcommand{\fun}[2]{{\ensuremath{#1(~{#2}~)}}}			
\newcommand{\funt}[3]{{\ensuremath{#1(~{#2}~)(~{#3}~)}}}	
\newcommand{\funeq}[3]{{\ensuremath{#1(~{#2}~)=#3}}}		
\newcommand{\funeqt}[4]{{\ensuremath{#1(~{#2}~)(~{#3}~)=#4}}}	
\newcommand{\esc}[1]{\,{\textnormal{#1}}\,}					
\newcommand{\bc}[2][]{\ensuremath{\left\lbrace~{#2}~\right\rbrace_{#1}}} 	
\newcommand{\bcdef}[2]{\ensuremath{\left\lbrace~{#1} \mid {#2}~\right\rbrace}} 	
\newcommand{\bt}[2][]{\ensuremath{\left\langle~{#2}~\right\rangle_{#1}}}      	
\newcommand{\bp}[2][]{\ensuremath{\left(~{#2}~\right)_{#1}}}                  	
\newcommand{\catg}[1]{\ensuremath{\mathcal{#1}}}					
\newcommand{\cat}[1]{\textnormal{\upshape{\textbf{#1}}}} 				
\newcommand{\sph}{\ensuremath{\varphi}}							
\newcommand{\sps}{\ensuremath{\psi}}							
\newcommand{\mph}[2][ ]{\ensuremath{{#2}_{#1}}}						
\newcommand{\id}[1]{\textnormal{id\textsubscript{\ensuremath{#1}}}}			
\newcommand{\ob}{{\ensuremath{\textnormal{\normalfont\textsc{obj}}}}}	
\newcommand{\dom}[1]{\ensuremath{\textnormal{\normalfont\textsc{dom}}\bp{#1}}}		
\newcommand{\ran}[1]{\ensuremath{\textnormal{\normalfont\textsc{codom}}\bp{#1}}}	
\newcommand{\mdl}[2][ ]{\ensuremath{\mathcal{#2}_{#1}}}		
\newcommand{\gsn}{\ensuremath{g}}				
\newcommand{\gsnt}[1]{\ensuremath{g\left[~{#1}~\right]}}	
\newcommand{\fs}{\ensuremath{\mathcal{FS}}}			
\newcommand{\fl}{\ensuremath{\mathcal{FL}}}			
\newcommand{\nl}{\ensuremath{\mathcal{NL}}}			
\newcommand{\clc}{\ensuremath{\mathcal{C}}}			
\newcommand{\den}[2][]{{\ensuremath{\llbracket}\,{\esc{\textbf{#2}}}\,\ensuremath{\rrbracket}}\ensuremath{^{{#1}}}}	
\newcommand{\lam}[2]{\ensuremath{\lambda {#1} : {#1} \in \mathcal{D}_{#2}}} 						
\newcommand{\defdet}[2]{\ensuremath{\left[\lam{f}{\bt{e,t}}\esc{and}\unq{#1}\in\mathcal{D}_{e} : \funeq{f}{#1}{1}\pnt\unq{#2}\in\mathcal{D}_{e} : \funeq{g}{#2}{1}\right]}}	
\newcommand{\sentence}[1]{\ensuremath{\begin{cases}1 & \esc{if #1} \\ 0 & \esc{otherwise}\end{cases}}}							
\newcommand{\nn}[2]{\ensuremath{\left[ \lam{#1}{e}\pnt {#1} \esc{#2} \right]}}						
\newcommand{\vtr}[3]{\ensuremath{\left[ \lam{#1}{e}\pnt \left[ \lam{#2}{e}\pnt #2 \esc{#3} {#1} \right]\right]}} 	
\newcommand{\nnt}[3]{\ensuremath{ \left[ \lam{#1}{s}\pnt \left[ \lam{#2}{e}\pnt {#2} \esc{#3} {#1} \right]\right]}}						
\newcommand{\vtrt}[4]{\ensuremath{\left[ \lam{#1}{s}\pnt \left[ \lam{#2}{e}\pnt \left[ \lam{#3}{e}\pnt #3 \esc{#4} {#2} \esc{in} {#1} \right]\right]\right]}} 	
\newcommand{\might}[1]{\ensuremath{\left[ \lam{#1}{s}\pnt \left[ \lam{\mdl{R}}{\bt{s,st}}\pnt \left[ \lam{q}{\bt{s,t}}\pnt \left[ \exists {#1}’\in\mdl[#1]{D}\pnt \left[ \funeqt{\mdl{R}}{#1}{#1’}{1}\esc{and}\funeq{q}{#1’}{1} \right]\right]\right]\right]\right]}} 
\title{Categorical Framework for Typed Extensional and Intensional Models in Formal Semantics}
\author{ \href{https://orcid.org/0009-0004-7957-1806}{\includegraphics[scale=0.06]{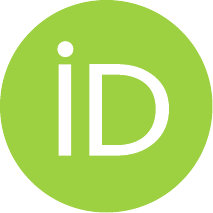}\hspace{1mm}Daniel Quigley} \\
	Department of Linguistics\\
	University of Wisconsin-Milwaukee\\
	Milwaukee, WI 53211 \\
	\texttt{quigleyd@uwm.edu} \\
}
\begin{document}
\maketitle

\begin{abstract}\label{abs:abstract}
	
	Intensional computation derives concrete outputs from abstract function definitions; extensional computation defines functions through explicit input-output pairs. In formal semantics: intensional computation interprets expressions as context-dependent functions; extensional computation evaluates expressions based on their denotations in an otherwise fixed context. This paper reformulates typed extensional and intensional models of formal semantics within a category-theoretic framework and demonstrates their natural representation therein. We construct \cat{ModInt}, the category of intensional models, building on the categories \cat{Set} of sets, \cat{Rel} of relations, and \cat{Kr} and \cat{Kr\textsubscript{b}} of Kripke frames with monotone maps and bounded morphisms, respectively. We prove that trivial intensional models are equivalent to extensional models, providing a unified categorical representation of intensionality and extensionality in formal semantics. This approach reinterprets the relationship between intensions and extensions in a categorical framework and offers a modular, order-independent method for processing intensions and recovering extensions; contextualizing the relationship between content and reference in category-theoretic terms. We discuss implications for natural language semantics and propose future directions for contextual integration and exploring \cat{ModInt}'s algebraic properties.
\end{abstract}

\keywords{formal semantics \and category theory \and extensional model \and intensional model \and Kripke frames \and mathematical linguistics}

\section{Introduction}\label{sec:intro}
Formal semantics is the study of how meaning in language is represented and processed via mathematical structures and logical models \cite{cop2024,Montague2002,Winter2016,Partee2016}; such models, in the style of at least \cite{vonFintelHeim1997,Heim1998HEISIG}, following from \cite{Montague1974,Gentzen1935}, are either extensional, in which the recovery of the truth value of a language fragment follows from its correspondence to observable states of affairs or factual conditions, or intensional, in which the recovery of the truth value of a language fragment follows from its dependence on some contextual information, incorporating notions of (but not limited to) necessity, possibility, and hypothetical scenarios. These models provide frameworks for which to understand how linguistic expressions map onto reality.

This paper recasts standard typed extensional and intensional models (for natural language semantics) in a category-theoretic framework \cite{baez1998categorification,CraneFrenkel1994,Crane1995}. We focus on intensional models, viewing them as collections of models that we update one to another via appropriate computational functions. Category theory is a natural framework for this, for its emphasis on structural relationships among objects and between categories. We demonstrate that typed intensional models have a natural formulation in this categorical approach.

Sections~\ref{sec:fs} and \ref{sec:cats} are exposition. Section~\ref{sec:fs} lays out in detail the formal semantics of typed extensional and intensional models as they are relevant in linguistics and logic. Expositions on extensional and intensional semantics may be found in standard references of formal semantics \cite{vonFintelHeim1997,Heim1998HEISIG,cop2024,Chierchia2000CHIMAG} and philosophical logic \cite{Sider2009SIDLFP,Gallin1975,Emerson1990,leivant1994higher,Hendriks2001}; here, however, we give a technical presentation of those models in a concise and consistent way that is (at least, to the best of the author's knowledge) otherwise absent in standard texts. In Section~\ref{sec:cats}, we give a brief tour of the usual aspects of category theory, with emphasis on the components relevant here, and follow the conventions from \cite{Leinster2014,Turi2001,Awodey2010}. Kripke semantics and its categorification are given in Section~\ref{sec:krips}, relative to the categories of sets and relations. Much of the material here follows from \cite{coecke2009categories,Kishida2017,Thomason1975}; here, emphasize the categorical structure of Kripke frames as they are imported into the category of intensional models, and prove various statements otherwise left alone in those sources. In Section~\ref{sec:intmod}, we use the foundations from the categories of sets, relations, and Kripke frames in the prior section to define \cat{ModInt}, the category of intensional models, and prove that a fully trivial intensional model (an object of the category) is equivalent to an extensional model, thereby including the extensions in the category, and completing a category-theoretic representation of processing intensions. A discussion of the consequences of this categorification is given in Section~\ref{sec:disc}, and Section~\ref{sec:concl} concludes with pontification on future work.

Some conventions used in this paper: a \term{Definition} is an explanation of a term or concept; a \term{Theorem} is a true statement that has not been stated elsewhere in the literature or yet proven to be true (at least, to the best of the author's knowledge); a \term{Lemma} is a true statement used in proving other true statements; \term{Proposition} is a \qte{less important} but nonetheless interesting true statement that may have been stated elsewhere in the literature but has not been proven therein (again, to the best of the author's knowledge); example environments are used for non-linguistic examples; left-hand-side numbered environments are used for standard linguistic data.

Finally, a note on typesetting: most mathematical expressions involving closed brackets (parentheses or any of the various other bracketings) are written with greater kerning; this is to accommodate individuals with vision accessibility needs for when parsing otherwise dense expressions. \cite{dotan2018,beier2021,mcleish2007,sjob2016,BlackmoreWright2013}.

\section{Formal semantics}\label{sec:fs}
Natural language is not logic, but language fragments\footnote{The \qte{method of fragments} specifies a complete syntax and semantics for a specified subset (hence, \qte{fragment}) of a natural language \cite{Montague1974}.} can be logical, \ie, represented according to some formalism. \term{Formal semantics} refers to approaches to the study of meaning in language as a formal system, and includes model-theoretic \cite{Montague1974,Partee2008} and proof-theoretic semantics \cite{Gentzen1935}; the qualifier \qte{formal} refers to simply the manipulation of symbols according to a set of rules.

\begin{definition}[Formal System]\label{def:formalsys}
	A \term{formal system} is a triple of the form\begin{align*}
		\mdl[\fs]{M} = \bt{{\fl,\models,\vdash_{\clc}}},
	\end{align*}
	where \fl{} is a formal language, defined as the set of all well-formed formulas constructed according to appropriate formation rules; \f{\models} is \term{model-theoretic entailment}, a binary relation on the set of formulas of \fl; \f{\vdash_{\clc}} is \term{proof-theoretic derivability} in a calculus \f{\clc}, a binary relation on the set of formulas of \fl.
\end{definition}

The formal system is \term{sound} when: if \f{\Lambda \vdash_{\clc} \sph} for a set of formulas under consideration\footnote{Which may be axioms, assumptions, premises, hypotheses.} \f{\Lambda} (derivability), then \f{\Lambda \models \sph} (semantic entailment); the formal system is \term{complete} when: if \f{\Lambda \models \sph} for a set of formulas under consideration \f{\Lambda}, then \f{\Lambda \vdash_{\clc} \sph} (derivability). When both soundness and completeness hold, the semantic and proof-theoretic notions of entailment coincide, and we have a complete formal logic.

The goal of formal semantics is to determine the meaning of natural language\footnote{\qte{Meaning} is taken to be the truth value of a language fragment; to know the meaning of a language fragment is to recover its truth value.} \nl. To apply the formal system, we transform the \nl{} fragment into a computationally tractable fragment, \ie, into a formal language \fl, and interpret them relative to a model. We denote by \f{\models_{\nl}} the truth conditions and natural consequence relations on natural language utterances; if the calculus \f{\clc} of the logical system is adequate (\ie, sound and complete), then it is a model of the linguistic entailment relation \f{\models_{\nl}} observed in natural language.

Model theory is a branch of mathematical logic in which the relationship between a formal language and its interpretations are captured in a structure \mdl{M} called a \term{model} \cite{chang1990model,mendelson1997introduction,fagin2004reasoning}, which (minimally) contains the relevant elements and a method by which to interpret them. Classical model theory is the model theory of first-order (predicate) logic, from which are constructed the extensional and intensional semantics for natural language \cite{parteemathematical1990,Heim1998HEISIG}. Mainstream model theory is the model theory for either formal or natural language by invoking set-theoretic structures with Tarskian truth values \cite{Tarski1931TARTCO} and Montague grammar \cite{Montague1970b,Montague1974,Montague1970,Dowty1981}.

The \dft calculus for formal semantics is a simply-typed lambda calculus \cite{Dowty1981,Heim1998HEISIG,Montague2002}; that is, the lambda calculus of \cite{Church1932} equipped with labels on the domain \cite{Church1940}. These labels (\term{types}) give restrictions on the components that may be composed together \cite{Alma2013,Heim1998HEISIG}, formulated as an answer to a variant of Russell's paradox called the Rosser-Kleene paradox \cite{KleeneRosser1935,Curry1946}. The simply typed lambda calculus of Heim-Kratzer style semantics \cite{Heim1998HEISIG,vonFintelHeim1997} is the formal system used here when discussing \qte{formal semantics}\footnote{One assumes also that there is a computational system of syntax that produces, as input for semantic interpretation, some tree structures; we assume a context free grammar \cite{parteemathematical1990,schubert1982english,zafar2012formal,Heim1998HEISIG}.}.

\begin{enumerate}
	\item Given a natural language fragment \f{nl \in \nl}, the translation \f{T: \nl \rightarrow \fl} is such that, for any natural language fragment \f{nl \in \nl}, \f{\funeq{T}{nl}{\sph}}, where \f{\sph \in \fl}.
	
	\item For any well-formed formula and a model \mdl{M}, \den[\mdl{M}]{\f{\sph}} assigns a semantic value to \f{\sph} in \mdl{M}.
	\begin{enumerate}
		\item \sph{} is \term{true} in the model \mdl{M} if the interpretation function \mdl{I} assigns the truth value \f{1} to \sph{} under the model \mdl{M}\begin{align*}
			\mdl{M} \models \sph \wennif \funeq{\mdl{I}}{\sph}{1}.
		\end{align*}
		
		\item \sph{} \term{entails} another \sps{} if for every model in which \sph{} is true, then \sps{} is also true \mdl{M}\begin{align*}
			\sph \models \sps \wennif \forall \mdl{M} \siv  \mdl{M} \models \sph\com \tun \mdl{M} \models \sps.
		\end{align*}	
	\end{enumerate}
	
	\item Choose a calculus \clc{} for the formal language \fl:
	\begin{enumerate}
		\item \clc{} is \term{sound} if, for a formula \sph{} derived from a set of axioms \f{\Lambda} using the rules of \clc{}, \sph{} is true in \mdl{M} whenever \f{\Lambda} is true in \mdl{M}\begin{align*}
			\forall\Lambda,\sph\com\siv\Lambda\vdash_{\clc} \sph\et\mdl{M}\models\Lambda\com\tun \mdl{M}\models\sph.
		\end{align*}
		
		\item \clc{} is \term{complete} if, whenever a formula \sph{} is a logical consequence of a set of axioms \f{\Lambda} (\ie, \f{\Lambda\models\sph}), then there exists a formal derivation of \f{\sph} from \f{\Lambda} in the calculus \clc{} (\ie, \f{\Lambda \vdash_\clc \sph})\begin{align*}
			\forall\Lambda\com\siv\Lambda\models\sph\com\tun\Lambda\vdash_{\clc}\sph.
		\end{align*}
	\end{enumerate}
	\item If the calculus \clc{} is both sound and complete, then we have a valid formal system.
	
	\item If our goal is to characterize the truth conditions and entailment relations of natural language fragments, then:
	\begin{enumerate}
		\item If the translation function \f{T} preserves semantic structure (\ie, the truth conditions and entailments in \nl{} correspond to those of the translated formulas in \fl), 
		
		\begin{align*}
			\forall \nl_1,\nl_2\in\nl\com\nl_1\models_\nl \nl_2 \wennif \fun{T}{\nl_1}\models\fun{T}{\nl_2},
		\end{align*}

		and if the calculus \clc{} adequately captures the \nl{} entailment relation \f{\models_\nl}, 
		
		\begin{align*}
			\forall \nl_1,\nl_2\in\nl\com\nl_1\models_\nl \nl_2 \wennif \fun{T}{\nl_1}\vdash_\clc \fun{T}{\nl_2},
		\end{align*}
		
	\end{enumerate}
	then the formal system \fs{} models the semantics of the relevant \nl{} fragment.
	
	\item If the formal system \fs{} models the semantics of the relevant \nl{}  fragment, then we have a valid model of natural language.

\end{enumerate}

Mathematically, functions are either: (1) graphs; (2) formulae. The functions-as-graphs is the extensional interpretation of a function; the functions-as-formulae is the intensional interpretation of a function. Models for natural language semantics are, in turn, extensional or intensional, sensitive directly to elements of a set or to functions that point to elements of a set. Here, both models are decorated with types, restricting the rules of syntactic composition such that grammatical constructions follow appropriately.

\subsection{Extensional model of semantics}\label{subsec:ext}

Consider first the \qte{functions-as-graphs} correspondence. In this way, functions are subsets of their domain-codomain product space; that is, a function from a domain \f{\dom{X}} to a codomain \f{\ran{Y}} is given by \f{f:X\rightarrow Y}, and is a subset of the product space \f{f\subset X\times Y}. Two functions \f{f:X\rightarrow Y} and \f{g:X\rightarrow Y} are identical if they map equal inputs to equal outputs. This defines the \term{extensional} interpretation of a function: a function is \emph{fully determined} by what are its input-output relationships \cite{Selinger2013}. Extensional semantics, then, is concerned with all elements in the set, in which the extension of an expression is the set of all elements that the expression refers to or the truth value to which it evaluates.

\begin{definition}[Extensional Types]\label{def:exttyp}
	\f{e} and \f{t} are \term{extensional types}. \f{e} is the type of \term{individuals}; all individuals have type \f{e}. \f{t} is the type of \term{truth values}; the truth values \bc{0,1} have type \f{t}.
	
	\begin{itemize}
		\item If \f{\alpha} is a type and \f{\beta} is a type, then \f{\alpha\times\beta} is the type of an ordered pair whose first element is of type \f{\alpha} and whose second element is of type \f{\beta}.
		\item If \f{\alpha} is a type, then \bp{\alpha} is the type of sets containing elements of type \f{\alpha}.
		\item If \f{\alpha_1, \alpha_2,\dots,\alpha_n} are types, then \bp{\alpha_1, \alpha_2,\dots,\alpha_n} is the type of a relation consisting of ordered pairs of type \f{\alpha_1\times \alpha_2\times\dots\times\alpha_n}.
		\item If \f{\alpha} is a type and \f{\beta} is a type, then \bt{\alpha,\beta} is the semantic type of functions whose domain is the set of entities of type \f{\alpha} and whose range is the set of entities of type \f{\beta}.
		\item Nothing else is a type.
	\end{itemize}
\end{definition}

\begin{definition}[Extensional Model]\label{def:extmod}
	Let \f{\mathcal{T}} be a set of types. An \term{extensional model} \mdl[x]{M} is a tuple of the form:\begin{align*}
		\mdl[x]{M} = \bt{\bp[{\tau\in\mathcal{T}}]{\mdl[\tau]{D}},\mdl{I}},
	\end{align*}
	where:
	\begin{itemize}
		\item For each type \f{\tau\in\mathcal{T}}, the model determines a corresponding domain \mdl[{\tau}]{D}. The \term{standard domain} is an indexed family of sets \bp[{\tau\in\mathcal{T}}]{\mdl[\tau]{D}} for all types \f{\tau\in\mathcal{T}}:
		\begin{itemize}
			\item \f{\mdl[e]{D}=\mdl{D}} is the set of entities.
			\item \f{\mdl[t]{D}=\f{\bc{0,1}}} is the set of truth-values.
		\end{itemize}
		\item For each type \f{\tau\in\mathcal{T}} and each non-logical constant \f{c_\tau} of type \f{\tau} in the formal language \fl, the extensional \term{interpretation function} \mdl{I} assigns an element from the corresponding domain \mdl[\tau]{D}, such that \f{\fun{\mdl{I}}{c_\tau}\in\mdl[\tau]{D}}. \f{\gsn: V\rightarrow \mdl[e]{D}} is an \term{assignment function}\footnote{\textnormal{The semantic value of an expression is \emph{relativized} to a model and an assignment; the same expression can have different semantic values depending on the specific model and assignment we consider, hence it is not an element of the model tuple. Interestingly, \cite{parteemathematical1990} does include \gsn{} in the model tuple (as well as integer indices \f{i} and \den{.} explicitly); the treatment here follows the conventions from \cite{Sider2009SIDLFP,Chierchia2000CHIMAG,kohl2024,cop2024,Gallin1975}, in which \gsn{} is not an element of the model tuple.}} that maps each individual variable \f{x\in V} to some element in \mdl[e]{D} for any variable \f{x\in V} and entity \f{k\in\mdl[e]{D}}, the variant \gsnt{x\mapsto k} denotes the \term{variant} of \gsn{} that assigns \f{k} to \f{x}, regardless of what \gsn{} originally assigned to \f{x}.
		\begin{enumerate}
			\item If \sph{} is a constant \f{\sph \in \fl\com \funeq{\mdl{I}}{\sph}{k} \in\mdl[e]{D}}.
			
			\item If \sph{} is a variable \f{\sph \in V\com \funeq{\gsn}{\sph}{k} \in \mdl[e]{D}}.
			
			\item For any \f{n}-ary predicate \f{P \in\fl},\space \f{\fun{\mdl{I}}{P}\subseteq\mdl[e]{D}^n} (\fun{\mdl{I}}{P} is the extension of \f{P} a subset of the \f{n}-th Cartesian power of \mdl[e]{D}).
			
			\item For any \f{n}-ary function \f{f:\mdl[e]{D}^n\rightarrow\mdl[e]{D}},\space \f{\fun{\mdl{I}}{f}: \mdl[e]{D}^n\rightarrow\mdl[e]{D}}.
		\end{enumerate}
		
		An extensional \term{denotation function}\textnormal{\footnote{Superscripts in the double square brackets (\qte{semantic brackets}) specifies the interpretation function it is based on (\ie, extensional or intensional, \sans or \avec an assignment function, \etc).}} \den[\mdl{M},\gsn]{.} assigns to every expression \sph{} of the language \fl{} a semantic value \den[\mdl{M},\gsn]{\sph} by recursively building up basic interpretation functions \fun{\mdl{I}}{.} as \fun{\mdl{I}}{\sph}.

		\begin{enumerate}
			\item If \sph{} is a constant \f{\sph \in \fl\com \den[\mdl{M},\gsn]{\sph} = \fun{\mdl{I}}{\sph}}.
			
			\item If \sph{} is a variable in a set of variables \f{\sph \in V\com \den[\mdl{M},\gsn]{\sph} = \fun{\gsn}{\sph}}.
			
			\item For any \f{n}-ary predicate \f{P \in\fl} and sequence of terms \f{\sph_1,\sph_2,\dots,\sph_n}, we have that  \f{\den[\mdl{M},\gsn]{\fun{P}{\sph_1,\sph_2\dots,\sph_n}}=1\wenn}\begin{align*}
				\bt{\den[\mdl{M},\gsn]{\f{\sph_1}},\dots,\den[\mdl{M},\gsn]{\f{\sph_n}}}\in\den[\mdl{M},\gsn]{\f{P}}=\fun{\mdl{I}}{P}.
			\end{align*}
			
			\item For any \f{n}-ary function \f{f \in\fl} and sequence of terms \f{\sph_1,\sph_2,\dots,\sph_n}, we have that\begin{align*}
				\den[\mdl{M},\gsn]{\fun{f}{\sph_1,\sph_2,\dots,\sph_n}} & = \fun{\den[\mdl{M},\gsn]{\f{f}}}{\den[\mdl{M},\gsn]{\f{\sph_1}},\dots,\den[\mdl{M},\gsn]{\f{\sph_n}}} \\
				& =\fun{\fun{\mdl{I}}{f}}{\den[\mdl{M},\gsn]{\f{\sph_1}},\dots,\den[\mdl{M},\gsn]{\f{\sph_n}}}.
			\end{align*}
			
		\end{enumerate}
		
	\end{itemize}
\end{definition}

An example of extensional semantics is given by the sentence in \ref{exe:ex1}.

\begin{exe}
	\ex\label{exe:ex1} \ol{The student read the book.}
\end{exe}

The grammar and interpretation (read, lexicon) of \ref{exe:ex1} are given in Table~\ref{tab:extgr}, and the composition and recovery of the truth value are given in \ref{exe:exttree}.

\begin{table}[H]
	\centering
	\caption{Extensional grammar and interpretation}
	\label{tab:extgr}
	\begin{tabular}{@{}ll@{}}
		\toprule
		Grammar	& Interpretation \\ \midrule
		S $\rightarrow$ DP VP	& \den[\mdl{M},\gsn]{VP}(\den[\mdl{M},\gsn]{DP}) \\
		DP $\rightarrow$ D NP	& \den[\mdl{M},\gsn]{D}(\den[\mdl{M},\gsn]{NP}) \\
		D $\rightarrow$ the		& \den[\mdl{M},\gsn]{the}=\defdet{x}{y} \\
		NP $\rightarrow$ N					& \den[\mdl{M},\gsn]{N} \\  					
		N $\rightarrow$ student	& \den[\mdl{M},\gsn]{student}=\nn{x}{is a student} \\
		N $\rightarrow$ book	& \den[\mdl{M},\gsn]{book}=\nn{x}{is a book} \\
		VP $\rightarrow$ V DP				& \den[\mdl{M},\gsn]{V}(\den[\mdl{M},\gsn]{DP}) \\
		V $\rightarrow$ read	& \den[\mdl{M},\gsn]{read}=\vtr{x}{y}{read} \\
		\bottomrule
	\end{tabular}
\end{table}

\begin{exe}
	\ex\label{exe:exttree} \tikzset{level distance=25mm, sibling distance=5mm} 
	
	\f{\Tree [.\den[\mdl{M},\gsn]{read}(\den[\mdl{M},\gsn]{the}(\den[\mdl{M},\gsn]{book})(\den[\mdl{M},\gsn]{the}(\den[\mdl{M},\gsn]{student}))) [.\den[\mdl{M},\gsn]{the}(\den[\mdl{M},\gsn]{student}) [.\den[\mdl{M},\gsn]{the} ] [.\den[\mdl{M},\gsn]{student} ]] [.\den[\mdl{M},\gsn]{read}(\den[\mdl{M},\gsn]{the}(\den[\mdl{M},\gsn]{book})) [.\den[\mdl{M},\gsn]{read} ] [.\den[\mdl{M},\gsn]{the}(\den[\mdl{M},\gsn]{book}) [.\den[\mdl{M},\gsn]{the} ] [.\den[\mdl{M},\gsn]{book} ]] ]]}\\
	\f{=\sentence{the unique \f{x} such that \f{x} is a student read the unique \f{y} such that \f{y} is a book}}
\end{exe}

\subsection{Intensional model of semantics}\label{subsec:int}

Where the extensional interpretation of a function is concerned only with the input-output, the \term{intensional}\footnote{Note that it is intentional that this word is spelled \orth{intensional}.} interpretation of a function is concerned with \emph{how} those functions are computed; two functions \f{f:X\rightarrow Y} and \f{g:X\rightarrow Y} with identical input-output mappings may differ intensionally, \ie, they have different algorithms or formulae\footnote{The ur-example is from computer science, in which functions are formulae: some sorting algorithm \f{A} and another \f{B} can have \emph{very} different algorithm designs with \emph{very} different complexities, regardless if they have the same sorted list output when give the same input.}. Functions \f{f:X\rightarrow Y} and \f{g:X\rightarrow Y} are considered identical only if they are defined by essentially the same formula \cite{Selinger2013}. Intensional semantics is concerned with parametrizations pointing to elements of a set (that is, there is an entire additional functional step before accessing some element of a set); the intension of an expression is a function that takes a parameter as input and returns the extension of the expression. 

\begin{definition}[Intensional Types]\label{def:inttyp}
	\f{e}, \f{t}, and \f{s} are \term{intensional types}. \f{e} is the type of \term{individuals}; all individuals have type \f{e}. \f{t} is the type of \term{truth values}; the truth values \bc{0,1} have type \f{t}; \f{s} is a type of \term{index}; indices may be any of intensional domains (worlds, times, world-time pairs, \etc, which may also be specified for types themselves). 
	
	\begin{itemize}
		\item If \f{\alpha} is a type and \f{\beta} is a type, then \f{\alpha\times\beta} is the type of an ordered pair whose first element is of type \f{\alpha} and whose second element is of type \f{\beta}.
		\item If \f{\alpha} is a type, then \bp{\f{\alpha}} is the type of sets containing elements of type \f{\alpha}.
		\item If \f{\alpha_1, \alpha_2,\dots,\alpha_n} are types, then \bp{\alpha_1, \alpha_2,\dots,\alpha_n} is the type of a relation consisting of ordered pairs of type \f{\alpha_1\times \alpha_2\times\dots\times\alpha_n}.
		\item If \f{\alpha} is a type and \f{\beta} is a type, then \bt{\alpha,\beta} is the semantic type of functions whose domain is the set of entities of type \f{\alpha} and whose range is the set of entities of type \f{\beta}.
		\item Nothing else is a type.
	\end{itemize}
\end{definition}

\begin{definition}[Intensional Model]\label{def:intenmod}
	Let \f{\mathcal{T}} be a set of types, and \f{\mathcal{K} = \mathcal{T}\setminus\bc{e,t}} be a set of types \sans entities and truth values. An \term{intensional model} \mdl[i]{M} is a triple of the form:\begin{align*}
		\mdl[i]{M} = \bt{\bp[\tau\in\mathcal{T}\setminus\mathcal{K}]{\mdl[\tau]{D}},\bp[\tau\in\mathcal{K}]{\mdl[\tau]{F}},\mdl{I}},
	\end{align*}
	where:
	\begin{itemize}
		\item For each type \f{\tau\in\mathcal{T}\setminus\mathcal{K}}, the model determines a corresponding domain \mdl[\tau]{M}. The \term{standard domain} is an indexed family of sets \bp[\tau\in\mathcal{T}\setminus\mathcal{K}]{\mdl[\tau]{D}}.
		\begin{itemize}
			\item \f{\mdl[e]{D}=\mdl{D}} is the set of entities.
			\item \f{\mdl[t]{D}=\bc{0,1}} is the set of truth-values.
		\end{itemize}
		\item For each type \f{\tau\in\mathcal{K}}, the model determines a corresponding domain frame \mdl[\tau]{D}. The \term{Kripke frame} is an indexed family of tuples whose elements are a set and a relation on that set \bp[\tau\in\mathcal{K}]{\mdl[\tau]{F}}.
		\begin{itemize}
			\item \bp[\tau\in\mathcal{K}]{\mdl[\tau]{D}} is an indexed family of non-empty (inhabited) sets of elements, where:
			\begin{itemize}
				\item \f{\mdl[W]{D} = W} is a set of worlds.
				\item \f{\mdl[T]{D} = T} is a set of times.
				\item \dots
			\end{itemize}
			\item \bp[\tau\in\mathcal{K}]{\mdl[\tau]{R}} is an indexed family of binary accessibility or transition relations \f{\mdl{R}:\bp[\tau\in\mathcal{K}]{\mdl[\tau]{D}}\times\bp[\tau\in\mathcal{K}]{\mdl[\tau]{D}}}, where for some pair \f{\bp{a,b}\in \mdl{R}}, where \f{a\in A \et b\in B}, the relation is\textnormal{\footnote{Notations for this relation may include: \f{\bp{a,b}\in \mdl{R}}; \f{\mdl{R} ab}; \f{a\mdl{R} b}; \f{\fun{\mdl{R}}{ab}}.}}:
			\begin{itemize}
				\item \term{serial} in \f{A} if and only if: \f{\forall u\in A\com \exists v\in A\st u\mdl{R}v}.
				\item \term{reflexive} in \f{A} if and only if: \f{\forall u\in A\com u\mdl{R}u}.
				\item \term{symmetric} in \f{A} if and only if: \f{\forall u,v\in A\com \ifth{u\mdl{R}v}{v\mdl{R}u}}.
				\item \term{antisymmetric} in \f{A} if and only if: \f{\forall u,v\in A\com \ifth{u\mdl{R}v\et u\neq v}{v\cancel{\mdl{R}}u}}; equivalently: \f{\forall u,v\in A\com \ifth{u\mdl{R}v\et v\mdl{R}u}{u=v}}.
				\item \term{transitive} in \f{A} if and only if: \f{\forall u,v,w\in A\com \ifth{u\mdl{R}v\et v\mdl{R} w}{u\mdl{R}w}}.
				\item \term{total} in \f{A} if and only if: \f{\forall u,v\in A\com u\mdl{R}v}.
			\end{itemize}
			
			A relation \mdl{R} has an \term{equivalence relation} in \f{A} if and only if \mdl{R} is: (1) symmetric; (2) transitive; (3) reflexive. A relation \mdl{R} has a \term{partial order relation} in \f{A} if and only if \mdl{R} is: (1) antisymmetric; (2) transitive; (3) reflexive. A relation \mdl{R} has a \term{total order relation} in \f{A} if and only if \mdl{R} is a partial order and is total. 
			
			\begin{itemize}
				\item A relation \mdl{R} is \term{strongly connected} in \f{A} if and only if: \f{\forall u,v\in A}, either \f{u\mdl{R}v\vel v\mdl{R}u}.
				\item A relation \mdl{R} is \term{weakly connected} in \f{A} if and only if: \f{\forall u,v,v'\in A}, if either \f{u\mdl{R}v\et u\mdl{R}v'} or \f{v\mdl{R}u\et v'\mdl{R}u}, then either \f{v\mdl{R}v'\vel v'\mdl{R}v}.
				
			\end{itemize}
		\end{itemize}
		
		\item For each type \f{\tau\in\mathcal{T}} and each non-logical constant \f{c_\tau} of type \f{\tau} in the formal language \fl, the intensional \term{interpretation function} \mdl{I} assigns an intension, which is a function from indices \f{s} to elements of the corresponding domain \mdl[e]{D}, such that \f{\fun{\mdl{I}}{c_\tau}:\prod_{\tau\in\mathcal{K}} \mdl[\tau]{D}\rightarrow \bp[\tau\in\mathcal{T}\setminus\mathcal{K}]{\mdl[\tau]{D}}}, where \f{\prod_{\tau\in\mathcal{K}} \mdl[\tau]{D}} is the Cartesian product of the domains in \bp[\tau\in\mathcal{T}\setminus\mathcal{K}]{\mdl[\tau]{D}}, the set of all possible indices. \f{\gsn: V\rightarrow \mdl[e]{D}} is an \term{assingment function} that maps each individual variable to some element in \mdl[e]{D}, and the variant \gsnt{x\mapsto k} for some variable \f{x\in V} and entity \f{k\in\mdl[e]{D}} regardless of what \gsn{} originally assigned to \f{x}.
		
		\begin{enumerate}
			\item If \sph{} is a constant \f{\sph \in \fl\com \fun{\mdl{I}}{\sph}:\prod_{\tau\in\mathcal{K}} \mdl[\tau]{D}\rightarrow\mdl[e]{D}}, where \f{\funeqt{\mdl{I}}{\sph}{s}{k}\in\mdl[e]{D}} for all indices \f{s\in \prod_{\tau\in\mathcal{K}} \mdl[e]{D}}.

			\item If \sph{} is a variable \f{\sph \in V\com \funeq{\gsn}{\sph}{k} \in \mdl[e]{D}}.
			
			\item For any \f{n}-ary predicate \f{P \in\fl},\space \f{\fun{\mdl{I}}{P}:\prod_{\tau\in\mathcal{K}} \mdl[\tau]{D}\rightarrow\fun{P}{\mdl[e]{D}^n}}, where \f{\funt{P}{\sph}{s}\subseteq\mdl[e]{D}^n} for all indices \f{s\in \prod_{\tau\in\mathcal{K}} \mdl[\tau]{D}} (\fun{\mdl{I}}{P} assigns to each index \f{s} a subset of the \f{n}-th Cartesian power of \mdl[e]{D}).
			
			\item For any \f{n}-ary function \f{f:\mdl[e]{D}^n\rightarrow\mdl[e]{D}},\space \f{\fun{\mdl{I}}{f}:\prod_{\tau\in\mathcal{K}} \mdl[\tau]{D}\rightarrow\bp{\mdl[e]{D}^n\rightarrow\mdl[e]{D}}}, where \f{\funt{f}{\sph}{s}:\mdl[e]{D}^n\rightarrow\mdl[e]{D}} for all indices \f{s\in \prod_{\tau\in\mathcal{K}} \mdl[\tau]{D}}.

		\end{enumerate}
		
		An intensional \term{denotation function} \den[\mdl{M},\gsn,s]{.} assigns to every expression \sph{} of the language \fl{} a semantic value \den[\mdl{M},\gsn,s]{\sph} by recursively building up basic interpretation functions \funt{\mdl{I}}{.}{s} as \funt{\mdl{I}}{\sph}{s}. For all indices \f{s\in \prod_{\tau\in\mathcal{K}} \mdl[\tau]{D}}:
		
		\begin{enumerate}
			\item if \sph{} is a constant \f{\sph \in \fl\com \den[\mdl{M},\gsn,s]{\sph} = \funt{\mdl{I}}{\sph}{s}}.
			
			\item if \sph{} is a variable in a set of variables \f{\sph \in V\com \den[\mdl{M},\gsn,s]{\sph} = \fun{\gsn}{\sph}}.
			
			\item for any \f{n}-ary predicate \f{P \in\fl} and sequence of terms \f{\sph_1,\sph_2,\dots,\sph_n}, we have that \f{\den[\mdl{M},\gsn,s]{\fun{P}{\sph_1,\sph_2,\dots,\sph_n}}=1\wenn}\begin{align*}
				\bt{\den[\mdl{M},\gsn,s]{\f{\sph_1}},\dots,\den[\mdl{M},\gsn,s]{\f{\sph_n}}}\in\den[\mdl{M},\gsn,s]{\f{P}}=\funt{\mdl{I}}{P}{s}.
			\end{align*}						
			
			\item for any \f{n}-ary function \f{f \in\fl} and sequence of terms \f{\sph_1,\sph_2,\dots,\sph_n}, we have that\begin{align*}
				\den[\mdl{M},\gsn,s]{\fun{f}{\sph_1,\dots,\sph_n}} & = \fun{\den[\mdl{M},\gsn,s]{\f{f}}}{\den[\mdl{M},\gsn,s]{\f{\sph_1}},\dots,\den[\mdl{M},\gsn,s]{\f{\sph_n}}} \\
				& =\fun{\funt{\mdl{I}}{f}{s}}{\den[\mdl{M},\gsn,s]{\f{\sph_1}},\dots,\den[\mdl{M},\gsn,s]{\f{\sph_n}}}.
			\end{align*} 
			
		\end{enumerate}
		
	\end{itemize}
\end{definition}

The ur-example of an intensional model is modal logic; an example is given by the sentence in \ref{exe:ex2}.

\begin{exe}
	\ex\label{exe:ex2} \ol{The student might read the book.}
	
\end{exe}

The grammar and interpretation (read, lexicon) of the sentence in \ref{exe:ex2} are given in Table~\ref{tab:inttree}, and the composition and recovery of the truth value are given in \ref{exe:inttree}.

\begin{table}[H]
	\centering
	\caption{Intensional (modal) grammar and interpretation}
	\label{tab:inttree}
	\begin{adjustbox}{width = 1.0\textwidth}
		\begin{tabular}{@{}ll@{}}
			\toprule
			Grammar	& Interpretation \\ \midrule
			S $\rightarrow$ DP VP	& \den[\mdl{M},\gsn,w]{VP}(\den[\mdl{M},\gsn,w]{DP}) \\
			DP $\rightarrow$ D NP	& \den[\mdl{M},\gsn,w]{D}(\den[\mdl{M},\gsn,w]{NP}) \\
			D $\rightarrow$ the		& \den[\mdl{M},\gsn,w]{the}=\defdet{x}{y} \\
			NP $\rightarrow$ N					& \den[\mdl{M},\gsn,w]{N} \\
			N $\rightarrow$ student	& \den[\mdl{M},\gsn,w]{student}=\nnt{w}{x}{is a student in} \\
			N $\rightarrow$ book	& \den[\mdl{M},\gsn,w]{book}=\nnt{w}{x}{is a book in} \\
			VP $\rightarrow$ Mod V'				& \den[\mdl{M},\gsn,w]{Mod}(\den[\mdl{M},\gsn,w]{VP}) \\
			
			Mod $\rightarrow$ might				& \f{\den[\mdl{M},\gsn,w]{might} = \might{w}}\\
			
			V' $\rightarrow$ V DP				& \den[\mdl{M},\gsn,w]{V}(\den[\mdl{M},\gsn,w]{DP}) \\
			V $\rightarrow$ read	& \den[\mdl{M},\gsn,w]{read}=\vtrt{w}{x}{y}{read} \\
			\bottomrule
		\end{tabular}
	\end{adjustbox}
\end{table}

\begin{exe}
	\ex\label{exe:inttree} \tikzset{level distance=25mm, sibling distance=5mm} 
	
	\scriptsize{
		\f{\Tree [.\den[\mdl{M},\gsn,w]{might}(\den[\mdl{M},\gsn,w]{read}(\den[\mdl{M},\gsn,w]{the}(\den[\mdl{M},\gsn,w]{book})(\den[\mdl{M},\gsn,w]{the}(\den[\mdl{M},\gsn,w]{student})))) [.\den[\mdl{M},\gsn,w]{the}(\den[\mdl{M},\gsn,w]{student}) [.\den[\mdl{M},\gsn,w]{the} ] [.\den[\mdl{M},\gsn,w]{student} ]] [.\den[\mdl{M},\gsn,w]{might}(\den[\mdl{M},\gsn,w]{read}(\den[\mdl{M},\gsn,w]{the}(\den[\mdl{M},\gsn,w]{book}))) [.\den[\mdl{M},\gsn,w]{might} ] [.\den[\mdl{M},\gsn,w]{read}(\den[\mdl{M},\gsn,w]{the}(\den[\mdl{M},\gsn,w]{book})) [.\den[\mdl{M},\gsn,w]{read} ] [.\den[\mdl{M},\gsn,w]{the}(\den[\mdl{M},\gsn,w]{book}) [.\den[\mdl{M},\gsn,w]{the} ] [.\den[\mdl{M},\gsn,w]{book} ]] ]]]}\\}
	\f{=\sentence{there exists a possible world \f{w'} accessible from \f{w} where the unique \f{x} such that \f{x} is a student in \f{w'} read the unique \f{y} such that \f{y} is a book in \f{w'}}}
\end{exe}

\section{Category theory}\label{sec:cats}
The three main components of category theory (CT) include: (1) category; (2) functor; (3) natural transformation. Working definitions of each are given in Definition~\ref{def:cat}, Definition~\ref{def:funct}, and Definition~\ref{def:nattrans}, respectively. Expositional material in this section is primarily adapted from \cite{Leinster2014,Turi2001,Awodey2010}, and includes additional material from \cite{fonginvitation2019,spivakcategory2014}. The reader familiar with CT may skip this section without loss of context in the remainder of the paper.

To begin: the category is \emph{the} fundamental construction in and namesake of category theory. The main ingredients of a category include: objects, \qte{structure preserving} morphisms between objects, associativity, and an identity morphism of an object unto itself.

\begin{definition}[Category]\label{def:cat}
	A \term{category} \catg{C} is a quadruple of the form:\begin{align*}
		\mdl{\catg{C}} = \bt{\fun{\ob}{\catg{C}},\fun{\catg{C}}{X,Y},\circ_\catg{C},\id{\catg{C}}},
	\end{align*}
	
	where \fun{\ob}{\catg{C}} is a class of \term{objects} of \catg{C}, class \fun{\catg{C}}{X,Y} is the \term{homset of morphisms} from \f{X} to \f{Y} for all \f{X,Y\in\fun{\ob}{\catg{C}}}, \f{\circ_\catg{C}} is the associative \term{composition function} \f{\circ_\catg{C}:\fun{\catg{C}}{X,Y}\times \fun{\catg{C}}{Y,Z}\rightarrow \fun{\catg{C}}{X,Z}\com \bp{f,g}\mapsto g\circ f} for all \f{X,Y,Z\in\fun{\ob}{\catg{C}}}, and the \term{identity} on \f{X} for all \f{X\in \fun{\ob}{\catg{C}}} where \f{\id{\catg{C}}\in\fun{\catg{C}}{X,X}}.
	
\end{definition}

Ubiquitous in CT is the \term{commutative diagram}; \ref{fig:schemcat} gives a prototypical representation of a category, in which the resulting structure is a directed graph, where for any two objects in the diagram, all composite morphisms between them are equivalent. 

\begin{figure}[H]
	\centering
	\[\begin{tikzcd}
		& \catg{C} \\
			X && Y \\
			\\
			&& Z
			\arrow["\id{X}", from=2-1, to=2-1, loop, in=55, out=125, distance=10mm]
			\arrow["f", from=2-1, to=2-3]
			\arrow["g\circ f"', from=2-1, to=4-3]
			\arrow["\id{Y}", from=2-3, to=2-3, loop, in=55, out=125, distance=10mm]
			\arrow["g", from=2-3, to=4-3]
			\arrow["\id{Z}"', from=4-3, to=4-3, loop, in=305, out=235, distance=10mm]
		\end{tikzcd}\]
	\caption{A simple category \catg{C}: objects \f{X}, \f{Y}, \f{Z}; morphisms \f{f}, \f{g}, \f{g\circ f}; identities \id{X}, \id{Y}, \id{Z}}
	\label{fig:schemcat}
\end{figure}

It is convention to not include the identity arrows in commutative diagrams, and so imply their inclusion; this is not to understate their importance, however. The identity morphism ensures that every object has a neutral element with respect to composition, which allows for the formulation of other properties and structures (like products, coproducts, and functorial behavior) in a way that generalizes nicely across different kinds of structures. The two properties of identity morphisms show that \id{X} truly does nothing to \f{f} when used as the first step in a composition, and \id{Y} does nothing to \f{f} when used as the last step in a composition.

\begin{enumerate}
	\item For any morphism \f{ f: X \rightarrow Y }, the composition of \f{f} with the identity morphism \id{X} on \f{X} (\ie, \f{ f \circ \id{X}}) must equal \f{f}.
	\item For any morphism \f{ f: X \rightarrow Y }, the composition of the identity morphism \id{Y} on \f{Y} with \f{f} (\ie, \f{\id{Y} \circ f}) must also equal \f{f}.
\end{enumerate}

Examples of categories particularly relevant here: the classic category of sets \cat{Set} \cite{fonginvitation2019,spivakcategory2014}, dually equivalent to the category of complete, atomic boolean algebras \cite{dimov2020categorical}; the category of relations \cat{Rel}.

\begin{example}[Category of sets \cat{Set}]\label{exe:set}
	\cat{Set} is a category whose objects are sets, and morphisms are functions between them.
\end{example}

\begin{example}[Category of sets \cat{Rel}]\label{exe:rel}
	\cat{Rel} is a category whose objects are sets, and morphisms are relations between them.
\end{example}

A morphism relates objects \emph{within} a category; a functor relates \emph{a category with another category}. We qualitatively interpret functors as a kind of \qte{change in context} or \qte{change in perspective}; that is, applying a functor to a category essentially asks us to consider a different property of an object and its relations to other objects.

\begin{definition}[Functor]\label{def:funct}
	
	Let \catg{C} and \catg{D} be categories. \f{F:\catg{C} \rightarrow \catg{D}} is a \term{functor} between \catg{C} and \catg{D} if \f{F} maps each:
	
	\begin{itemize}
		\item object in \catg{C} to a corresponding object in \catg{D}: \f{F:\fun{\ob}{\catg{C}}\rightarrow\fun{\ob}{\catg{D}}}, \f{X\mapsto\fun{F}{X}}.
		\item morphism in \catg{C} to a corresponding morphism in \catg{D}: for all \f{X,Y\in\catg{C}}, \f{F:\fun{\catg{C}}{X,Y}\rightarrow\fun{\catg{D}}{\fun{F}{X},\fun{F}{Y}}}, \f{f\mapsto\fun{F}{f}}.
	\end{itemize}
	
	The following conditions hold for \f{F}:
	
	\begin{itemize}
		\item for any object \f{X\in\catg{C}\com\funeq{F}{\id{X}}{\id{\fun{F}{X}}}}.
		\item let \f{f\in\fun{\catg{C}}{X,Y}} and \f{g\in\fun{\catg{C}}{Y,Z}} be composable morphisms in \catg{C}, then \f{\fun{F}{g\circ f} = \fun{F}{g}\circ\fun{F}{f}}.
	\end{itemize}
	
\end{definition}

\f{F} preserves commuting diagrams. An example of a commutative diagram illustrating the application of a functor from category \catg{C} to category \catg{D} is given in \ref{fig:schemfunt}.

\begin{figure}[H]
	\centering
	\[\begin{tikzcd}
			& \catg{C} &&&& \catg{D} \\
			X && Y && \fun{F}{X} && \fun{F}{Y} \\
			\\
			&& Z &&&& \fun{F}{Z}
			\arrow["F", bend right=-15, Rightarrow, from=1-2, to=1-6]
			\arrow["f", from=2-1, to=2-3]
			\arrow["g\circ f"', from=2-1, to=4-3]
			\arrow["g", from=2-3, to=4-3]
			\arrow["\fun{F}{f}", from=2-5, to=2-7]
			\arrow["\fun{F}{g\circ f}"', from=2-5, to=4-7]
			\arrow["\fun{F}{g}", from=2-7, to=4-7]
		\end{tikzcd}\]
	\caption{A functor \f{F} from the category \catg{C} to the category \catg{D}}
	\label{fig:schemfunt}
\end{figure}

Some examples of particular functors include: endofunctor (a functor from the category \catg{C} unto \catg{C} itself); forgetful functor (\qte{forgets} or drops some (or all) of the structure of its input prior to mapping to the output); functor isomorphism (functor preserves isomorphism and inverses of underlying categories).

\begin{example}[Endofunctor for \catg{C}]\label{exe:endof}
	The endofunctor of the category \catg{C} is a functor from \catg{C} unto  \catg{C} itself: \f{F:\catg{C}\rightarrow\catg{C}}.
\end{example}

\begin{example}[Forgetful functor to \cat{Set}]\label{exe:forgf}
	A category whose objects are kinds of sets and whose morphisms are kinds of set functions admits a forgetful functor into \cat{Set}, including: \f{F:\cat{Top}\rightarrow\cat{Set}}; \f{F:\cat{Mon}\rightarrow\cat{Set}}; \f{F:\cat{Grp}\rightarrow\cat{Set}}.
\end{example}

\begin{example}[Functor isomorphism]\label{exe:fiso}
	Let \f{F:\catg{C}\rightarrow\catg{D}} be a functor. Let \f{X,Y\in\fun{\ob}{\catg{C}}} and \f{f\in\fun{\catg{C}}{X,Y}} be an isomorphism with inverse \f{g\in\fun{\catg{C}}{Y,X}}. Then \fun{F}{f} is an isomorphism with inverse \fun{F}{g}, where \f{\fun{F}{X}\cong\fun{F}{Y}}.
\end{example}

Where functors are relations \emph{between} categories, a natural transformation relates \emph{a functor with another functor}. Natural transformations \qte{compare different transformations} between categories \cite{spivakcategory2014}.

\begin{definition}[Natural Transformation]\label{def:nattrans}
	
	Let \catg{C} and \catg{D} be categories. Let \f{F:\catg{C} \rightarrow \catg{D}} and \f{G:\catg{C} \rightarrow \catg{D}} be functors between categories \catg{C} and \catg{D}. A \term{natural transformation} \f{\alpha:F\rightarrow G} consists of a choice, for every object \f{X\in \catg{C}}, of a morphism \f{\alpha_X : \fun{F}{X}\rightarrow\fun{G}{X}} in \catg{D} such that, for every object \f{Y\in\catg{C}} and morphism \f{f\in\fun{\catg{C}}{X,Y}}, it holds that \f{\alpha_Y \circ\fun{F}{f}=\fun{G}{f}\circ \alpha_X}. 
	
\end{definition}

We say that the morphism \f{\alpha_X} is the \term{component} of the natural transformation \f{\alpha} at the object \f{X}; likewise, \f{\alpha_Y} is the component of the natural transformation \f{\alpha} at the object \f{Y}. A simple natural transformation from the functor \f{F} to the functor \f{G} is given in \ref{fig:nattrans}.

\begin{figure}[H]
	\centering
	\[\begin{tikzcd}
			\catg{C} && \catg{D}
			\arrow[""{name=0, anchor=center, inner sep=0}, "F", bend right=-30, Rightarrow, from=1-1, to=1-3]
			\arrow[""{name=1, anchor=center, inner sep=0}, "G"', bend right=30, Rightarrow, from=1-1, to=1-3]
			\arrow["\alpha", shorten <=3pt, shorten >=3pt, from=0, to=1]
		\end{tikzcd}\]
	\caption{A simple natural transformation from the functor \f{F} to the functor \f{G}}
	\label{fig:nattrans}
\end{figure}

For a natural transformation, the diagram in \ref{fig:natsq} commutes, called the \term{naturality square} for \f{\alpha}.

\begin{figure}[H]
	\centering
	\[\begin{tikzcd}
			\fun{F}{X} && \fun{F}{Y} \\
			\\
			\fun{G}{X} && \fun{G}{Y}
			\arrow["\fun{F}{f}", from=1-1, to=1-3]
			\arrow["\alpha_X"', from=1-1, to=3-1]
			\arrow["\alpha_Y", from=1-3, to=3-3]
			\arrow["\fun{G}{f}"', from=3-1, to=3-3]
		\end{tikzcd}\]
	\caption{A simple naturality square}
	\label{fig:natsq}
\end{figure}

Natural transformations do not play a role in our categorification of intensional models in this paper, but we include them here to complete our tour of the basic components of CT.

\section{Category theory for Kripke frames}\label{sec:krips}
An exposition on the categorification of Kripke frames is given, relative to the \qte{more fundamental} categories of sets and relations. Much of this material follows from \cite{ChagrovZakharyaschev1997,BlackburnDeRijkeVenema2001,Kishida2017,coecke2009categories}; here we emphasize the structure of the categories themselves and their properties; additionally, we elaborate on the propositions and provide proofs therein that otherwise were not given in the cited references. 

\subsection{Category of relations}

\begin{definition}[Category of sets \cat{Set}]\label{def:set}
	
	\cat{Set} is a category of the form:\begin{align*}
		\cat{Set} = \bt{\fun{\ob}{\cat{Set}},\fun{\catg{\cat{Set}}}{X,Y},\circ_\cat{Set},\id{\cat{Set}}},
	\end{align*}
	
	whose objects \f{\fun{\ob}{\cat{Set}}} are sets, morphisms \f{\fun{\cat{Set}}{X,Y}} are functions \f{f: X\rightarrow Y}, \f{\circ_\cat{Set}} is the associative composition, and the identity \f{\id{\cat{Set}}} morphisms are identity functions on sets.
	
\end{definition}

The composition of functions in \cat{Set} is straightforward: for functions \f{f:X\rightarrow Y} and \f{g:X\rightarrow Y}, their composition \f{g\circ f:X\rightarrow Z} is defined as:\begin{align*}
	\funeq{\bp{g\circ f}}{x}{\fun{g}{\fun{f}{x}}} \esc{for all} x\in X.
\end{align*}

For two functions \f{f_1:X_1\rightarrow Y_1} and \f{f_2:X_2\rightarrow Y_2}, their product \f{f_1\times f_2:\bp{X_1\times X_2}\rightarrow\bp{Y_1 \times Y_2}} is given by:\begin{align*}
	\funeq{\bp{f_1\times f_2}}{x_1,x_2}{\bp{\fun{f_1}{x_1},\fun{f_2}{x_2}}} \esc{for all} \bp{x_1,x_2}\in X_1\times X_2.
\end{align*}

The identity function in \cat{Set} on \f{X} for \f{X\in\f{\fun{\ob}{\cat{Set}}}} is:\begin{align*}
	\id{X}:=\bcdef{\bp{x,x}}{x\in X}.
\end{align*}

\begin{definition}[Relation]\label{def:rel}
	
	A \term{relation} \f{\mdl{R}:X \rightarrow Y} between two sets \f{X} and \f{Y} is a subset of the set of all ordered pairs between \f{X} and \f{Y}, \f{\mdl{R}\subseteq X\times Y}. Given an element \f{\bp{x,y}\in\mdl{R}}, we say that \f{x\in X} \qte{relates to} \f{y\in Y}, written as \f{\mdl{R}xy}. The \term{graph} of the relation is given by:\begin{align*}
		\mdl{R}:=\bcdef{\bp{x,y}}{x\mdl{R}y}.
	\end{align*}
	
\end{definition}

\begin{definition}[Category of relations \cat{Rel}]\label{def:rel2}
	
	\cat{Rel} is a category of the form:\begin{align*}
		\cat{Rel} = \bt{\fun{\ob}{\cat{Rel}},\fun{\cat{Rel}}{X,Y},\circ_\cat{Rel},\id{\cat{Rel}}},
	\end{align*}
	
	whose objects \f{\fun{\ob}{\cat{Rel}}} are sets, morphisms \f{\fun{\cat{Rel}}{A,B}} are relations \f{\mdl{R}: X\rightarrow Y}, \f{\circ_\cat{Rel}} is the associative composition, and the identity \f{\id{\cat{Rel}}} morphisms are identity functions on sets.
	
\end{definition}

The relations \f{\mdl[1]{R}: X\rightarrow Y} and \f{\mdl[2]{R}: Y\rightarrow Z} that share the same set \f{Y} may be composed as \f{\mdl[2]{R}\circ\mdl[1]{R}: X\times Z}, where:\begin{align*}
	\mdl[2]{R}\circ\mdl[1]{R}:=\bcdef{\bp{x,y}}{\mid\exists y\in Y\st x\mdl[1]{R}y \esc{and} y\mdl[2]{R}z}.
\end{align*}

For two relations \f{\mdl[1]{R}: X_1\rightarrow Y_1} and \f{\mdl[2]{R}: X_2\rightarrow Y_2}, their product is the Cartesian product \f{\mdl[1]{R}\times\mdl[2]{R}\subseteq \bp{X_1\times X_2}\rightarrow\bp{Y_1\times Y_2}}, and is given by:\begin{align*}
	\mdl[1]{R}\times\mdl[2]{R}:=\bcdef{\bp{\bp{x,x'},\bp{y,y'}}}{ x\mdl[1]{R}y \et x'\mdl[2]{R}y'}\subseteq \bp{X_1\times X_2}\times\bp{Y_1\times Y_2}.
\end{align*}

The identity function on \cat{Rel}, for \f{X\in\f{\fun{\ob}{\cat{Rel}}}}, is:\begin{align*}
	\id{X}:=\bcdef{\bp{x,x}}{x\in X}.
\end{align*}

Sets and binary relations form \cat{Rel}. Some additional features of \cat{Rel} follow from being an \qte{ordered category with involution} \cite{lambek1999diagram}, and are relevant to our categorification of models.

\begin{definition}[Dagger category]\label{def:dagger}
	A \term{dagger category} (involutive category; *-category) is a category \catg{C} equipped with an involutive contravariant endofunctor \f{\dagger : \catg{C}  \rightarrow \catg{C}}, called the dagger functor, satisfying the following axioms:
	\begin{enumerate}
		\item \f{\forall X \in \fun{\ob}{\catg{C}}\com X^{\dagger} = X}.
		\item \f{\forall f \in \fun{\catg{C}}{X,Y}}, \f{\exists f^{\dagger} \in \fun{\catg{C}}{X,Y}}, called the dagger of \f{f}.
		\item \f{\forall f \in \fun{\catg{C}}{X,Y}\com \bp{f^{\dagger}}^{\dagger} = f}.
		\item \f{\forall f \in \fun{\catg{C}}{X,Y} \et g \in \fun{\catg{C}}{Y,Z}\com \bp{g\circ_\catg{C} f}^{\dagger} = f^{\dagger} \circ_\catg{C} g^{\dagger}}.
		\item \f{\forall X \in \fun{\catg{C}}{X,Y}\com \fun{\id{\catg{C}}}{X^{\dagger}} = \fun{\id{\catg{C}}}{X}}.
	\end{enumerate}
	
\end{definition}

The dagger functor \f{\dagger} essentially \qte{reverses} the morphisms in a category. \cat{Rel} is a dagger category: the dagger of a relation \f{\mdl{R} : X \rightarrow Y} is \f{\mdl{R}^{\dagger}: Y \rightarrow X}, where \f{y\mdl{R}^{\dagger}x} if and only if \f{x\mdl{R}y}.

\begin{definition}[Locally posetal]\label{def:lpos}
	A category \catg{C} is \term{locally posetal} if it satisfies the following conditions:
	
	\begin{enumerate}
		\item for every pair of objects \f{A} and \f{B} in \catg{C}, the hom-set \fun{\catg{C}}{A,B} is a partially ordered set with partial order \f{\leq_{AB}}.
		\item the composition operation in \catg{C} is monotone with respect to the partial orders on the hom-sets; that is, for all \f{A, B, C \in \fun{\ob}{\catg{C}}} and \f{f_1, f_2 \in \fun{\catg{C}}{A,B}} and \f{g_1, g_2 \in \fun{\catg{C}}{B,C}}, if \f{f_1 \leq_{AB} f_2} and \f{g_1 \leq_{BC} g_2}, then \f{g_1 \circ_\catg{C} f_1 \leq_{AC} g_2 \circ_\catg{C} f_2}.
	\end{enumerate}
	
\end{definition}

\cat{Rel} is locally posetal with the partial order on \fun{\cat{Rel}}{X, Y} given by the subset relation \f{\subseteq}, and for \f{\mdl[1]{R}, \mdl[2]{R} \in \fun{\cat{Rel}}{X, Y}\com \mdl[1]{R} \leq \mdl[2]{R}} if and only if \f{\mdl[1]{R} \subseteq \mdl[2]{R}}. The two structures then interact in such a way that the functor \f{\dagger} gives order isomorphisms \f{\dagger:\fun{\cat{Rel}}{X,Y}\rightarrow \fun{\cat{Rel}}{Y,X}}. \f{\mdl[1]{R}\subseteq\mdl[2]{R}} if and only if \f{\mdl[1]{R}^{\dagger}\subseteq\mdl[2]{R}^{\dagger}}.  

As a final note: \cat{Rel} satisfies the \qte{law of modularity}, in which the composition and intersection of relations on the LHS are \emph{always} included in the composition and intersection operation on the RHS:\begin{align*}
	\bp{\bp{\mdl[2]{R}\circ\mdl[1]{R}}\cap\mdl[3]{R}}\subseteq\mdl[2]{R}\circ\bp{\mdl[1]{R}\cap\bp{\mdl[2]{R}^{\dagger}}\circ\mdl[3]{R}}.
\end{align*}

Now armed with the notions of dagger and locally posetal, we have an expressive formulation with which to represent the usual binary relations, particular those relevant in this paper; the following propositions and their proofs show this.

\begin{proposition}\label{prop:reflproof}
	A relation \f{\mdl{R}:X\rightarrow X} is reflexive if and only if \f{\id{X}\subseteq \mdl{R}}.
\end{proposition}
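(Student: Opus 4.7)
The plan is a direct biconditional proof that unpacks the definitions of reflexivity and of the identity relation on $X$ as given in the Kripke frame definition (Definition~\ref{def:intenmod}) and in the identity function on \cat{Rel} just above the proposition. Since both sides are set-theoretic statements about which pairs $(x,x)$ belong to $\mathcal{R}$, there is no real structural content to extract beyond rewriting \f{u\mathcal{R}u} as \f{(u,u)\in\mathcal{R}} and comparing against $\id{X}=\{(x,x)\mid x\in X\}$.

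For the forward direction, I would assume $\mathcal{R}$ is reflexive on $X$, so that for every $u\in X$ we have $u\mathcal{R}u$, i.e., $(u,u)\in\mathcal{R}$. I would then take an arbitrary element of $\id{X}$; by its definition this element must be of the form $(x,x)$ for some $x\in X$, and by reflexivity it lies in $\mathcal{R}$. Since this holds for every generator of $\id{X}$, we conclude $\id{X}\subseteq\mathcal{R}$.

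For the converse, I would assume $\id{X}\subseteq\mathcal{R}$ and fix an arbitrary $u\in X$. Since $(u,u)\in\id{X}$ by the definition of the identity relation, the inclusion immediately gives $(u,u)\in\mathcal{R}$, i.e., $u\mathcal{R}u$. Because $u$ was arbitrary, $\mathcal{R}$ is reflexive on $X$.

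There is no real obstacle here: both directions reduce to noticing that $\id{X}$ is, by construction, precisely the set of pairs witnessing reflexivity, so inclusion in $\mathcal{R}$ is logically equivalent to the universal statement $\forall u\in X,\ u\mathcal{R}u$. The only care required is notational consistency between the infix notation $u\mathcal{R}u$ used in Definition~\ref{def:intenmod} and the graph notation $(u,u)\in\mathcal{R}$ from Definition~\ref{def:rel}.
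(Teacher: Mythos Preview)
Your proposal is correct and follows essentially the same approach as the paper's own proof: both directions proceed by unpacking the definition of $\id{X}$ as $\{(x,x)\mid x\in X\}$ and translating between the infix notation $u\mathcal{R}u$ and the graph notation $(u,u)\in\mathcal{R}$. The paper's argument is structurally identical to yours, down to the choice of which arbitrary element to fix in each direction.
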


\begin{proof}\label{pr:reflproof}
	If \mdl{R} is reflexive, then \f{\id{X} \subseteq \mdl{R}}. Assume that \mdl{R} is reflexive: \f{\forall x\in X, x\mdl{R}x}. Recall the definition of \f{\id{X}}, in which \f{\id{X}:=\bcdef{\bp{x,x}}{x\in X}}: let \f{\bp{x,x}\in \id{X}}; since, by assumption, \mdl{R} is reflexive, we know that \f{x\mdl{R}x}. For a graph of a relation, \f{\bp{x,x}\in\mdl{R}}, and since every element of \f{\id{X}} is also in \mdl{R}, we can conclude that \f{\id{X}\subseteq\mdl{R}}.
	
	Consider the backward direction: if \f{\id{X}\subseteq\mdl{R}}, then \mdl{R} is reflexive.  Assume that \f{\id{X} \subseteq \mdl{R}}. Let \f{x\in X}; by definition of \f{\id{X}}, we know that \f{\bp{x,x}\in \id{X}}. Since, by assumption, \f{\id{X}\subseteq\mdl{R}}, this means that \f{\bp{x,x}\in\mdl{R}}. For a graph of a relation, have \f{x\mdl{R}x}; since this holds for any \f{x\in X}, we can conclude that \mdl{R} is reflexive.
	
	Therefore, \f{\mdl{R}:X\rightarrow X} is reflexive if and only if \f{\id{X}\subseteq \mdl{R}}.
	
\end{proof}

\begin{proposition}\label{prop:relfun}
	A relation \f{\mdl{R}:X\rightarrow Y} is a function if and only if both \f{\id{X}\subseteq \mdl{R}^{\dagger}\circ\mdl{R}} and \f{\mdl{R}\circ\mdl{R}^{\dagger}\subseteq\id{Y}}.
\end{proposition}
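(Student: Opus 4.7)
The plan is to translate each of the two containments into a concrete quantifier statement about pairs in $\mdl{R}$, and then recognize these as precisely the totality and single-valuedness conditions defining a function. A relation $\mdl{R}:X\rightarrow Y$ is a function if and only if (i) for every $x\in X$ there exists some $y\in Y$ with $x\mdl{R}y$, and (ii) whenever $x\mdl{R}y_1$ and $x\mdl{R}y_2$, then $y_1 = y_2$. I will match (i) to the first containment and (ii) to the second.

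First I would unpack the two compositions using Definition~\ref{def:rel2}. Since $\mdl{R}:X\rightarrow Y$, we have $\mdl{R}^{\dagger}:Y\rightarrow X$ by the dagger construction on \cat{Rel}, so $\mdl{R}^{\dagger}\circ\mdl{R}:X\rightarrow X$ and $\mdl{R}\circ\mdl{R}^{\dagger}:Y\rightarrow Y$. Explicitly, $\bp{x,x'}\in \mdl{R}^{\dagger}\circ\mdl{R}$ iff there exists $y\in Y$ with $x\mdl{R}y$ and $x'\mdl{R}y$; and $\bp{y,y'}\in \mdl{R}\circ\mdl{R}^{\dagger}$ iff there exists $x\in X$ with $x\mdl{R}y$ and $x\mdl{R}y'$.

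Next, I would handle the two containments separately. For $\id{X}\subseteq \mdl{R}^{\dagger}\circ\mdl{R}$: by the characterization above, this containment says that for every $x\in X$, the pair $\bp{x,x}$ lies in $\mdl{R}^{\dagger}\circ\mdl{R}$, which unfolds to the existence of some $y\in Y$ with $x\mdl{R}y$ (taking $x'=x$). This is exactly totality (i). Conversely, totality yields the containment by picking the witness $y$ at each $x$. For $\mdl{R}\circ\mdl{R}^{\dagger}\subseteq\id{Y}$: the containment says that whenever $\bp{y,y'}\in \mdl{R}\circ\mdl{R}^{\dagger}$, then $y=y'$; unfolding the composition, this says that whenever some $x\in X$ satisfies $x\mdl{R}y$ and $x\mdl{R}y'$, then $y=y'$. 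This is single-valuedness (ii), and the converse is immediate.

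Combining the two equivalences, both containments hold simultaneously if and only if $\mdl{R}$ is both total and single-valued, i.e., a function $X\rightarrow Y$. There is no real obstacle here; the only point requiring care is keeping the orientations of the dagger and the composition straight so that the existential quantifier in each unfolded composition is correctly matched to the corresponding functional property.
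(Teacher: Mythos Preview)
Your proposal is correct and follows essentially the same approach as the paper: unfold the compositions $\mdl{R}^{\dagger}\circ\mdl{R}$ and $\mdl{R}\circ\mdl{R}^{\dagger}$ elementwise and identify the two containments with totality and single-valuedness, respectively. The only organizational difference is that you establish each containment as \emph{equivalent} to its corresponding functional property and then combine, whereas the paper treats the full forward direction first and then the full backward direction (using a small contradiction for single-valuedness); both arguments are the same in substance.
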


\begin{proof}\label{pr:relfun}
	First, the forward direction: if \mdl{R} is a function, then \f{\id{X}\subseteq \mdl{R}^{\dagger}\circ\mdl{R}} and \f{\mdl{R}\circ\mdl{R}^{\dagger}\subseteq\id{Y}}.  Assume that \mdl{R} is a function. Let \f{\bp{x,x}\in\id{X}}; since \mdl{R} is a function, there exists a unique \f{y\in Y} such that \f{x\mdl{R}y}, which means that \f{\bp{x,y}\in\mdl{R}} and \f{\bp{y,x}\in\mdl{R}^{\dagger}}. Recall: by the definition of composition, \f{\bp{x,x}\in\mdl{R}^{\dagger}\circ\mdl{R}}. Since this holds for any \f{\bp{x,x}\in\id{X}}, we have that \f{\id{X}\subseteq\mdl{R}^{\dagger}\circ\mdl{R}}.
	
	To show now that \f{\mdl{R}\circ\mdl{R}^{\dagger}\subseteq\id{Y}}, we follow reasoning similar to above: let \f{\bp{y,y'}\in\mdl{R}\circ\mdl{R}^{\dagger}}; again, there exists an \f{x\in X} such that \f{x\mdl{R}y} and \f{x\mdl{R}y'}. Since \mdl{R} is a function, \f{y} must equal \f{y'}; therefore, \f{\bp{y,y'} = \bp{y,y}\in\id{Y}}, and since this holds for any such \f{\bp{y,y'}\in\mdl{R}\circ\mdl{R}^{\dagger}}, we have that \f{\mdl{R}\circ\mdl{R}^{\dagger}\subseteq\id{Y}}.
	
	Now the backward direction: if \f{\id{X}\subseteq\mdl{R}^{\dagger}\circ\mdl{R}} and \f{\mdl{R}\circ\mdl{R}^{\dagger} \subseteq \id{Y}}, then \mdl{R} is a function. Assume that \f{\id{X}\subseteq\mdl{R}^{\dagger}\circ\mdl{R}} and \f{\mdl{R}\circ\mdl{R}^{\dagger} \subseteq \id{Y}}. Let \f{x\in X}, and \f{\bp{x,x}\in\id{X}}. Since we have that \f{\id{X}\subseteq \mdl{R}^{\dagger}\circ\mdl{R}}, we have \f{\bp{x,x}\in\id{X}}, which means that there exists a \f{y\in Y} such that \f{x\mdl{R}y}. Therefore, \mdl{R} is defined for every value \f{x\in X}.
	
	\mdl{R} is single-valued (\ie, \mdl{R} maps each input to at most one output); we show this by contradiction: assume that \mdl{R} is not single-valued, that there exists \f{y,y'\in Y} such that \f{x\mdl{R}y} and \f{x\mdl{R}y'} with \f{y\neq y'}, which means \f{\bp{y,x}\in\mdl{R}^{\dagger}} and \f{\bp{x,y'}\in\mdl{R}}. By composition, \f{\bp{y,y'}\in\mdl{R}\circ\mdl{R}^{\dagger}}. Since \f{\mdl{R}\circ\mdl{R}^{\dagger}\subseteq\id{Y}}, it must be the case that \f{y=y'}, our contradiction. Therefore, \mdl{R} maps each input to at most one output.
	
	It follows, then, that since \mdl{R} is both total and single-valued, it is a function. Therefore, \mdl{R} is a function if and only if both \f{\id{X}\subseteq \mdl{R}^{\dagger}\circ\mdl{R}} and \f{\mdl{R}\circ\mdl{R}^{\dagger}\subseteq\id{Y}}.
\end{proof}

For Propositions~\ref{prop:funinj} and \ref{prop:funsurj}, as well as their respective proofs \ref{pr:funinj} and \ref{pr:funsurj}, we assume that \f{f} is a function from \f{X} to \f{Y}, which implies that \f{f} is already both total (defined for all \f{x \in X}) and single-valued (each \f{x \in X} maps to a unique \f{y \in Y}). For consistency's sake, we also retain the notation for relations (\ie, \f{xfy}).

\begin{proposition}\label{prop:funinj}
	A function \f{f : X \rightarrow Y} is injective if and only if \f{f^{\dagger}\circ f = \id{X}}.
\end{proposition}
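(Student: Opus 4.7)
The plan is to prove the biconditional in two directions, both of which reduce to careful unpacking of relational composition together with the containment \f{\id{X} \subseteq \mdl{R}^{\dagger}\circ\mdl{R}} already guaranteed by Proposition~\ref{prop:relfun}. Since \f{f} is assumed to be a function, that containment specialises for free to \f{\id{X}\subseteq f^{\dagger}\circ f}, so the forward direction needs only the reverse inclusion and the backward direction is essentially a definitional chase.

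For the forward direction, I would assume \f{f} is injective and aim to establish \f{f^{\dagger}\circ f \subseteq \id{X}}. Picking an arbitrary pair \f{\bp{x,x'} \in f^{\dagger}\circ f}, the definition of relational composition produces some \f{y\in Y} with \f{xfy} and \f{y f^{\dagger} x'}, the latter unfolding to \f{x' f y}. Both \f{x} and \f{x'} are then sent to the same value \f{y} by \f{f}, so injectivity forces \f{x = x'}, placing the pair in \f{\id{X}}. Combining this new inclusion with the free one from Proposition~\ref{prop:relfun} gives the required equality.

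For the backward direction, I would assume \f{f^{\dagger}\circ f = \id{X}} and suppose \f{\fun{f}{x} = \fun{f}{x'}} for some \f{x, x' \in X}. Writing \f{y} for this common value, we have \f{xfy} and \f{x' f y}, whence \f{y f^{\dagger} x'}. Composition then produces \f{\bp{x,x'} \in f^{\dagger}\circ f}, and the assumed equality places this pair in \f{\id{X}}, so \f{x = x'} as required.

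The main obstacle, modest as it is, is bookkeeping around the dagger and the order of composition: one must consistently track that \f{\mdl[2]{R}\circ\mdl[1]{R}} passes through the intermediate set via \f{\mdl[1]{R}} first and \f{\mdl[2]{R}} second, and that transposing a relation to its dagger swaps the argument order in \f{\bp{x,y}}. Beyond that, nothing deep is required; the proposition is essentially a dictionary entry translating the elementwise property of injectivity into a relational identity in the dagger-categorical formalism.
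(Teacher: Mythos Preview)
Your proposal is correct and follows essentially the same argument as the paper's proof: both unpack relational composition to get \f{f^{\dagger}\circ f \subseteq \id{X}} from injectivity, and both run the backward direction by the same definitional chase. The only cosmetic difference is that you obtain the inclusion \f{\id{X}\subseteq f^{\dagger}\circ f} by citing Proposition~\ref{prop:relfun}, whereas the paper re-derives it inline from totality of \f{f}; the content is identical.
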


\begin{proof}\label{pr:funinj}
	
	First, if \f{f} is injective, then \f{f^{\dagger}\circ f = \id{X}}. Assume that \f{f} is injective. Let \f{\bp{x,x'}\in f^{\dagger}\circ f}. By the definition of composition, there exists a \f{y \in Y} such that \f{xfy} and \f{yf^{\dagger}x'}, which means that \f{\funeq{f}{x}{y}} and \f{\funeq{f}{x'}{y}}. Since \f{f} is injective by our assumption, then \f{\funeq{f}{x}{\fun{f}{x'}}} implies that \f{x=x'}; therefore, \f{\bp{x,x'}=\bp{x,x}\in\id{X}}; since this holds for any \f{\bp{x,x'}\in f^{\dagger}\circ f}, we have that \f{f^{\dagger}\circ f\subseteq\id{X}}.
	
	To show that \f{\id{X}\subseteq f^{\dagger}\circ f}, again let \f{\bp{x,x}\in\id{X}}. Since \f{f} is a function, there exists a \f{y\in Y} such that \f{\funeq{f}{x}{y}}, which means that \f{xfy} and \f{yf^{\dagger}x}. By definition of composition, we have that \f{\bp{x,x}\in f^{\dagger}\circ f}; since this holds for any \f{\bp{x,x}\in\id{X}}, we have \f{\id{X} \subseteq f^{\dagger}\circ f}. We can conclude, therefore, that \f{f^{\dagger}\circ f = \id{X}}. 
	
	For the backward direction: if \f{f^{\dagger}\circ f = \id{X}}, then \f{f} is injective; we must show that for any \f{x_1,x_2\in X}, if \f{\funeq{f}{x_1}{\fun{f}{x_2}}}, then \f{x_1=x_2}. Assume that \f{f^{\dagger}\circ f = \id{X}}. Let \f{x_1,x_2\in X} be arbitrary elements that satisfy \f{\funeq{f}{x_1}{\funeq{f}{x_2}{y}}} for some \f{y\in Y}, which means that \f{x_1 fy} and \f{yf^{\dagger}x_2}. Again, by definition of composition, \f{\bp{x_1,x_2}\in f^{\dagger}\circ f}. Since \f{f^{\dagger}\circ f = \id{X}}, it must be the case that \f{\bp{x_1,x_2}\in \id{X}}, and by defintion of the identity \f{\id{X}}, this means that \f{x_1=x_2}. Therefore, \f{f} is injective.
	
	We conclude, therefore, that \f{f} is injective if and only if \f{f^{\dagger}\circ f=\id{X}}.
	
\end{proof}

\begin{proposition}\label{prop:funsurj}
	A function \f{f : X \rightarrow Y} is surjective if and only if \f{f\circ f^{\dagger} = \id{Y}}.
\end{proposition}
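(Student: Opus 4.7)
The plan is to mirror the structure of the proof of Proposition~\ref{prop:funinj}, exploiting the fact that surjectivity is, in the dagger category \cat{Rel}, the dual notion of injectivity. Since we are already assuming that \f{f} is a function (hence total and single-valued), Proposition~\ref{prop:relfun} applies, so I can freely use that $f^{\dagger}$ is a genuine converse relation. I will split the biconditional into the usual two implications and handle each by unfolding the definition of composition via the dagger.

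For the forward direction, I would assume \f{f} is surjective and prove $f \circ f^{\dagger} = \id{Y}$ by showing both inclusions. For $\id{Y} \subseteq f \circ f^{\dagger}$, take $(y,y) \in \id{Y}$; by surjectivity there is some $x \in X$ with $\fun{f}{x} = y$, hence $x f y$ and $y f^{\dagger} x$, so by the definition of composition $(y,y) \in f \circ f^{\dagger}$. For the reverse inclusion $f \circ f^{\dagger} \subseteq \id{Y}$, take $(y,y') \in f \circ f^{\dagger}$; there is then an $x \in X$ with $y f^{\dagger} x$ and $x f y'$, i.e.\ $\fun{f}{x} = y$ and $\fun{f}{x} = y'$; single-valuedness of \f{f} forces $y = y'$, so $(y,y') \in \id{Y}$.

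For the backward direction, I would assume $f \circ f^{\dagger} = \id{Y}$ and show surjectivity directly: given any $y \in Y$, we have $(y,y) \in \id{Y} = f \circ f^{\dagger}$, so by definition of composition there exists $x \in X$ such that $y f^{\dagger} x$ and $x f y$, meaning $\fun{f}{x} = y$. Since $y$ was arbitrary, \f{f} is surjective.

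The main obstacle is essentially notational rather than conceptual: one must be careful to keep straight the direction in which pairs appear when passing between $\mdl{R}$ and $\mdl{R}^{\dagger}$, and to invoke single-valuedness (which is part of being a function, already guaranteed by the hypothesis and Proposition~\ref{prop:relfun}) at the right moment in the reverse inclusion of the forward direction. Once that bookkeeping is handled, the proof is a clean dualization of Proposition~\ref{prop:funinj}'s argument.
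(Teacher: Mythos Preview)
Your proposal is correct and follows essentially the same approach as the paper's own proof: both split the biconditional, establish the two inclusions for the forward direction by unfolding composition and invoking single-valuedness of \f{f}, and handle the backward direction by reading off a preimage from \f{(y,y)\in f\circ f^{\dagger}}. The only cosmetic difference is that the paper proves \f{f\circ f^{\dagger}\subseteq\id{Y}} before \f{\id{Y}\subseteq f\circ f^{\dagger}}, whereas you do them in the opposite order.
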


\begin{proof}\label{pr:funsurj}
	
	If \f{f} is surjective, then \f{f\circ f^{\dagger} = \id{Y}}. Assume that \f{f} is surjective. As usual, let \f{\bp{y,y'}\in f\circ f^{\dagger}} be an arbitrary element of \f{f\circ f^{\dagger}}. By the definition of composition, there exists an \f{x \in X} such that \f{yf^{\dagger}x} and \f{xfy'}, which means that \f{\funeq{f}{x}{y'}} and \f{\funeq{f}{x}{y}}. Therefore, \f{y=y'}, which implies that \f{\bp{y,y'} = \bp{y,y} \in \id{Y}}; since this holds for any \f{\bp{y,y'} \in f\circ f^{\dagger}}, we have that \f{f\circ f^{\dagger} \subseteq \id{Y}}. 
	
	To show that \f{\id{Y}\subseteq f\circ f^{\dagger}}, again let \f{\bp{y,y}\in\id{Y}}. Since \f{f} is surjective, there exists an \f{x \in X} such that \f{\funeq{f}{x}{y}}, which means that \f{xfy} and \f{yf^{\dagger}x}, and by the definition of composition, \f{\bp{y,y}\in f\circ f^{\dagger}}. Since this holds for any \f{\bp{y,y}\in\id{Y}}, we have \f{\id{Y} \subseteq f\circ f^{\dagger}}. We can conclude, therefore, that \f{f\circ f^{\dagger} = \id{Y}}.
	
	Consider the backward direction: if \f{f\circ f^{\dagger} = \id{Y}}, then \f{f} is surjective. Assume \f{f\circ f^{\dagger} = \id{Y}}. Let \f{y\in Y}; we know that \f{\bp{y,y}\in\id{Y}}. Since \f{f\circ f^{\dagger}=\id{Y}}, it must be the case that \f{\bp{y,y}\in f\circ f^{\dagger}}. There exists \f{x\in X} such that \f{yf^{\dagger}x} and \f{xfy}, where we have that \f{xfy} means \f{\funeq{f}{x}{y}}; for any \f{y \in Y}, then, there is an \f{x\in X} such that \f{\funeq{f}{x}{y}}, from which it follows that \f{f} is surjective.
	
	Therefore, \f{f} is surjective if and only if \f{f\circ f^{\dagger} = \id{Y}}.
	
\end{proof}

\begin{proposition}\label{prop:relfuncomp}
	When \mdl[1]{R} and \mdl[2]{R} are functions, the composition \f{\mdl[2]{R}\circ\mdl[1]{R}} of relations is the composition of functions.
\end{proposition}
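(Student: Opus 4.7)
The plan is to directly unfold the definition of relational composition from the start of Section~\ref{sec:krips}: when \f{\mdl[1]{R}:X\rightarrow Y} and \f{\mdl[2]{R}:Y\rightarrow Z} are functions (in the sense of Proposition~\ref{prop:relfun}, so that each input has a unique related output), I will show that the composite relation \f{\mdl[2]{R}\circ\mdl[1]{R}} has precisely the graph \f{\bcdef{\bp{x,z}}{z=\fun{\mdl[2]{R}}{\fun{\mdl[1]{R}}{x}}}}, which is by definition the graph of the functional composite of \f{\mdl[1]{R}} followed by \f{\mdl[2]{R}}. The content of the proposition is essentially a definition-unpacking exercise, proceeding in two short inclusions.

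First I would pick an arbitrary \f{\bp{x,z}\in\mdl[2]{R}\circ\mdl[1]{R}}. By the defining clause of relational composition, there exists \f{y\in Y} with \f{x\mdl[1]{R}y} and \f{y\mdl[2]{R}z}. Since \f{\mdl[1]{R}} is a function, this \f{y} is uniquely determined as \f{\fun{\mdl[1]{R}}{x}}, and since \f{\mdl[2]{R}} is a function, \f{z} is uniquely determined as \f{\fun{\mdl[2]{R}}{y}=\fun{\mdl[2]{R}}{\fun{\mdl[1]{R}}{x}}}. Thus every element of the relational composite has the prescribed form, giving one inclusion.

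For the reverse inclusion, fix any \f{x\in X} and set \f{y=\fun{\mdl[1]{R}}{x}} and \f{z=\fun{\mdl[2]{R}}{y}}. Then \f{x\mdl[1]{R}y} and \f{y\mdl[2]{R}z} both hold by the definition of \f{\mdl[1]{R}} and \f{\mdl[2]{R}} as functions, so \f{\bp{x,z}\in\mdl[2]{R}\circ\mdl[1]{R}} by the definition of composition. The two inclusions jointly show that the graph of the relational composite coincides with the graph of the functional composite, which is the claim.

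The only mild obstacle is the bookkeeping around using \f{\mdl[i]{R}} simultaneously as a relation (for composition) and as a function (for functional application). To keep this airtight, one can alternatively route the proof through Proposition~\ref{prop:relfun} and first verify that \f{\mdl[2]{R}\circ\mdl[1]{R}} satisfies \f{\id{X}\subseteq\bp{\mdl[2]{R}\circ\mdl[1]{R}}^{\dagger}\circ\bp{\mdl[2]{R}\circ\mdl[1]{R}}} and \f{\bp{\mdl[2]{R}\circ\mdl[1]{R}}\circ\bp{\mdl[2]{R}\circ\mdl[1]{R}}^{\dagger}\subseteq\id{Z}} using the dagger identities (associativity of \f{\circ} and \f{\bp{g\circ f}^{\dagger}=f^{\dagger}\circ g^{\dagger}}), thereby confirming the composite is itself a function before identifying its action on inputs. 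I would prefer the direct unpacking above unless consistency with the surrounding dagger-calculus style dictates the longer route.
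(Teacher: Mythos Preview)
Your proposal is correct and follows essentially the same approach as the paper: both arguments unfold the definition of relational composition and use the totality and single-valuedness of \f{\mdl[1]{R}} and \f{\mdl[2]{R}} to show that \f{\bp{x,z}\in\mdl[2]{R}\circ\mdl[1]{R}} if and only if \f{z=\fun{\mdl[2]{R}}{\fun{\mdl[1]{R}}{x}}}. Your two-inclusion presentation is slightly more explicit than the paper's, and your optional dagger-calculus route is additional but not needed.
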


\begin{proof}\label{pr:relfuncomp}
	Consider functions \f{\mdl[1]{R}: X \rightarrow Y} and \f{\mdl[2]{R}: Y \rightarrow Z}. Recall the composition for relations:\begin{align*}
		\mdl[2]{R}\circ\mdl[1]{R}:=\bcdef{\bp{x,y}}{\exists y\in Y\st \mdl[1]{R}xy \esc{and} \mdl[2]{R}yz}.
	\end{align*}
	
	When \mdl[1]{R} and \mdl[2]{R} are functions, for an arbitrary \f{x \in X}, there exists a unique \f{y \in Y} such that \f{x\mdl[1]{R}y}, and for this \f{y}, there exists a unique \f{z \in Z} such that \f{y\mdl[2]{R}z}; for each \f{x\in X}, the composition \f{\mdl[2]{R}\circ\mdl[1]{R}} associates a unique \f{z\in Z}. For functions \f{f: X \rightarrow Y} and \f{g: Y \rightarrow Z}, their composition \f{\bp{g\circ f}: X \rightarrow Z} is: \f{\funeq{\bp{g \circ f}}{x}{\fun{g}{\fun{f}{x}}}} for all \f{x\in X}. 
	
	Let \f{x\in X}; \fun{\mdl[1]{R}}{x} gives the unique \f{y \in Y} and \fun{\mdl[2]{R}}{y} the unique \f{z\in Z}, the same \f{z} from the relation composition. Therefore, for any \f{x\in X}, \f{\bp{x,z}\in\mdl[2]{R}\circ\mdl[1]{R}} (as relations) if and only if \f{z = \fun{\mdl[2]{R}\circ\mdl[1]{R}}{x}} (as functions), which shows that the relation composition produces the same pairs \f{\bp{x,z}} as the function composition.
	
	Therefore, when \mdl[1]{R}and \mdl[2]{R} are functions, their composition as relations is equivalent to their usual composition as functions.
\end{proof}

An important result in category theory is showing that there exists a full (surjection on morphisms that are functions) and faithful (injection on morphisms) functor (a subcategory relationship that is injective on objects) between the category \cat{Set} of sets and functions and the category \cat{Rel} of sets and relations: the objects of \cat{Set} are objects of \cat{Rel}; the morphisms of \cat{Set} are also morphisms of \cat{Rel}; the composition and identity in \cat{Set} coincide with those in \cat{Rel}.

\begin{definition}[Subcategory]\label{def:subcat}
	A category \catg{C} is a \term{subcategory} of \catg{D} if the following hold:
	
	\begin{enumerate}
		\item every object in \catg{C} must also be an object in \catg{D}: \f{\fun{\ob}{\catg{C}}\subseteq\fun{\ob}{\catg{D}}}.
		\item any two objects and morphism between them in \catg{C} must also be in \catg{D}; that is, \f{\forall X,Y\in\fun{\ob}{\catg{C}},\fun{\catg{C}}{X,Y}\subseteq\fun{\catg{D}}{X,Y}}.
		\item \f{\circ_\catg{C}} is the restriction of \f{\circ_\catg{D}}.
		\item the identity morphisms in \catg{C} are the same as the identity morphisms in \catg{D} for each object; that is, \f{\forall X\in\fun{\ob}{\catg{C}}\com\funeq{\id{\catg{C}}}{X}{\fun{\id{\catg{D}}}{X}}}.
	\end{enumerate}
	
\end{definition}

The objects in both \cat{Set} and \cat{Rel} are sets; not more needs to be said here about that. In \cat{Set}, morphisms are functions \f{f: X \rightarrow Y}, and in \cat{Rel} morphisms are relations \f{\mdl{R}:X\rightarrow Y}; every function is a special case of a relation, in which a function \f{f:X\rightarrow Y} is a relation \f{\mdl[f]{R}\subseteq X\times Y}, where \f{\mdl[f]{R} := \bcdef{\bp{x,\fun{f}{x}}}{x \in X}} that satisfies totality and is single-valued. We have shown above that the composition of relations, when applied to functions, coincides with the usual function composition; in \cat{Set}, the identity function \f{\id{X}: X \rightarrow X} is defined as \f{\funeq{\id{X}}{x}{x}} for all \f{x \in X}; in \cat{Rel}, the identity relation on \f{X} is defined as: \f{\id{X} := \bcdef{\bp{x,x}}{x \in X}}, which coincides when we view the identity function as a relation. 

\begin{proposition}\label{prop:joint}
	Jointly monic pairs of functions give rise to relations; in this way, there is a correspondence between \cat{Set} and \cat{Rel}.
\end{proposition}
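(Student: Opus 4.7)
The plan is to unpack what a jointly monic pair of functions in \cat{Set} is, show that such a pair canonically determines a relation, and conversely that every relation arises in this way up to isomorphism of the apex object; this gives the claimed correspondence between the two categories.

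First I would recall that a pair of functions $f:R\to X$ and $g:R\to Y$ in \cat{Set} is \emph{jointly monic} when the induced pairing map $\langle f,g\rangle:R\to X\times Y$, defined by $r\mapsto \bp{\fun{f}{r},\fun{g}{r}}$, is a monomorphism, i.e.\ an injection in \cat{Set}. From here the forward direction is almost a definition chase: the image $\bcdef{\bp{\fun{f}{r},\fun{g}{r}}}{r\in R}\subseteq X\times Y$ is by construction a subset of $X\times Y$, and hence (by Definition~\ref{def:rel}) a relation $\mdl{R}_{f,g}:X\to Y$ in \cat{Rel}. Joint monicity ensures that the apex $R$ maps bijectively onto this image, so $R$ can be identified with the graph of the relation and no collisions collapse distinct points of $R$ into the same pair.

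For the converse, given any relation $\mdl{R}\subseteq X\times Y$ viewed as a morphism in \cat{Rel}, I would take the apex object to be the underlying set $\mdl{R}$ itself, together with the restricted projections $\pi_X:\mdl{R}\to X$ and $\pi_Y:\mdl{R}\to Y$, $\bp{x,y}\mapsto x$ and $\bp{x,y}\mapsto y$ respectively. Then $\langle \pi_X,\pi_Y\rangle:\mdl{R}\hookrightarrow X\times Y$ is literally the subset inclusion, which is trivially injective, so $\bp{\pi_X,\pi_Y}$ is jointly monic in \cat{Set}. This recovers $\mdl{R}_{\pi_X,\pi_Y}=\mdl{R}$, demonstrating that the assignment from relations to jointly monic spans is surjective onto all relations.

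To assemble the correspondence, I would note that two jointly monic pairs $\bp{f,g}$ and $\bp{f',g'}$ with the same codomains $X,Y$ yield the same relation precisely when there is a bijection $h:R\to R'$ with $f'\circ h=f$ and $g'\circ h=g$; this equivalence class is exactly what a relation in \cat{Rel} records. Hence we get a bijection between relations $\mdl{R}:X\to Y$ in \cat{Rel} and isomorphism classes of jointly monic spans $X\leftarrow R\rightarrow Y$ in \cat{Set}, using only morphisms (functions) of \cat{Set} to present morphisms (relations) of \cat{Rel}.

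The main obstacle, in my view, is not the set-theoretic content—it is essentially trivial once joint monicity is unpacked—but rather the bookkeeping of what \emph{correspondence} means: one must be careful to quotient jointly monic pairs by isomorphism of their apex objects before the map to relations becomes a bijection, and one must check that composition and identities behave well enough under this correspondence to get a genuine connection between \cat{Set} (with its jointly monic spans) and \cat{Rel} rather than merely a set-level bijection. Once that scaffolding is in place, the proof itself reduces to checking the image/inclusion description above.
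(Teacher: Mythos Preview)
Your proposal is correct and follows essentially the same construction as the paper: both directions use the projections from the graph and the image of the pairing map. The one notable difference in emphasis is that the paper, having set up the dagger structure on \cat{Rel}, explicitly verifies the identity \f{\mdl{R} = r_2 \circ r_1^{\dagger}} for the projection span \f{\bp{r_1,r_2}}, thereby recovering the relation as a composite of morphisms already living in \cat{Rel}; you instead describe the relation purely as the image subset and compensate by being more careful about quotienting jointly monic spans by isomorphism of the apex, which the paper leaves implicit. Both routes establish the same correspondence; the paper's buys an explicit link to the dagger calculus developed earlier, while yours makes the bijectivity of the correspondence cleaner.
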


\begin{proof}\label{pr:joint}
	A pair of functions \f{f:Z\rightarrow X} and \f{g:Z\rightarrow Y} is \qte{jointly monic} if \f{\bt{f,g}:Z\rightarrow \bp{X\times Y}} is injective. A relation \f{\mdl{R} : X\rightarrow Y} corresponds to a jointly monic pair of functions: the projection \f{r_1:\mdl{R}\rightarrow X} and \f{r_2:\mdl{R}\rightarrow Y} from the set \f{\mdl{R}\subseteq X\times Y}, so that, for the pair \bp{r_1,r_2}, it is that case that \f{\mdl{R} = r_2\cdot r^\dagger_1}.
	
	The set \f{\mdl{R}\subseteq X\times Y} is a graph of the relation \f{\mdl{R}=\bcdef{\bp{x,y}}{x\mdl{R}y}}. Consider the following function: \f{\bt{r_1,r_2}:\mdl{R}\rightarrow \bp{X\times Y}}, which sends a \f{u\mapsto \bp{\fun{r_1}{u}},\fun{r_2}{u}}. For any \f{u=\bp{w,v}\in\mdl{R}}, we have that \f{\funeq{\bt{r_1,r_2}}{u}{\bp{w,v}}}, which means that \f{\bt{r_1,r_2}} is the identity on \mdl{R}. Since the identity is injective, it follows that \bt{r_1,r_2} is injective, and \bp{r_1,r_2} is jointly monic.
	
	To show that \f{\mdl{R} = r_2\circ r^\dagger_1}, consider the following: for any \f{x\in X} and \f{\bp{w,v}\in\mdl{R}}, \f{x r^\dagger_1\bp{w,v}} if an only if \f{\bp{w,v} r^\dagger_1 x}. The composition \f{r_2\circ} follows as: \f{\bp{x,y}\in r_2\circ r^\dagger_1} if and only if there exists some \f{\bp{w,v}\in\mdl{R}} such that \f{xr^\dagger_1\bp{w,v}} and \f{\bp{w,v} r_2y}. It follows that \f{xr^\dagger_1\bp{w,v}} means \f{x=w}, and, similarly, that \f{ \bp{w,v}r_2y} means \f{v=y}, so \f{\bp{x,y}\in r_2\circ r^\dagger_1} if and only if there exists \f{\bp{x,y}\in\mdl{R}}, the very defintiion of \mdl{R}! Therefore, \f{\mdl{R} = r_2 \circ r^\dagger_1}.
	
	Conversely, given any jointly monic pair of functions \f{f : Z \rightarrow X} and \f{g : Z \rightarrow Y}, we can define a relation \f{\mdl{R} : X \rightarrow Y} as: \f{\mdl{R} := \bcdef{\bp{\fun{f}{z}, \fun{g}{z}}}{z \in Z}}; this is well-defined because \f{\bp{f,g}} is jointly monic, which means that difference \f{z} values map to different pairs \f{\bp{\fun{f}{z},\fun{g}{z}}}. Therefore, every relation \f{\mdl{R} : X \rightarrow Y} can be represented by a jointly monic pair of functions \f{\bp{r_1,r_2}} from \f{\mdl{R}\subseteq X \times Y}, such that \f{\mdl{R} = r_2 \circ r^\dagger_1}; conversely, any jointly monic pair of functions gives rise to a relation, proving a correspondence between relations in \cat{Rel} and certain pairs of functions in \cat{Set}.
\end{proof}

\subsection{Category of Kripke frames}

\begin{definition}[Kripke frame]\label{def:krframe}
	
	A \term{Kripke frame} is an indexed family of tuples whose elements are a set \f{X} and a binary relation \f{\mdl{R}} on the elements of that set:\begin{align*}
		\mdl{F} = \bt{X,\mdl{R}}.
	\end{align*}
	
\end{definition}

An important definition similar to what we are building towards is a Kripke model: a Kripke frame \mdl{F} equipped with a valuation \mdl{V} function. We mention it here for its similitude, but it will not play a role here.

\begin{definition}[Kripke model]\label{def:krmod}
	
	A \term{Kripke model} is a triple of the form:\begin{align*}
		\mdl[\mdl{F}]{M} = \bt{X,\mdl{R},\mdl{V}},
	\end{align*}
	
	where \f{X} is a nonempty set of elements; \f{\mdl{R}\subseteq X\times X} is a binary relation on the elements in \f{X}; \f{\mdl{V}:P\rightarrow\fun{\mathcal{P}}{X}} is a propositional valuation mapping each propositional letter \f{p\in P} from a nonempty set of propositional letters \f{P} to the set \f{\fun{\mdl{V}}{p}\subseteq X} of elements at which that letter is true.
	
\end{definition}

\begin{definition}[Category \cat{Kr}]\label{def:krcat}
	
	\cat{Kr} is a category of the form:\begin{align*}
		\cat{Kr} = \bt{\fun{\ob}{\cat{Kr}},\fun{\cat{Kr}}{X,Y},\circ_\cat{Kr},\id{\cat{Kr}}},
	\end{align*}
	
	whose objects \f{\fun{\ob}{\cat{Kr}}} are Kripke frames, morphisms \f{\fun{\cat{Kr}}{X,Y}} are monotone maps between objects that preserve relations, \f{\circ_\cat{Kr}} is the associative composition, and the identity \f{\id{\cat{Kr}}} morphisms are identity functions on frames.
	
\end{definition}

The monotone maps between frames \f{\bt{X,\mdl[X]{R}}} and \f{\bt{Y,\mdl[Y]{R}}} are functions of the form \f{f:X\rightarrow Y} that preserve relations, \ie, such that \f{f:w\mdl[X]{R}v \rightarrow \fun{f}{w}\mdl[Y]{R}\fun{f}{v}}.

\begin{proposition}\label{prop:fr}
	\f{\mdl[X]{R} \subseteq f^{\dagger} \circ\mdl[Y]{R} \circ f}; \f{w\mdl[X]{R}v} implies \f{wfw'\mdl[Y]{R} v' f^{\dagger}v} for \f{w',v'\in Y}.
\end{proposition}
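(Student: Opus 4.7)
The plan is to chase the three definitions in play: the monotonicity condition defining morphisms of \cat{Kr}, the composition of relations in \cat{Rel}, and the dagger (relational converse). The two displayed claims in the proposition are really one claim stated twice — a subset inclusion of relations on \f{X}, together with the explicit existence of the intermediate witnesses in \f{Y} that realise the composite — so a single pointwise argument proves both at once.

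First I would fix an arbitrary pair \f{\bp{w,v}\in\mdl[X]{R}}, \ie{} \f{w\mdl[X]{R}v}, and invoke the defining property of a morphism in \cat{Kr} (Definition~\ref{def:krcat}): since \f{f:X\rightarrow Y} is a monotone map between the frames \f{\bt{X,\mdl[X]{R}}} and \f{\bt{Y,\mdl[Y]{R}}}, we have \f{\fun{f}{w}\mdl[Y]{R}\fun{f}{v}}. Set \f{w':=\fun{f}{w}} and \f{v':=\fun{f}{v}}; these are precisely the two elements of \f{Y} that the second clause of the statement anticipates. Next I would verify each of the three links in the composite \f{f^{\dagger}\circ\mdl[Y]{R}\circ f} at the pair \f{\bp{w,v}}: (i) \f{wfw'} holds by construction, treating \f{f} as a relation in \cat{Rel}; (ii) \f{w'\mdl[Y]{R}v'} is the monotonicity conclusion just derived; (iii) \f{v'f^{\dagger}v} is equivalent to \f{vfv'}, which holds by construction, via the dagger axiom that \f{yf^{\dagger}x\wennif xfy}. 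Applying the definition of relational composition from the \cat{Rel} section twice then gives \f{\bp{w,v}\in f^{\dagger}\circ\mdl[Y]{R}\circ f}. Since \f{\bp{w,v}\in\mdl[X]{R}} was arbitrary, the containment \f{\mdl[X]{R}\subseteq f^{\dagger}\circ\mdl[Y]{R}\circ f} follows.

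I do not anticipate a substantive obstacle: the statement is essentially a repackaging of the monotonicity clause in the language of relational composition and the dagger. The only mild subtlety is bookkeeping with associativity of relational composition — whether one parses the composite as \f{\bp{f^{\dagger}\circ\mdl[Y]{R}}\circ f} or as \f{f^{\dagger}\circ\bp{\mdl[Y]{R}\circ f}} — but composition in \cat{Rel} is associative by Definition~\ref{def:rel2}, so the choice is immaterial when introducing the intermediate witnesses \f{w'} and \f{v'}.
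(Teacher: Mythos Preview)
Your proposal is correct and follows essentially the same approach as the paper's proof: both invoke the monotonicity of \f{f} to obtain \f{\fun{f}{w}\mdl[Y]{R}\fun{f}{v}}, set \f{w'=\fun{f}{w}} and \f{v'=\fun{f}{v}} as the witnesses, and then read off the three links of the composite \f{f^{\dagger}\circ\mdl[Y]{R}\circ f}. Your version is marginally more direct in that you go straight from the pointwise monotonicity clause to the witnesses, whereas the paper first rephrases monotonicity as the inclusion \f{f\circ\mdl[X]{R}\subseteq\mdl[Y]{R}\circ f} and extracts \f{w'} from that; but this is a cosmetic difference, not a substantive one.
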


\begin{proof}\label{pr:fr}
	Assume \f{w\mdl[X]{R}v} for arbitrary \f{w,v\in X}. Since \f{f} is a monotone map, it preserves relations; that is, \f{f\circ \mdl[X]{R} \subseteq \mdl[Y]{R} \circ f}. Applying this to the assumption that \f{w\mdl[X]{R} v}, there exists a \f{w'\in Y} such that \f{wfw'\mdl[Y]{R} \fun{f}{v}}.
	
	Consider the converse relation of \f{f}: \f{f^{\dagger}}, where for any \f{y\in Y} and \f{x\in X}, \f{yf^{\dagger}x} if and only if \f{\funeq{f}{x}{y}}. Let \f{v'=\fun{f}{v}}. We know that \f{v' f^{\dagger}v} because \f{\funeq{f}{v}{v'}}. It follows, then, that we have \f{wfw'\mdl[Y]{R} v'} and \f{v'f^{\dagger}v}, which can be written as \f{wfw'\mdl[Y]{R}v'f^{\dagger}v}.
	
	Therefore, if \f{w\mdl[X]{R}v}, then there exists \f{w',v'\in Y} such that \f{wfw'\mdl[Y]{R}v'f^{\dagger}v}, and \f{\mdl[X]{R} \subseteq f^{\dagger} \circ\mdl[Y]{R} \circ f}.
\end{proof}

\begin{proposition}\label{prop:fr2}
	\f{f\circ\mdl[X]{R}\subseteq\mdl[Y]{R}\circ f}; \f{w\mdl[X]{R} vfv'} implies \f{wfw'\mdl[Y]{R} v'} for \f{w'\in Y}.
\end{proposition}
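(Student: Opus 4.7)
The plan is to prove the containment $f \circ \mdl[X]{R} \subseteq \mdl[Y]{R} \circ f$ by a direct chase on elements of the composite relations, using the relation-preserving property of the monotone map $f$ that defines morphisms in \cat{Kr}. First I would fix an arbitrary pair $\bp{w,v'}$ in $f \circ \mdl[X]{R}$ and unpack the definition of relation composition to obtain a witness $v \in X$ such that $w \mdl[X]{R} v$ and $v f v'$; since $f$ is a function, $vfv'$ is equivalent to $\funeq{f}{v}{v'}$.

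Next I would exhibit a witness $w' \in Y$ for membership in $\mdl[Y]{R} \circ f$. The natural candidate is $w' = \fun{f}{w}$, which immediately gives $w f w'$. It remains to show $w' \mdl[Y]{R} v'$, and this is precisely where the defining property of a \cat{Kr}-morphism is used: monotonicity says $w\mdl[X]{R}v$ implies $\fun{f}{w}\mdl[Y]{R}\fun{f}{v}$, i.e., $w' \mdl[Y]{R} v'$. Assembling $wfw'$ and $w'\mdl[Y]{R}v'$ via composition yields $\bp{w,v'} \in \mdl[Y]{R} \circ f$, establishing the containment.

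I do not expect any real obstacle here; the statement is essentially the dual of Proposition~\ref{prop:fr} and follows from the same monotonicity condition on $f$, just with the roles of the two arguments swapped. The only point that requires a small amount of care is the bookkeeping between the relational notation $vfv'$ and the functional notation $\funeq{f}{v}{v'}$, and ensuring that the direction of composition matches the convention $\mdl[2]{R}\circ\mdl[1]{R}$ used earlier in the section. Once the witness $w' = \fun{f}{w}$ is chosen, the rest is a one-line appeal to the relation-preservation clause in Definition~\ref{def:krcat}.
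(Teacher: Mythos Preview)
Your proposal is correct and follows essentially the same approach as the paper: both arguments take arbitrary $w,v\in X$ with $w\mdl[X]{R}v$ and $\funeq{f}{v}{v'}$, set $w'=\fun{f}{w}$, and invoke the relation-preserving property of the monotone map $f$ to obtain $w'\mdl[Y]{R}v'$. Your framing as an element-chase on the composite relation is slightly more explicit about the subset structure, but the substance is identical.
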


\begin{proof}\label{pr:fr2}
	Assume \f{w\mdl[X]{R}v} and \f{\funeq{f}{v}{v'}} for arbitrary elements \f{w,v\in X} and \f{v'\in Y}. \f{f} is again a monotone map, in which if \f{w\mdl[X]{R}v}, then \f{\fun{f}{w}\mdl[Y]{R}\fun{f}{v}}. Applying this to our assumptions, we recover that \f{\fun{f}{w}\mdl[Y]{R}\fun{f}{v}}, and since we know that \f{\funeq{f}{v}{v'}}, we may rewrite \f{\fun{f}{w}\mdl[Y]{R}\fun{f}{v}} as \f{\fun{f}{w}\mdl[Y]{R}v'}. Let now \f{w' = \fun{f}{w}}; we then have that \f{w'\mdl[Y]{R}v'}. It follows, then, that we have a \f{w'\in Y} (namely, \f{\fun{f}{w}}) such that \f{\fun{f}{w}\mdl[Y]{R} w'} (again, since \f{w' = \fun{f}{w}}) and \f{w'\mdl[Y]{R}v'}.
	
	Therefore, if \f{w\mdl[X]{R}v} and \f{\funeq{f}{v}{v'}}, then there exists a \f{w'\in Y} (specifically, \f{w' = \fun{f}{w}}) such that \f{\fun{f}{w} \mdl[Y]{R}w'} and \f{w'\mdl[Y]{R}v'}. and \f{f\circ \mdl[X]{R}\subseteq\mdl[Y]{R}\circ f}.
\end{proof}

\f{f\circ\mdl[X]{R}\subseteq\mdl[Y]{R} \circ f} strengthens to a bounded morphism, additionally satisfying \f{\mdl[Y]{R} \circ f\subseteq f\circ \mdl[X]{R}}, in which \f{wfw'\mdl[Y]{R} v'} implies \f{w\mdl[X]{R} vfv'} for some \f{v\in X}, satisfying \f{f\circ\mdl[X]{R} = \mdl[Y]{R}\circ f}. 

\begin{proposition}\label{prop:bound}
	Conditions for \f{f} to be a bounded morphism are:
	
	\begin{enumerate}
		\item  \f{f\circ \mdl[X]{R} \subseteq \mdl[Y]{R}\circ f}.
		\item \f{\mdl[Y]{R} \circ f \subseteq f\circ \mdl[X]{R}}.
	\end{enumerate}
	
	These conditions are equivalent to the single condition: \f{f\circ \mdl[X]{R} = \mdl[Y]{R} \circ f}.
\end{proposition}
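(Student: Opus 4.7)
The plan is to observe that the conjunction of the two inclusions (1) and (2) is simply the antisymmetric unfolding of an equality of sets, so the entire statement reduces to a piece of set-theoretic bookkeeping inside the hom-set $\fun{\cat{Rel}}{X,Y}$. Since we have already equipped $\fun{\cat{Rel}}{X,Y}$ with the subset partial order in the preceding discussion of \cat{Rel} being locally posetal, the equivalence of $\{(1),(2)\}$ with the equation $f\circ\mdl[X]{R}=\mdl[Y]{R}\circ f$ is just antisymmetry of $\subseteq$.

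Concretely, I would run the proof as two one-line directions. For the forward direction, assume (1) and (2); then $f\circ\mdl[X]{R}\subseteq\mdl[Y]{R}\circ f$ and $\mdl[Y]{R}\circ f\subseteq f\circ\mdl[X]{R}$ together with antisymmetry of $\subseteq$ immediately yield $f\circ\mdl[X]{R}=\mdl[Y]{R}\circ f$. For the backward direction, assume the equality; then both (1) and (2) follow trivially, since any set is a subset of itself.

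To connect the relational equation with the usual pointwise description of bounded morphisms, I would briefly unpack each inclusion. Condition (1), i.e.\ $f\circ\mdl[X]{R}\subseteq\mdl[Y]{R}\circ f$, is exactly the \emph{forth} condition $w\mdl[X]{R} v\,f\,v' \Rightarrow \exists w'\in Y\st wfw'\mdl[Y]{R} v'$ already established in Proposition~\ref{prop:fr2}. Condition (2), i.e.\ $\mdl[Y]{R}\circ f\subseteq f\circ\mdl[X]{R}$, is the \emph{back} condition $wfw'\mdl[Y]{R} v' \Rightarrow \exists v\in X\st w\mdl[X]{R} vfv'$; this is the new information that strengthens a monotone map to a bounded morphism and is precisely what gets added on top of the content of Proposition~\ref{prop:fr2}.

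There is no serious obstacle here: the whole argument is antisymmetry of $\subseteq$ applied in $\fun{\cat{Rel}}{X,Y}$, together with the interpretive remark about the back-and-forth reading. If anything counts as the main step, it is phrasing the pointwise back condition as the relational inclusion $\mdl[Y]{R}\circ f\subseteq f\circ\mdl[X]{R}$, which follows directly from the definition of relational composition given earlier in the section.
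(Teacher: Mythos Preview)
Your proposal is correct and takes essentially the same approach as the paper: both arguments amount to the observation that the two mutual inclusions (1) and (2) are equivalent to the equality $f\circ\mdl[X]{R}=\mdl[Y]{R}\circ f$ by antisymmetry of $\subseteq$, together with the pointwise reading of (2) as the back condition. Your framing via the locally posetal structure of \cat{Rel} is a bit crisper than the paper's element-chasing, but the content is the same.
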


\begin{proof}\label{pr:bound}
	We need both that \f{f\circ\mdl[X]{R}\subseteq\mdl[Y]{R}\circ f} (from above; no need to repeat here) and \f{\mdl[Y]{R} \circ f \subseteq f\circ\mdl[X]{R}}. So, for any \f{w \in X} and \f{v' \in Y}, if \f{\fun{f}{w} \mdl[Y]{R}v'}, then there exists \f{v \in X} such that \f{w\mdl[X]{R}v} and \f{\funeq{f}{v}{v'}}. 
	
	Assume, then, that \f{\fun{f}{w} \mdl[Y]{R}v'} for arbitrary elements \f{w\in X} and \f{v'\in Y}. By definition of bounded morphisms, we know that if \f{\fun{f}{w} \mdl[Y]{R}v'}, then there exists \f{v \in X} such that \f{w\mdl[X]{R}v} and \f{vfv'}. Following the results above that \f{f\circ\mdl[X]{R} \subseteq \mdl[Y]{R}\circ f}, it follows that \f{f\circ \mdl[X]{R} = \mdl[Y]{R}\circ f}, and, therefore, a bounded morphism satisfies \f{f\circ \mdl[X]{R} = \mdl[Y]{R}\circ f}.
	
\end{proof}

Now armed with the notion of bounded morphisms, we define a category of Kripke frames whose morphisms are specifically those morphisms.

\begin{definition}[Category \cat{Kr\textsubscript{b}}]\label{def:krb}
	
	\cat{Kr\textsubscript{b}} is a category of the form:\begin{align*}
		\cat{Kr\textsubscript{b}} = \bt{\fun{\ob}{\cat{Kr\textsubscript{b}}},\fun{\cat{Kr\textsubscript{b}}}{X,Y},\circ_\cat{Kr\textsubscript{b}},\id{\cat{Kr\textsubscript{b}}}},
	\end{align*}
	
	whose objects \f{\fun{\ob}{\cat{Kr\textsubscript{b}}}} are Kripke frames, morphisms \f{\fun{\cat{Kr\textsubscript{b}}}{X,Y}} are bounded morphisms between objects that preserve relations, \f{\circ} is the associative composition, and the identity \f{\id{\cat{Kr\textsubscript{b}}}} morphisms are identity functions on frames.
	
\end{definition}

\cat{Kr\textsubscript{b}} is a subcategory of \cat{Kr}; that is, (1) every object in \cat{Kr\textsubscript{b}} must also be an object in \cat{Kr}; (2) any two objects and morphism between them in \cat{Kr\textsubscript{b}} must also be in \cat{Kr}; (3) \f{\circ_\cat{Kr\textsubscript{b}}} is the restriction of \f{\circ_\cat{Kr}}; (4) the identity morphisms in \cat{Kr\textsubscript{b}} are the same as the identity morphisms in \cat{Kr} for each object.

\begin{enumerate}
	\item Both categories have the same objects; namely, Kripke frames, so \f{\fun{\ob}{\cat{Kr\textsubscript{b}}}\subseteq\fun{\ob}{\cat{Kr}}} is trivially satisfied. 
	
	\item \f{\forall X,Y\in\fun{\ob}{\cat{Kr\textsubscript{b}}},\fun{\cat{Kr\textsubscript{b}}}{X,Y}\subseteq\fun{\cat{Kr}}{X,Y}}. Let \f{f \in \fun{\cat{Kr\textsubscript{b}}}{X,Y}} be an arbitrary bounded morphism; then \f{f\in\fun{\cat{Kr}}{X,Y}}. Recall, \f{f} is a bounded morphism if and only if:
	
	\begin{enumerate}
		\item \f{f\circ\mdl[X]{R} \subseteq\mdl[Y]{R} \circ f}.
		\item \f{\mdl[Y]{R} \circ f\subseteq f\circ \mdl[X]{R}}.
	\end{enumerate}
	
	The first condition is exactly the definition of a monotone map that preserves relations; therefore, every bounded morphism is also a monotone map that preserves relations, and it follows that \f{f\in\fun{\cat{Kr}}{X,Y}}. Since \f{f} was arbitrary, we have that \f{} \f{\fun{\cat{Kr\textsubscript{b}}}{X,Y}\subseteq\fun{\cat{Kr}}{X,Y}} for all \f{X, Y \in \fun{\ob}{\cat{Kr\textsubscript{b}}}}.
	
	\item Let \f{f:X\rightarrow Y} and \f{g: Y\rightarrow Z} be bounded morphisms in \cat{Kr\textsubscript{b}}; composition \f{g\circ_\cat{Kr\textsubscript{b}} f:X\rightarrow Z} in \cat{Kr\textsubscript{b}} is defined as: for all \f{x\in X, \funeq{\bp{g\circ_\cat{Kr\textsubscript{b}} f}}{x}{\fun{g}{\fun{f}{x}}}}, which is the same as the composition in \cat{Kr}: for all \f{x\in X, \funeq{\bp{g\circ_\cat{Kr} f}}{x}{\fun{g}{\fun{f}{x}}}}. Therefore, the composition operation in \cat{Kr\textsubscript{b}} is the restriction of the composition in \cat{Kr}.
	
	\item The identity morphism for an object \f{X} in \cat{Kr\textsubscript{b}} is the identity function on \f{X}, and is trivially the same as the identity morphism on \f{X} in \cat{Kr}, and so equivalence is trivially satisfied.
\end{enumerate}

In what follows is the category of intensional models, which is an elaboration on the categories \cat{Kr\textsubscript{b}} and \cat{Set}. 

\section{Category of intensional models}\label{sec:intmod}
Intensional models of the form \f{\mdl[i]{M} = \bt{\bp[\tau\in\mathcal{T}\setminus\mathcal{K}]{\mdl[\tau]{D}},\bp[\tau\in\mathcal{K}]{\mdl[\tau]{F}},\mdl{I}}} are essentially \emph{highly} decorated sets and frames (that is, they are \emph{at least} typed sets and frames, with the additional gadgetry of an interpretation function). This category maps one frame structure to another and leaves the rest of the model intact. \cat{ModInt} is both a categorical description of intensional models and a computationally tractable representation of intensional language processing. Objects are typed intensional models of the form defined in Definition~\ref{def:intenmod}; successive objects in the category are the result of morphisms\footnote{Models whose frames have been acted upon by the category morphisms are notated with primes.} that \qte{trivialize} or \qte{fix} elements of the frames, and are of the form \mdl[i']{M} = \bt{\bp[\tau\in\mathcal{T}\setminus\mathcal{K}]{\mdl[\tau]{D}},\bp[\tau\in\mathcal{K}]{\mdl[\tau]{F}'},\mdl{I}}; identity morphisms leave the models intact.

\begin{definition}[Category of models \cat{ModInt}]\label{def:modint}
	
	\cat{ModInt} is a category of the form:\begin{align*}
		\cat{ModInt} = \bt{\fun{\ob}{\cat{ModInt}},\fun{\cat{ModInt}}{A,B},\circ_\cat{ModInt},\id{\cat{ModInt}}},
	\end{align*}
	
	whose objects \f{\fun{\ob}{\cat{ModInt}}} are intensional models, morphisms \f{\fun{\cat{ModInt}}{A,B}} are surjective functions between objects that operate on the model frames, \f{\circ_\cat{ModInt}} is the associative composition, and the identity \f{\id{\cat{ModInt}}} morphisms are identity functions on frames.	
	
\end{definition}

Intuitively, this is a valid category; we are collecting sets and frames (and an interpretation function) into a category, thereby inheriting the categorical mechanics of sets and frames. The additional gadgetry of the interpretation function is respected when building this category. Since the typed domain of the model remains, and the morphisms act on the typed domains of the frames, we need to show how those typed frames behave as we move from one object to the other. To show that \cat{ModInt} is a valid category, we must satisfy the properties for morphisms and associativity on objects.

\begin{enumerate}
	\item For each object \f{A}, there must be an identity morphism \f{\id{A} : A \rightarrow A}, such that for any morphism \f{f : A \rightarrow B}, \f{f \circ \id{A} = f} and \f{\id{B} \circ f = f}.
	
	\item For any two morphisms \f{f : A \rightarrow B} and \f{g: B \rightarrow C}, there must be a composition morphism \f{g \circ f : A \rightarrow C}, such that composition is associative: \f{\bp{h\circ g}\circ f = h \circ \bp{g \circ f}} for any morphisms \f{f : A \rightarrow B}, \f{g : B \rightarrow C}, and \f{h : C \rightarrow D}.
\end{enumerate}

Consider first the identity morphism on objects in \cat{ModInt}. Morphisms in \cat{ModInt} operate on the frame, leaving the relation intact (though we still notate it with a prime for consistency sake), and affect the domain of elements of the frame; \ie, the morphisms act on the set. Recall: the identity morphism on a set \f{A} is the identity function on \f{A}, denoted by \f{\id{A}} that maps every element of \f{A} to itself: \funeq{\id{A}}{x}{x} for all \f{x\in A}; the identity morphism on \mdl[i]{M} is the identity function on the domain of elements of the frame.

Before we proceed, a note on notation. Because the objects of \cat{ModInt} are typed, the morphism \bp[\tau\in\mathcal{K}]{\mph[\tau]{f}} is indexed according to types \f{\tau\in\mathcal{K}}. This notation, though explicit, is cumbersome; therefore, \bp[\tau\in\mathcal{K}]{\mph[\tau]{f}} will be written as \mph{f} when applied to an argument, and will be written \bp[\tau\in\mathcal{K}]{\mph[\tau]{f}} otherwise. Likewise, the identity morphism is indexed according to types; so to preserve legibility, while \f{\id{\bp[\tau\in\mathcal{K}]{\mph[\tau]{F}}}} explicitily indicates that the identity morphism operates on the frame, which is itself indexed according to types, it will be written as \f{\id{\mdl{F}}} in the calculations below.

\begin{align}
	\bp{f\circ\id{\mdl{F}}}\mdl[i]{M} & = \fun{f}{\fun{\id{\mdl{F}}}{\mdl[i]{M}}}\nonumber\\
	& = \fun{f}{\fun{\id{\mdl{F}}}{\bt{\bp[\tau\in\mathcal{T}\setminus\mathcal{K}]{\mdl[\tau]{D}},\bp[\tau\in\mathcal{K}]{\mdl[\tau]{F}},\mdl{I}}}} \nonumber\\
	& = \fun{f}{\bt{\bp[\tau\in\mathcal{T}\setminus\mathcal{K}]{\mdl[\tau]{D}},\fun{\id{{\mathcal{F}}}}{\bp[\tau\in\mathcal{K}]{\mdl[\tau]{F}}},\mdl{I}}} \nonumber\\
	& = \fun{f}{\bt{\bp[\tau\in\mathcal{T}\setminus\mathcal{K}]{\mdl[\tau]{D}},\fun{\id{{\mathcal{F}}}}{\bp{\bp[\tau\in\mathcal{K}]{\mdl[\tau]{D}},\bp[\tau\in\mathcal{K}]{\mdl[\tau]{R}}}},\mdl{I}}} \nonumber\\
	& = \fun{f}{\bt{\bp[\tau\in\mathcal{T}\setminus\mathcal{K}]{\mdl[\tau]{D}},\bp{\bp[\tau\in\mathcal{K}]{\mdl[\tau]{D}},\bp[\tau\in\mathcal{K}]{\mdl[\tau]{R}}},\mdl{I}}} \nonumber\\
	& = \bt{\bp[\tau\in\mathcal{T}\setminus\mathcal{K}]{\mdl[\tau]{D}},\fun{f}{\bp{\bp[\tau\in\mathcal{K}]{\mdl[\tau]{D}},\bp[\tau\in\mathcal{K}]{\mdl[\tau]{R}}}},\mdl{I}} \nonumber\\
	& = \bt{\bp[\tau\in\mathcal{T}\setminus\mathcal{K}]{\mdl[\tau]{D}},\bp{\bp[\tau\in\mathcal{K}]{\mdl[\tau]{D}'},\bp[\tau\in\mathcal{K}]{\mdl[\tau]{R}'}},\mdl{I}} \nonumber\\
	& = \bt{\bp[\tau\in\mathcal{T}\setminus\mathcal{K}]{\mdl[\tau]{D}},\bp[\tau\in\mathcal{K}]{\mdl[\tau]{K}'},\mdl{I}} \nonumber\\
	& = \mdl[i']{M}.
\end{align}

\begin{align}
	\bp{\id{\mdl{F}'}\circ f}\mdl[i]{M}& = \fun{\id{\mdl{F}'}}{\fun{f}{\mdl[i]{M}}} \nonumber\\
	& =  \fun{\id{\mdl{F}'}}{\fun{f}{\bt{\bp[\tau\in\mathcal{T}\setminus\mathcal{K}]{\mdl[\tau]{D}},\bp[\tau\in\mathcal{K}]{\mdl[\tau]{F}},\mdl{I}}}} \nonumber\\
	& =  \fun{\id{\mdl{F}'}}{\bt{\bp[\tau\in\mathcal{T}\setminus\mathcal{K}]{\mdl[\tau]{D}},\fun{f}{\bp[\tau\in\mathcal{K}]{\mdl[\tau]{F}}},\mdl{I}}} \nonumber\\
	& =  \fun{\id{\mdl{F}'}}{\bt{\bp[\tau\in\mathcal{T}\setminus\mathcal{K}]{\mdl[\tau]{D}},\fun{f}{\bp{\bp[\tau\in\mathcal{K}]{\mdl[\tau]{D}},\bp[\tau\in\mathcal{K}]{\mdl[\tau]{R}}}},\mdl{I}}} \nonumber\\
	& =  \fun{\id{\mdl{F}'}}{\bt{\bp[\tau\in\mathcal{T}\setminus\mathcal{K}]{\mdl[\tau]{D}},\bp{\bp[\tau\in\mathcal{K}]{\mdl[\tau]{D}'},\bp[\tau\in\mathcal{K}]{\mdl[\tau]{R}'}},\mdl{I}}} \nonumber\\
	& =  {\bt{\bp[\tau\in\mathcal{T}\setminus\mathcal{K}]{\mdl[\tau]{D}},\fun{\id{\mdl{F}'}}{\bp{\bp[\tau\in\mathcal{K}]{\mdl[\tau]{D}'},\bp[\tau\in\mathcal{K}]{\mdl[\tau]{R}'}}},\mdl{I}}} \nonumber\\
	& =  {\bt{\bp[\tau\in\mathcal{T}\setminus\mathcal{K}]{\mdl[\tau]{D}},\bp{\bp[\tau\in\mathcal{K}]{\mdl[\tau]{D}'},\bp[\tau\in\mathcal{K}]{\mdl[\tau]{R}'}},\mdl{I}}} \nonumber\\
	& =  {\bt{\bp[\tau\in\mathcal{T}\setminus\mathcal{K}]{\mdl[\tau]{D}},\bp[\tau\in\mathcal{K}]{\mdl[\tau]{F}'},\mdl{I}}} \nonumber\\
	& = \mdl[i']{M}.
\end{align}

Consider now morphisms between objects in \cat{ModInt}. Given a frame of the form \bp[\tau\in\mathcal{K}]{\mdl[\tau]{F}} = \bt{\bp[\tau\in\mathcal{K}]{\mdl[\tau]{D}},\bp[\tau\in\mathcal{K}]{\mdl[\tau]{R}}}, where \bp[\tau\in\mathcal{K}]{\mdl[\tau]{D}} is a set of elements and \bp[\tau\in\mathcal{K}]{\mdl[\tau]{R}} is a relation on those elements, we define the morphism \f{\bp[\tau\in\mathcal{K}]{\mph[\tau]{f}}:\bp[\tau\in\mathcal{K}]{\mdl[\tau]{F}}\rightarrow\bp[\tau\in\mathcal{K}]{\mdl[\tau]{F}'}} that maps the frame \bp[\tau\in\mathcal{K}]{\mdl[\tau]{F}} = \bt{\bp[\tau\in\mathcal{K}]{\mdl[\tau]{D}},\bp[\tau\in\mathcal{K}]{\mdl[\tau]{R}}} to the trivial frame \bp[\tau\in\mathcal{K}]{\mdl[\tau]{F}'} = \bt{\bp[\tau\in\mathcal{K}]{\mdl[\tau]{D}'},\bp[\tau\in\mathcal{K}]{\mdl[\tau]{R}'}}\footnote{Technically, the morphism does not act on the relation of the frame, but on the domain; notating the relation with a prime, however, does indicate that the relations co-indexed with the domain should be checked with the new domain, however, which we will see below.}.

To show the composition of morphisms, we will illustrate with two models; the first is a general frame structure, the second has two specific frame structures.
\begin{itemize}
	\item \f{\mdl[i]{M} = \bt{\bp[\tau\in\mathcal{T}\setminus\mathcal{K}]{\mdl[\tau]{D}},\bp[\tau\in\mathcal{K}]{\mdl[\tau]{F}},\mdl{I}}}.
	\item \f{\mdl[i]{M} = \bt{\bp[\tau\in\mathcal{T}\setminus\mathcal{K}]{\mdl[\tau]{D}},\mdl[\tau_1]{F},\mdl[\tau_2]{F},\mdl{I}}}.
\end{itemize}

The action of the morphism on the general frame structure is as follows:

\begin{align}
	\fun{f}{\mdl[i]{M}} & = \fun{f}{\bt{\bp[\tau\in\mathcal{T}\setminus\mathcal{K}]{\mdl[\tau]{D}},\bp[\tau\in\mathcal{K}]{\mdl[\tau]{F}},\mdl{I}}}  \nonumber\\
	& = \bt{\bp[\tau\in\mathcal{T}\setminus\mathcal{K}]{\mdl[\tau]{D}},\fun{f}{\bp[\tau\in\mathcal{K}]{\mdl[\tau]{F}}},\mdl{I}}  \nonumber\\
	& = \bt{\bp[\tau\in\mathcal{T}\setminus\mathcal{K}]{\mdl[\tau]{D}},\fun{f}{\bp{\bp[\tau\in\mathcal{K}]{\mdl[\tau]{D}},\bp[\tau\in\mathcal{K}]{\mdl[\tau]{R}}}},\mdl{I}} \nonumber\\
	& = \bt{\bp[\tau\in\mathcal{T}\setminus\mathcal{K}]{\mdl[\tau]{D}},\bp{\bp[\tau\in\mathcal{K}]{\mdl[\tau]{D}'},\bp[\tau\in\mathcal{K}]{\mdl[\tau]{R}'}},\mdl{I}} \nonumber\\
	& = \bt{\bp[\tau\in\mathcal{T}\setminus\mathcal{K}]{\mdl[\tau]{D}},\bp[\tau\in\mathcal{K}]{\mdl[\tau]{F}'},\mdl{I}} \nonumber\\
	& = \mdl[i']{M}.
\end{align}

\ref{fig:catmodintgen} is a commutative diagram representation of \cat{ModInt}, with identity morphisms on the frame elements of intensional models implied and the morphism between models shown.

\begin{figure}[H]
	\[\begin{tikzcd}
		& \cat{ModInt} \\
		\mdl[i]{M} && \mdl[i']{M}
		\arrow["\f{\id{\bp[\tau\in\mathcal{K}]{\mph[\tau]{F}}}} ", from=2-1, to=2-1, loop, in=55, out=125, distance=10mm]
		\arrow["{\bp[\tau\in\mathcal{K}]{\mph[\tau]{f}}}", from=2-1, to=2-3]
		\arrow["\f{\id{\bp[\tau\in\mathcal{K}]{\mph[\tau]{F}'}}} ", from=2-3, to=2-3, loop, in=55, out=125, distance=10mm]
		\end{tikzcd}\]
	\caption{Category of intensional models, generalized}\label{fig:catmodintgen}
\end{figure}

The commutative diagram for the general case in \ref{fig:catmodintgen} is a highly compactified representation; if, instead, we consider a model with two frames indexed as \f{\tau_1,\tau_2\in\mathcal{K}} (which, as we will see in Subsection~\ref{subsec:execat}, might as well be modal and temporal frames) the commutative diagram has the form in \ref{fig:catmodintgen2}, from which it is seen that the composition of morphisms appropriately indexed with types \f{\tau_1,\tau_2\in\mathcal{K}} is, indeed, associative.

\begin{figure}[H]
	\[\begin{tikzcd}
		& \cat{ModInt} \\
		\bt{\bp[\tau\in\mathcal{T}\setminus\mathcal{K}]{\mdl[\tau]{D}},\mdl[\tau_1]{F},\mdl[\tau_2]{F},\mdl{I}}&& \bt{\bp[\tau\in\mathcal{T}\setminus\mathcal{K}]{\mdl[\tau]{D}},\mdl[\tau_1]{F},\mdl[\tau_2]{F}',\mdl{I}} \\
		\\
		\bt{\bp[\tau\in\mathcal{T}\setminus\mathcal{K}]{\mdl[\tau]{D}},\mdl[\tau_1]{F}',\mdl[\tau_2]{F},\mdl{I}}&& \bt{\bp[\tau\in\mathcal{T}\setminus\mathcal{K}]{\mdl[\tau]{D}},\mdl[\tau_1]{F}',\mdl[\tau_2]{F}',\mdl{I}}
		\arrow["{\mph[\tau_2]{f}}", from=2-1, to=2-3]
		\arrow["{\mph[\tau_1]{f}}"', from=2-1, to=4-1]
		\arrow["{\mph[\tau_1]{f}}", from=2-3, to=4-3]
		\arrow["{\mph[\tau_2]{f}}"', from=4-1, to=4-3]
		\end{tikzcd}\]
	\caption{Category of intensional models, two specified frames}\label{fig:catmodintgen2}
\end{figure}

We have not yet shown what it is exactly \bp[\tau\in\mathcal{K}]{\mph[\tau]{f}} does to the frame. Let \f{k_0\in \bp[\tau\in\mathcal{K}]{\mdl[\tau]{D}'}} be the single element (the trivial element) in the set \bp[\tau\in\mathcal{K}]{\mdl[\tau]{D}'}. The mapping rule is: \f{\forall k_i \in \bp[\tau\in\mathcal{K}]{\mdl[\tau]{D}}, \funeq{f}{k_i}{k_0}}, in which every element \f{k\in \bp[\tau\in\mathcal{K}]{\mdl[\tau]{D}}} is mapped to the single element \f{k_0 \in \bp[\tau\in\mathcal{K}]{\mdl[\tau]{D}'}}.  \bp[\tau\in\mathcal{K}]{\mph[\tau]{f}} is a surjective function: every element in the codomain \ran{\bp[\tau\in\mathcal{K}]{\mdl[\tau]{D}'}} (a singleton set) is mapped to by at least one element in the domain \dom{\bp[\tau\in\mathcal{K}]{\mdl[\tau]{D}}}.

What does \bp[\tau\in\mathcal{K}]{\mph[\tau]{f}} do to the binary relation on the frame? As we have stated, the morphism leaves the relation intact; it remains to be shown how the relation now acts on trivial elements after \bp[\tau\in\mathcal{K}]{\mph[\tau]{f}}; what follows is essentially that it is a bounded morphism: \f{\forall k_i,k_j\in \bp[\tau\in\mathcal{K}]{\mdl[\tau]{D}}}, if \f{\bp{k_i,k_j}\in\bp[\tau\in\mathcal{K}]{\mdl[\tau]{R}}}, then \f{\bp{\fun{f}{k_i},\fun{f}{k_j}}\in\bp[\tau\in\mathcal{K}]{\mdl[\tau]{R}'}}, and \f{k_i\bp[\tau\in\mathcal{K}]{\mdl[\tau]{R}} k_i} is given as \f{\fun{f}{k_i}\bp[\tau\in\mathcal{K}]{\mdl[\tau]{R}'}\fun{f}{k_j}}. 

In the case of the trivial frame, since there is only one element \f{k_0} in \f{\bp[\tau\in\mathcal{K}]{\mdl[\tau]{D}'}}, the binary relation \bp[\tau\in\mathcal{K}]{\mdl[\tau]{R}'} is \f{k_0 \bp[\tau\in\mathcal{K}]{\mdl[\tau]{R}'} k_0}; the single element \f{k_0} in \bp[\tau\in\mathcal{K}]{\mdl[\tau]{D}'} is related to itself under the relation. This lets us relate frames without having to negate or append additional relations on \bp[\tau\in\mathcal{K}]{\mdl[\tau]{R}'}; we can keep whatever are the relations as much as we want, since we only affect the elements in \bp[\tau\in\mathcal{K}]{\mdl[\tau]{D}}. The bounded morphism satisfies the forth and back conditions.

\begin{proposition}\label{prop:forth}
	\f{\forall k_i,k_j\in \bp[\tau\in\mathcal{K}]{\mdl[\tau]{D}}}, if \f{\bp{k_i,k_j}\in\bp[\tau\in\mathcal{K}]{\mdl[\tau]{R}}}, then \f{\bp{\fun{f}{k_i},\fun{f}{k_j}}\in\bp[\tau\in\mathcal{K}]{\mdl[\tau]{R}'}} satisfies the \term{forth condition} (zig condition); the function \bp[\tau\in\mathcal{K}]{\mph[\tau]{f}} preserves the accessibility relation(s) going from \bp[\tau\in\mathcal{K}]{\mdl[\tau]{D}} to \bp[\tau\in\mathcal{K}]{\mdl[\tau]{D}'}. 
\end{proposition}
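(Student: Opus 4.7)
The plan is to leverage the very specific structure of the trivial target frame defined just above the proposition: the codomain $\bp[\tau\in\mathcal{K}]{\mdl[\tau]{D}'}$ is a singleton $\bc{k_0}$, and the relation $\bp[\tau\in\mathcal{K}]{\mdl[\tau]{R}'}$ is stipulated to contain the sole pair $\bp{k_0, k_0}$. This collapses what would normally be a substantive preservation condition into a near-tautology, so the proof is essentially a direct verification.

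The key steps I would carry out, in order, are: first, fix arbitrary $k_i, k_j \in \bp[\tau\in\mathcal{K}]{\mdl[\tau]{D}}$ and assume the hypothesis $\bp{k_i, k_j} \in \bp[\tau\in\mathcal{K}]{\mdl[\tau]{R}}$; second, invoke the mapping rule $\forall k \in \bp[\tau\in\mathcal{K}]{\mdl[\tau]{D}}, \funeq{f}{k}{k_0}$ to conclude that $\funeq{f}{k_i}{k_0}$ and $\funeq{f}{k_j}{k_0}$, so that $\bp{\fun{f}{k_i}, \fun{f}{k_j}} = \bp{k_0, k_0}$; third, appeal to the stipulated relation on the trivial frame, namely $k_0 \bp[\tau\in\mathcal{K}]{\mdl[\tau]{R}'} k_0$, to conclude that $\bp{k_0, k_0} \in \bp[\tau\in\mathcal{K}]{\mdl[\tau]{R}'}$; and fourth, chain these equalities/memberships to obtain $\bp{\fun{f}{k_i}, \fun{f}{k_j}} \in \bp[\tau\in\mathcal{K}]{\mdl[\tau]{R}'}$, which is exactly the forth condition.

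Since the argument is indexed over $\tau \in \mathcal{K}$, I would remark that the reasoning applies component-wise: for each type $\tau$, the morphism $\mph[\tau]{f}$ sends every element of $\mdl[\tau]{D}$ to the unique element of $\mdl[\tau]{D}'$, and $\mdl[\tau]{R}'$ contains the reflexive pair on that element. The universal quantification over $k_i, k_j$ then discharges the implication for all relevant pairs.

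The main obstacle, such as it is, is purely notational rather than mathematical: keeping the indexed family notation $\bp[\tau\in\mathcal{K}]{\cdot}$ straight while making clear that the argument is uniform in $\tau$. There is no real logical difficulty because the trivialization of the codomain frame removes any possibility of failure; the relation $\bp[\tau\in\mathcal{K}]{\mdl[\tau]{R}'}$ is designed precisely so that the unique candidate image pair lies in it. The proof is therefore short and consists essentially of unfolding the definitions of $f$ and of the trivial frame established immediately before the proposition.
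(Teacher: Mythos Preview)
Your proposal is correct and follows essentially the same approach as the paper's own proof: fix arbitrary \f{k_i,k_j} satisfying the hypothesis, use the mapping rule to collapse both images to \f{k_0}, and then invoke the stipulated relation \f{k_0\bp[\tau\in\mathcal{K}]{\mdl[\tau]{R}'}k_0} on the trivial frame to conclude. Your additional remark about the argument being uniform component-wise in \f{\tau\in\mathcal{K}} is a helpful clarification the paper leaves implicit, but the logical skeleton is identical.
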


\begin{proof}\label{pr:forth}
	Let \f{k_i, k_j \in \bp[\tau\in\mathcal{K}]{\mdl[\tau]{D}}} be arbitrary elements such that \f{\bp{k_i,k_j}\in\bp[\tau\in\mathcal{K}]{\mdl[\tau]{R}}}. By definition of \bp[\tau\in\mathcal{K}]{\mph[\tau]{f}}, we have \funeq{f}{k_i}{k_0} and \funeq{f}{k_j}{k_0}, where \f{k_0\in\bp[\tau\in\mathcal{K}]{\mdl[\tau]{D}'}}, and the tuple \f{\bp{\fun{f}{k_i},\fun{f}{k_j}} = \bp{k_0,k_0}}. By definition, the binary relation of the trivial frame is \f{k_0 \bp[\tau\in\mathcal{K}]{\mdl[\tau]{R}'} k_0} for \f{\bp{k_0,k_0}\in\bp[\tau\in\mathcal{K}]{\mdl[\tau]{D}'}}.  Therefore, \f{\bp{\fun{f}{k_i},\fun{f}{k_j}} = \bp{k_0,k_0}\in\bp[\tau\in\mathcal{K}]{\mdl[\tau]{R}'}}, and so for any \f{k_i,k_j\in\bp[\tau\in\mathcal{K}]{\mdl[\tau]{D}}}, if \f{\bp{k_i,k_j}\in\bp[\tau\in\mathcal{K}]{\mdl[\tau]{R}}}, then \f{\bp{\fun{f}{k_i},\fun{f}{k_j}}\in\bp[\tau\in\mathcal{K}]{\mdl[\tau]{R}'}}.
	
	Therefore, \bp[\tau\in\mathcal{K}]{\mph[\tau]{f}} preserves accessibility relations on elements of and going from \bp[\tau\in\mathcal{K}]{\mdl[\tau]{D}} to \bp[\tau\in\mathcal{K}]{\mdl[\tau]{D}'}.
	
\end{proof}

\begin{remark}\label{rmk:forth}
	When we map to the trivial frame, \emph{any} and \emph{every} pair of elements in \bp[\tau\in\mathcal{K}]{\mdl[\tau]{D}} will satisfy the forth condition trivially, since \f{\bp{\fun{f}{k_i},\fun{f}{k_j}}=\bp{k_0,k_0}} will always belong to \bp[\tau\in\mathcal{K}]{\mdl[\tau]{R}'}.
\end{remark}

\begin{proposition}\label{prop:back}
	Let \f{k_i \in \bp[\tau\in\mathcal{K}]{\mdl[\tau]{D}}} and \f{\forall k'_i \in \bp[\tau\in\mathcal{K}]{\mdl[\tau]{D}'}} be arbitrary elements in their respective domains; if \f{\bp{\fun{f}{k_i},k'_j}\in\bp[\tau\in\mathcal{K}]{\mdl[\tau]{R}'}}, then there exists \f{ k_j\in \bp[\tau\in\mathcal{K}]{\mdl[\tau]{D}}} such that \f{\bp{k_i,k_j}\in\bp[\tau\in\mathcal{K}]{\mdl[\tau]{R}}\et\funeq{f}{k_j}{k'_j}} satisfies the \term{back condition} (zag condition); the function \bp[\tau\in\mathcal{K}]{\mph[\tau]{f}} preserves the accessibility relation(s) going from \bp[\tau\in\mathcal{K}]{\mdl[\tau]{D}'} to \bp[\tau\in\mathcal{K}]{\mdl[\tau]{D}}. 
\end{proposition}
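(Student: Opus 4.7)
The plan is to mirror the proof of Proposition~\ref{prop:forth}, exploiting the fact that the trivial codomain $\bp[\tau\in\mathcal{K}]{\mdl[\tau]{D}'}$ contains a single element $k_0$ and the fact that $\bp[\tau\in\mathcal{K}]{\mph[\tau]{f}}$ collapses all of $\bp[\tau\in\mathcal{K}]{\mdl[\tau]{D}}$ onto $k_0$. First I would unpack the hypothesis: since $\bp[\tau\in\mathcal{K}]{\mdl[\tau]{D}'}=\bc{k_0}$, the element $k'_j$ in the hypothesis must equal $k_0$, and by the definition of $\bp[\tau\in\mathcal{K}]{\mph[\tau]{f}}$ we have $\fun{f}{k_i}=k_0$. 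Thus $\bp{\fun{f}{k_i},k'_j}\in\bp[\tau\in\mathcal{K}]{\mdl[\tau]{R}'}$ reduces to $\bp{k_0,k_0}\in\bp[\tau\in\mathcal{K}]{\mdl[\tau]{R}'}$, which is the sole pair in the trivial relation and therefore holds automatically.

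Next I would exhibit the witness $k_j$. The half of the conclusion asking for $\funeq{f}{k_j}{k'_j}$ becomes $\funeq{f}{k_j}{k_0}$, and this is satisfied by \emph{any} element of $\bp[\tau\in\mathcal{K}]{\mdl[\tau]{D}}$ because $\bp[\tau\in\mathcal{K}]{\mph[\tau]{f}}$ sends every source element to $k_0$. The remaining requirement is purely internal to the source frame: we need $\bp{k_i,k_j}\in\bp[\tau\in\mathcal{K}]{\mdl[\tau]{R}}$ for some $k_j\in\bp[\tau\in\mathcal{K}]{\mdl[\tau]{D}}$.

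The main obstacle is precisely this last existential. Unlike the forth condition, which goes through automatically because the trivial relation is reflexive at $k_0$, the back condition genuinely depends on the shape of $\bp[\tau\in\mathcal{K}]{\mdl[\tau]{R}}$: if $k_i$ admits no $\bp[\tau\in\mathcal{K}]{\mdl[\tau]{R}}$-successor whatsoever, no such $k_j$ can exist. The natural hypothesis to invoke is that $\bp[\tau\in\mathcal{K}]{\mdl[\tau]{R}}$ is \term{serial} in the sense of Definition~\ref{def:intenmod}, which guarantees, for every $k_i$, the existence of some $k_j$ with $k_i\bp[\tau\in\mathcal{K}]{\mdl[\tau]{R}}k_j$; under the stronger hypothesis of reflexivity we may simply take $k_j=k_i$. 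Picking any such $k_j$ as the witness yields $\bp{k_i,k_j}\in\bp[\tau\in\mathcal{K}]{\mdl[\tau]{R}}$ with $\funeq{f}{k_j}{k'_j}$, completing the back condition. Together with Proposition~\ref{prop:forth}, this matches the equality $f\circ\mdl[X]{R}=\mdl[Y]{R}\circ f$ of Proposition~\ref{prop:bound}, so that $\bp[\tau\in\mathcal{K}]{\mph[\tau]{f}}$ qualifies as a bounded morphism from the original frame to the trivial one.
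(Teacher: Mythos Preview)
Your approach is essentially the paper's: reduce the hypothesis to \f{\bp{k_0,k_0}\in\bp[\tau\in\mathcal{K}]{\mdl[\tau]{R}'}} using the triviality of the target frame, then exhibit a witness \f{k_j} whose image under \f{f} is \f{k_0}. Where you differ is at the crucial existential step. The paper simply picks an arbitrary \f{k_j\in\bp[\tau\in\mathcal{K}]{\mdl[\tau]{D}}} and asserts that \f{\bp{k_i,k_j}\in\bp[\tau\in\mathcal{K}]{\mdl[\tau]{R}}} follows merely from \f{k_i,k_j} lying in the domain and \f{\bp[\tau\in\mathcal{K}]{\mdl[\tau]{R}}} being a binary relation on that domain; this inference is not valid for an arbitrary relation. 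You have correctly identified that something like seriality (or reflexivity) of the source relation is required for the back condition to go through, and you supply the witness accordingly. In short, your argument is the same shape as the paper's but patches a genuine lacuna in it by making the needed hypothesis explicit.
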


\begin{proof}\label{pr:back}
	Let \f{k_i \in \bp[\tau\in\mathcal{K}]{\mdl[\tau]{D}}} and \f{k'_j \in \bp[\tau\in\mathcal{K}]{\mdl[\tau]{D}'}} be arbitrary elements such that \f{\bp{\fun{f}{k_i},k'_j}\in\bp[\tau\in\mathcal{K}]{\mdl[\tau]{R}'}}. By deifnition of the trivial frame,the binary relation of the trivial frame is \f{k_0 \bp[\tau\in\mathcal{K}]{\mdl[\tau]{R}'} k_0} for \f{\bp{k_0,k_0}\in\bp[\tau\in\mathcal{K}]{\mdl[\tau]{D}'}}; since  \f{k_0 \bp[\tau\in\mathcal{K}]{\mdl[\tau]{R}'} k_0}, the only way for \bp{\fun{f}{k_i},k'_j} to be in \bp[\tau\in\mathcal{K}]{\mdl[\tau]{R}'} is if \funeq{f}{k_i}{k_0} and \f{k'_j=k_0}.
	
	Let \f{k_j} be any element in \bp[\tau\in\mathcal{K}]{\mdl[\tau]{D}}. Since both \f{k_i} and \f{k_j} are elements of \bp[\tau\in\mathcal{K}]{\mdl[\tau]{D}}, and \bp[\tau\in\mathcal{K}]{\mdl[\tau]{R}} is a binary relation on \bp[\tau\in\mathcal{K}]{\mdl[\tau]{D}},  it follows that \bp{k_i,k_j} are in \bp[\tau\in\mathcal{K}]{\mdl[\tau]{R}} And by definition of \bp[\tau\in\mathcal{K}]{\mph[\tau]{f}}, \funeq{f}{k_j}{k_0}, we know that \f{k'_j=k_0}, and, therefore, \funeq{f}{k_j}{k'_j}. 
	
	Therefore, for any \f{k_i\in\bp[\tau\in\mathcal{K}]{\mdl[\tau]{D}}} and \f{k_j\in\bp[\tau\in\mathcal{K}]{\mdl[\tau]{D}'}}, if \f{\bp{\fun{f}{k_i},k'_j}\in\bp[\tau\in\mathcal{K}]{\mdl[\tau]{R}'}}, then there exists \f{k_j\in\bp[\tau\in\mathcal{K}]{\mdl[\tau]{D}}} chosen arbitrarily such that \f{\bp{k_i,k_j}\bp[\tau\in\mathcal{K}]{\mdl[\tau]{R}}}.
\end{proof}

\begin{remark}\label{rmk:back}
	While back condition is respected here, we do not need it in our work here, since we are only moving in one direction; when working with the trivial frame, this is automatically satisfied anyways, since there is only one element in \bp[\tau\in\mathcal{K}]{\mdl[\tau]{D}'}: for any \bp{\fun{f}{k_i},k'_j} in \bp[\tau\in\mathcal{K}]{\mdl[\tau]{R}'}, where \f{k'_j=k_0}, the only necessary \f{k_j} in \bp[\tau\in\mathcal{K}]{\mdl[\tau]{D}} is any element, since \f{f} maps all elements to \f{k_0}.
\end{remark}

We have now shown that \cat{ModInt} is, indeed, a category by defining the objects in \cat{ModInt} (models), the morphisms between objects (bounded morphisms on the frames), and the identity morphisms of objects (identities on the frames).

\subsection{Example models}\label{subsec:execat}

In principle, there is no limit to the number of frames in an intensional model; for illustrative purposes, it may help to specify models with linguistically and logically defined frames labeled from \bc{W,T} corresponding to world and tense, respectively. The world frame is a model for modal logic, in which the domain of the frame is a set of worlds and the relation is a an accessibility relation on worlds; the tense frame is a model for tense logic, in which the domain of the frame is a set of times and the relation is an accessibility relation on times.

\begin{itemize}
	\item Modal intensional model triple:\begin{align*}
		\mdl[W]{M}& = \bt{\bp[\tau\in\mathcal{T}\setminus\mathcal{K}]{\mdl[\tau]{D}},\mdl[W]{F},\mdl{I}} \\
		& = \bt{\bp[\tau\in\mathcal{T}\setminus\mathcal{K}]{\mdl[\tau]{D}},\bp{\mdl[W]{D},\mdl[W]{R}},\mdl{I}}.
	\end{align*}
	
	\item Tense intensional model triple:\begin{align*}
		\mdl[T]{M}& = \bt{\bp[\tau\in\mathcal{T}\setminus\mathcal{K}]{\mdl[\tau]{D}},\mdl[T]{F},\mdl{I}} \\
		& = \bt{\bp[\tau\in\mathcal{T}\setminus\mathcal{K}]{\mdl[\tau]{D}},\bp{\mdl[T]{D},\mdl[T]{R}},\mdl{I}}.
	\end{align*}
	
	\item Modal, tense intensional model quadruple:\begin{align*}
		\mdl[W,T]{M}& = \bt{\bp[\tau\in\mathcal{T}\setminus\mathcal{K}]{\mdl[\tau]{D}},\mdl[W]{F},\mdl[T]{F},\mdl{I}} \\
		& = \bt{\bp[\tau\in\mathcal{T}\setminus\mathcal{K}]{\mdl[\tau]{D}},\bp{\mdl[W]{D},\mdl[W]{R}},\bp{\mdl[T]{D},\mdl[T]{R}},\mdl{I}}.
	\end{align*}
\end{itemize}

The morphism \bp[\tau\in\mathcal{K}]{\mph[\tau]{f}} between models is a typed function that acts on each of the frames in \mdl[i]{M}; for an application to models for worlds and tense,  \bp[\tau\in\mathcal{K}]{\mph[\tau]{f}} has components \bc{W,T}.

\begin{itemize}
	\item \f{\mph[W]{f}:\mph[W]{F}\rightarrow\mph[W]{F}'} operates on the modal frame \f{\mdl[W]{F} = \bt{\mdl[W]{D},\mdl[W]{R}}} to give a trivial modal frame \f{\mdl[W]{F}' = \bt{\mdl[W]{D}',\mdl[W]{R}'}}, in which \f{\mdl[W]{D}'} has one element and \f{\mdl[W]{R}'} is a relation on that one element.
	
	\item \f{\mph[T]{f}:\mph[T]{F}\rightarrow\mph[T]{F}'} operates on the temporal frame \f{\mdl[T]{F} = \bt{\mdl[T]{D},\mdl[T]{R}}} to give a trivial temporal frame \f{\mdl[T]{F}' = \bt{\mdl[T]{D}',\mdl[T]{R}'}}, in which \f{\mdl[T]{D}'} has one element and \f{\mdl[T]{R}'} is a relation on that one element.
	
\end{itemize}

The commutative diagram for \bc{W,T} frames is shown in \ref{fig:catmodintspec} (note that arrows representing the identity morphisms are implied).

\begin{figure}[H]
	\[\begin{tikzcd}
		& \cat{ModInt} \\
		\mdl[W,T]{M} && \mdl[W,T']{M} \\
		\\
		\mdl[W',T]{M}&& \mdl[W',T']{M}
		\arrow["{\mph[T]{f}}",from=2-1, to=2-3]
		\arrow["{\mph[W]{f}}"',from=2-1, to=4-1]
		\arrow["{\mph[W]{f}}",from=2-3, to=4-3]
		\arrow["{\mph[T]{f}}"',from=4-1, to=4-3]
	\end{tikzcd}\]
	\caption{Category of intensional models, specified}\label{fig:catmodintspec}
\end{figure}
	
\subsection{Fully trivial intension}

An interesting consequence of this is the following: a fully trivial intensional model is equivalent to an extensional model; that is, we have that \mdl[i']{M} is \mdl[x]{M}, which brings the extensional model into a category of intensional models for a consistent representation of semantics. In the category of intensional models, specified, in \ref{fig:catmodintspec}, the object \f{\mdl[W',T']{M}=\mdl[x]{M}}. 

\begin{theorem}\label{thm:equiv}
	A fully trivial intensional model is equivalent to an extensional model.
\end{theorem}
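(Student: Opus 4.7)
The plan is to show that after iteratively trivializing every Kripke frame in the intensional model, the interpretation and denotation apparatus collapse onto those of an extensional model, and the remaining data are literally those of an extensional model up to dropping a now-vacuous component. First I would observe that, after applying $\mph[\tau]{f}$ for every $\tau \in \mathcal{K}$, each trivialized frame $\mdl[\tau]{F}'$ has its domain $\mdl[\tau]{D}'$ equal to a singleton by construction, so the index product $\prod_{\tau \in \mathcal{K}} \mdl[\tau]{D}'$ is itself a singleton, which I will call $\bc{s_0}$. I would then invoke the elementary set-theoretic fact that for any target set $X$, functions $\bc{s_0} \rightarrow X$ are in canonical bijection with elements of $X$ via evaluation at $s_0$.

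Next I would apply this bijection clause-by-clause to the intensional interpretation function $\mdl{I}$ of Definition~\ref{def:intenmod}. For each non-logical constant $c_\tau$, the value $\funt{\mdl{I}}{c_\tau}{s_0}$ picks out a single element of $\mdl[\tau]{D}$; for each $n$-ary predicate $P$, $\funt{\mdl{I}}{P}{s_0}$ picks out a single subset of $\mdl[e]{D}^n$; for each $n$-ary function symbol $f$, $\funt{\mdl{I}}{f}{s_0}$ picks out a single function $\mdl[e]{D}^n \rightarrow \mdl[e]{D}$. Defining an extensional interpretation $\mdl[x]{I}$ by these evaluations at $s_0$, I would check that every clause of Definition~\ref{def:extmod} is matched verbatim.

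A routine induction on formulas $\sph \in \fl$ then establishes $\den[\mdl[i']{M},\gsn,s_0]{\sph} = \den[\mdl[x]{M},\gsn]{\sph}$: the base cases follow from the evaluation bijection (and from the fact that variable assignments $\gsn$ already ignore $s$), while the recursive clauses for predication and function application carry over unchanged because the parameter $s$ can only take the value $s_0$ throughout. Having established denotational agreement, I would conclude the equivalence of models by exhibiting a data-preserving correspondence between $\mdl[i']{M} = \bt{\bp[\tau\in\mathcal{T}\setminus\mathcal{K}]{\mdl[\tau]{D}},\bp[\tau\in\mathcal{K}]{\mdl[\tau]{F}'},\mdl{I}}$ and $\mdl[x]{M} = \bt{\bp[\tau\in\mathcal{T}]{\mdl[\tau]{D}},\mdl[x]{I}}$: the map forgetting the trivial-frame component and replacing $\mdl{I}$ by its evaluation at $s_0$ is a bijection on non-redundant data.

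The main obstacle, I expect, is making precise the sense of \emph{equivalence} between tuples of different arity, since $\mdl[i']{M}$ carries an extra indexed family $\bp[\tau\in\mathcal{K}]{\mdl[\tau]{F}'}$ that $\mdl[x]{M}$ does not. I would handle this by fixing upfront that \emph{equivalence} means identical non-index domains, identical extensional data under the evaluation-at-$s_0$ map, and identical denotations for every formula; the surplus frame component is then truly inert (a singleton carrying its only possible self-loop is determined up to isomorphism) and the equivalence is unambiguous.
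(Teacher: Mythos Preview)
Your proposal is correct and follows essentially the same approach as the paper: both arguments observe that the trivialized index product is a singleton, then verify clause-by-clause (constants, variables, predicates, function symbols) that evaluation at the unique index yields exactly the data of an extensional interpretation, with denotational agreement following. Your treatment is in fact slightly more careful than the paper's, since you make explicit both the underlying bijection $\bc{s_0} \rightarrow X \cong X$ and the sense in which tuples of different arity are being called \emph{equivalent}; the paper leaves both of these implicit and handles the denotation clauses in separate propositions outside the theorem proof rather than folding them into a single induction as you do.
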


\begin{proof}\label{pr:equiv}
	To see this, we need to show that the action of the interpretation function of intensional models on constants, variables, predicates, and functions with totally trivial frames is equivalent to the interpretation function in extensional models.	Throughout, since frames in \mdl[i']{M} are trivial, for each \f{\tau\in\mathcal{K}} we have \f{\mdl[\tau]{D} = \bc{d_\tau}}, the single element in domain \mdl[\tau]{D}. Therefore, \f{\prod_{\tau\in\mathcal{K}} \mdl[\tau]{D}= \bcdef{\bp{d_{\tau_1},d_{\tau_2},\dots,d_{\tau_n}}}{d_{\tau_i}\in\mdl[\tau_i]{D}\forall \tau_i \in \mathcal{K}}} a singleton set containing a singleton tuple; call it \f{d_\mathcal{K} = \bp{d_{\tau_1},d_{\tau_2},\dots,d_{\tau_n}}}.
	
	\begin{lemma}\label{lem:const}
		A trivial intensional constant is equivalent to an extensional constant.
		
		\begin{proof}\label{pr:cons}
			Let \f{\sph_\tau} be a non-logical constant of type \f{\tau}. Recall: in \mdl[i]{M}, \f{\fun{\mdl{I}}{\sph}:\prod_{\tau\in\mathcal{K}} \mdl[\tau]{D}\rightarrow\bp[\tau\in\mathcal{T}\setminus\mathcal{K}]{\mdl[\tau]{D}}}. Now \f{\funt{\mdl{I}}{\sph_\tau}{d_\mathcal{K}} \in \bp[\tau\in\mathcal{T}\setminus\mathcal{K}]{\mdl[\tau]{D}} }, which means that \f{\funt{\mdl{I}}{\sph_\tau}{d_\mathcal{K}}} is a function that assigns a value from \mdl[\tau]{D} to each \f{\tau\in\mathcal{T}\setminus\mathcal{K}}. However, since \f{d_\mathcal{K}} is the only element in \f{\prod_{\tau\in\mathcal{K}} \mdl[\tau]{D}}, \fun{\mdl{I}}{\sph_\tau} effectively assigns a constant value from \mdl[\tau]{D} to \f{\sph_\tau}, regardless of the input tuple \f{d_\mathcal{K}}; call this constant value \f{d_{\sph_\tau}\in\mdl[\tau]{D}}. Therefore, in \mdl[i']{M}, \fun{\mdl{I}}{\sph_\tau} also effectively assigns the constant value \f{d_{\sph_\tau}\in\mdl[\tau]{D}} to \f{\sph_\tau}.
			
			In the extensional model \mdl[x]{M} \f{\fun{\mdl{I}}{\sph_\tau}\in\mdl[\tau]{D}}. Let \f{\fun{\mdl{I}}{\sph_\tau}} in \mdl[x]{M} be \f{d'_{\sph_\tau}\in\mdl[\tau]{D}}. Since domains \bp[\tau\in\mathcal{T}\setminus\mathcal{K}]{\mdl[\tau]{D}} in \mdl[i']{M} are the same as \bp[\tau\in\mathcal{T}]{\mdl[\tau]{D}}  in \mdl[x]{M}, and the intensional interpretation function \f{\fun{\mdl{I}}{\sph_\tau}} in \mdl[i']{M} effectively assigns a constant value \f{d_{\sph_\tau}\in\mdl[\tau]{D}} to \f{\sph_\tau}, it follows that \f{d_{\sph_\tau}=d'_{\sph_\tau}}.
			
			Therefore, the interpretation function of constants in \mdl[i]{M} with trivial frames is equivalent to the interpretation function of constants in \mdl[x]{M} .
			
		\end{proof}
	\end{lemma}
	
	\begin{lemma}\label{lem:vars}
		A trivial intensional variable is equivalent to an extensional variable.
		
		\begin{proof}\label{pr:vars}
			Variables are not dependent on frame structures, and so are inherently equivalent in \mdl[i']{M} and \mdl[x]{M}. 
			
		\end{proof}
	\end{lemma}
	
	\begin{lemma}\label{lem:npred}
		A trivial intensional \f{n}-ary predicate is equivalent to an extensional \f{n}-ary predicate.
		
		\begin{proof}\label{pr:npred}
			
			Let \f{P} be an \f{n}-ary predicate. Begin with the intensional model: In \mdl[i]{M}, \f{\fun{\mdl{I}}{P}:\prod_{\tau\in\mathcal{K}} \mdl[\tau]{D}\rightarrow\fun{P}{\mdl[e]{D}^n}}, where \f{\funt{P}{\sph}{s}\subseteq\mdl[e]{D}^n} for all indices \f{s\in \prod_{\tau\in\mathcal{K}} \mdl[\tau]{D}}. We have that \f{\funt{\mdl{I}}{P}{d_\mathcal{K}} \subseteq \mdl[e]{D}^n}, so the interpretation of predicate \f{P} at index \f{d_\mathcal{K}} is a subset of the \f{n}-ary Cartesian product of the domain of entities. Since \f{d_\mathcal{K}} is the only element in \f{\prod_{\tau\in\mathcal{K}} \mdl[\tau]{D}}, \f{\fun{\mdl{I}}{P}} assigns a constant subset of \f{\mdl[e]{D}^n}, regardless of the input tuple \f{d_\mathcal{K}}; call this constant subset \f{S_P\subseteq\mdl[e]{D}^n}. 
			
			Now in the extensional model \mdl[x]{M}, \f{\fun{\mdl{I}}{P}\subseteq \mdl[e]{D}^n}. Let the subset assigned by \f{\fun{\mdl{I}}{P}} in \mdl[x]{M} be denoted as \f{S'_P\subseteq\mdl[e]{D}^n}. Since the domain \f{\mdl[e]{D}^n} in \mdl[i']{M} is the same as the domain in \mdl[x]{M}, and the intensional interpretation \f{\fun{\mdl{I}}{P}} in \mdl[i']{M} assigns a constant subset \f{S_P} of \f{\mdl[e]{D}^n}, it follows that \f{S_P = S'_P}.
			
			Therefore, the interpretation of \f{n}-ary predicates in the intensional model with trivial frames \mdl[i']{M} is equivalent to the interpretation of \f{n}-ary predicates in the extensional model \mdl[x]{M}.
			
		\end{proof}
	\end{lemma}
	
	\begin{lemma}\label{lem:nfunc}
		A trivial intensional \f{n}-ary function is equivalent to an extensional \f{n}-ary function.
		
		\begin{proof}\label{pr:nfunc}
			
			Let \f{f} be an \f{n}-ary function. In the intensional model \mdl[i]{M}, \f{\fun{\mdl{I}}{f}:\prod_{\tau\in\mathcal{K}} \mdl[\tau]{D}\rightarrow\bp{\mdl[e]{D}^n\rightarrow\mdl[e]{D}}}, where \f{\funt{f}{\sph}{s}:\mdl[e]{D}^n\rightarrow\mdl[e]{D}} for all indices \f{s\in \prod_{\tau\in\mathcal{K}} \mdl[\tau]{D}}. \f{\funt{\mdl{I}}{f}{d_\mathcal{K}}:\mdl[e]{D}^n\rightarrow\mdl[e]{D}}, which means that the interpretation of function \f{f} at index \f{d_\mathcal{K}} is a function from \f{n}-tuples of entities to single entities. Since \f{d_\mathcal{K}} is the only element in \f{\prod_{\tau\in\mathcal{K}} \mdl[\tau]{D}}, \f{\fun{\mdl{I}}{f}} effectively assigns a constant function from \f{\mdl[e]{D}^n} to \f{\mdl[e]{D}}, regardless of the input tuple \f{d_\mathcal{K}}. Call this constant function \f{g_f:\mdl[e]{D}^n\rightarrow\mdl[e]{D}}.
			
			Consider now the extensional model \mdl[x]{M}, \f{\fun{\mdl{I}}{f}: \mdl[e]{D}^n\rightarrow\mdl[e]{D}}. Let the function assigned by \f{\fun{\mdl{I}}{f}} in \mdl[x]{M} be denoted as \f{h_f : \mdl[e]{D}^n \rightarrow \mdl[e]{D}}. Since the domain \f{\dom{\mdl[e]{D}^n}} and codomain \f{\ran{\mdl[e]{D}}} in \mdl[i']{M} are the same as the domain and codomain in \mdl[x]{M}, and the intensional interpretation \f{\fun{\mdl{I}}{f}}in \mdl[i']{M} assigns a constant function \f{g_f} from \f{\mdl[e]{D}^n} to \f{\mdl[e]{D}}, it follows that \f{g_f = h_f}.
			
			Therefore, the interpretation of \f{n}-ary functions in the intensional model with trivial frames \mdl[i']{M} is equivalent to the interpretation of \f{n}-ary functions in the extensional model \mdl[x]{M}.
			
		\end{proof}
	\end{lemma}
	
	Therefore, a fully trivial intensional model is equivalent to an extensional model.
\end{proof}

And so it follows that the extensional model (read, trivial intensional model) is an object of \cat{ModInt}. Going further, we may see how the denotation function (which is recursively built by the interpretation function) of the trivial intensional model and extensional model relate. Recall: an extensional denotation function \den[\mdl{M},\gsn]{.} assigns to every expression \sph{} of the language \fl{} a semantic value \den[\mdl{M},\gsn]{\sph} by recursively building up basic interpretation functions \fun{\mdl{I}}{.} as \fun{\mdl{I}}{\sph}; an intensional denotation function \den[\mdl{M},\gsn,s]{.} assigns to every expression \sph{} of the language \fl{} a semantic value \den[\mdl{M},\gsn,s]{\sph}, again by recursively building up basic interpretation functions \funt{\mdl{I}}{.}{s} as \funt{\mdl{I}}{\sph}{s}.

\begin{proposition}\label{prop:denotconst}
	The denotation of constants in a trivial intensional model is equivalent to the denotation of constants in an extensional model.
	
\end{proposition}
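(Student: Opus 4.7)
The plan is to proceed directly from the base clauses of the two denotation functions for constants and then invoke Lemma \ref{lem:const}, which has already done the substantive work at the level of interpretation functions. Because the denotation clause for a constant is nothing more than evaluation of the interpretation function, the proposition will reduce to a short chain of equalities rather than a new argument.

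First, I would set the definitions side by side. In the extensional model, the base clause gives \f{\den[\mdl{M},\gsn]{\sph} = \fun{\mdl{I}}{\sph}}, while in the intensional model the corresponding clause gives \f{\den[\mdl{M},\gsn,s]{\sph} = \funt{\mdl{I}}{\sph}{s}} for any admissible index \f{s\in\prod_{\tau\in\mathcal{K}}\mdl[\tau]{D}}. Restricting to the fully trivial intensional model \mdl[i']{M}, each factor \mdl[\tau]{D} for \f{\tau\in\mathcal{K}} is a singleton, so the product collapses to a one-element set containing the tuple \f{d_\mathcal{K}} defined in the proof of Theorem~\ref{thm:equiv}. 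The only admissible index is therefore \f{s = d_\mathcal{K}}, and so \f{\den[\mdl{M},\gsn,d_\mathcal{K}]{\sph} = \funt{\mdl{I}}{\sph}{d_\mathcal{K}}}.

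Second, I would chain these equalities through Lemma \ref{lem:const}, which established \f{\funt{\mdl{I}}{\sph_\tau}{d_\mathcal{K}} = d_{\sph_\tau} = \fun{\mdl{I}}{\sph_\tau}}, where the final identification holds because the domains \bp[\tau\in\mathcal{T}\setminus\mathcal{K}]{\mdl[\tau]{D}} coincide across \mdl[i']{M} and \mdl[x]{M}. Concatenating yields \f{\den[\mdl{M},\gsn,d_\mathcal{K}]{\sph} = \funt{\mdl{I}}{\sph_\tau}{d_\mathcal{K}} = \fun{\mdl{I}}{\sph_\tau} = \den[\mdl{M},\gsn]{\sph}}, which is exactly the claim. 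There is essentially no obstacle here; the only notational wrinkle worth flagging is that the assignment function \gsn{} plays no role in the base clause for constants, so the two denotations are already on the same footing despite the intensional semantic brackets carrying an extra index in their superscript. Once this is observed, the proposition is Lemma~\ref{lem:const} repackaged at the denotational level.
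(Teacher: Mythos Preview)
Your proposal is correct and follows essentially the same approach as the paper: both reduce the denotation clause for constants to the interpretation function and then use the fact that, over the singleton index set \f{\bc{d_\mathcal{K}}}, the intensional interpretation collapses to the extensional one. The only cosmetic difference is that you explicitly invoke Lemma~\ref{lem:const} to avoid repeating its argument, whereas the paper re-derives the identification \f{k=k'} inline; your version is slightly more economical for that reason.
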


\begin{proof}\label{pr:denotconst}
	If \sph{} is a constant in the extensional model, \f{\sph \in \fl\com \den[\mdl{M},\gsn]{\sph} = \fun{\mdl{I}}{\sph}}. In the intensional model, for all indices \f{s\in \prod_{\tau\in\mathcal{K}} \mdl[\tau]{D}}: if \sph{} is a constant \f{\sph \in \fl\com \den[\mdl{M},\gsn,s]{\sph} = \funt{\mdl{I}}{\sph}{s}}.
	
	In \mdl[i']{M}, \f{\fun{\mdl{I}}{\sph}:\prod_{\tau\in\mathcal{K}} \mdl[\tau]{D}\rightarrow\mdl[e]{D}} is a function that maps the single element \f{d_\mathcal{K}} to some element in \mdl[e]{D}. Call this element \f{k}, so \f{\funeqt{\mdl{I}}{\sph}{d_\mathcal{K}}{k}\in\mdl[e]{D}}. Since \f{d_\mathcal{K}} is the only possible input, \f{\funeqt{\mdl{I}}{\sph}{s}{k}} for any \f{s}. Therefore, \f{\den[\mdl{M},\gsn,s]{\sph}=\funeqt{\mdl{I}}{\sph}{s}{k}} for any \f{s}. In \mdl[x]{M}, \f{\fun{\mdl{I}}{\sph}} directly assigns an element from \mdl[e]{D} to \f{\sph}; call this element \f{k'\in\mdl[e]{D}}. Therefore, \f{\den[\mdl{M},\gsn]{\sph}=\funeq{\mdl{I}}{\sph}{k'}\in\mdl[e]{D}}. Since \mdl[i']{M} is derived from \mdl[i]{M} by trivializing frames, and \mdl[x]{M} is equivalent to a fully trivialized \mdl[i]{M}, we can conclude that \f{k = k'}.
	
	Therefore, \f{\den[\mdl{M},\gsn]{\sph} = \den[\mdl{M},\gsn,s]{\sph}}, where \f{s\in \prod_{\tau\in\mathcal{K}} \mdl[\tau]{D}}; the denotation of constants in the extensional model \mdl[x]{M} is equivalent to their denotation in the intensional model \mdl[i']{M} with trivial frames, regardless of the choice of \f{s}.
\end{proof}

\begin{proposition}\label{prop:denotvar}
	The denotation of variables in a trivial intensional model is equivalent to the denotation of variables in an extensional model.
	
\end{proposition}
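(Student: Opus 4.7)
The plan is that this proposition should be almost immediate from the two denotation clauses, since variables are defined to sidestep the interpretation function entirely and depend only on the assignment function. So the strategy is simply to unpack the defining clauses and observe that the index \f{s} plays no role.

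First I would recall the relevant clauses. In the extensional setting, for \f{\sph\in V}, Definition~\ref{def:extmod} gives \f{\den[\mdl{M},\gsn]{\sph}=\fun{\gsn}{\sph}}. In the intensional setting, for \f{\sph\in V} and every index \f{s\in\prod_{\tau\in\mathcal{K}}\mdl[\tau]{D}}, Definition~\ref{def:intenmod} gives \f{\den[\mdl{M},\gsn,s]{\sph}=\fun{\gsn}{\sph}}. The right-hand side is manifestly independent of \f{s}, so no index-collapsing argument analogous to the one used for constants in Proposition~\ref{prop:denotconst} is needed.

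Next I would verify that the assignment function \gsn{} is literally the same object in both settings. In both definitions, \gsn{} is a function \f{\gsn:V\rightarrow\mdl[e]{D}} to the domain of entities, and—crucially—it is not part of the model tuple, as noted in the footnote following Definition~\ref{def:extmod}. The passage from \mdl[i]{M} to \mdl[i']{M} via the morphisms of \cat{ModInt} only acts on the Kripke frames \bp[\tau\in\mathcal{K}]{\mdl[\tau]{F}}, leaving the typed domains \bp[\tau\in\mathcal{T}\setminus\mathcal{K}]{\mdl[\tau]{D}} (and in particular \mdl[e]{D}) intact; consequently \gsn{} is unchanged, and by Theorem~\ref{thm:equiv} the equivalence between \mdl[i']{M} and \mdl[x]{M} preserves this assignment verbatim.

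From these two observations the conclusion is immediate: for any \f{\sph\in V} and any \f{s\in\prod_{\tau\in\mathcal{K}}\mdl[\tau]{D}}, we have \f{\den[\mdl{M},\gsn,s]{\sph}=\fun{\gsn}{\sph}=\den[\mdl{M},\gsn]{\sph}}. There is essentially no obstacle here; the only thing requiring care is to flag that the equivalence holds \emph{regardless} of the choice of \f{s}, since variables carry no intensional content in either framework, and thus the proposition holds vacuously with respect to frame trivialization.
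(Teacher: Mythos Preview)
Your proposal is correct and follows essentially the same approach as the paper: both argue that variable denotations reduce to \fun{\gsn}{\sph} in either framework, with no dependence on the index \f{s} or the frame structure. The paper's own proof is in fact a one-liner that simply refers back to the analogous observation for the interpretation function (Lemma~\ref{lem:vars}), whereas you spell out the definitional unpacking and the invariance of \gsn{} explicitly; this added detail is fine and arguably clearer, but not a genuinely different route.
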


\begin{proof}\label{pr:denotvar}
	As seen for the interpretation function, variables are not dependent on frame structures, and so are inherently equivalent in \mdl[i']{M} and \mdl[x]{M}. 
\end{proof}

\begin{proposition}\label{prop:denotnpred}
	The denotation of \f{n}-ary predicates in a trivial intensional model is equivalent to the denotation of \f{n}-ary predicates in an extensional model.
	
\end{proposition}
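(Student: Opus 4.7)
The plan is to mirror the two preceding propositions and to import the lifting already established by Lemma \ref{lem:npred}. First I would write the two denotation clauses side by side: in the extensional model, $\den[\mdl{M},\gsn]{\fun{P}{\sph_1,\dots,\sph_n}} = 1$ precisely when $\bt{\den[\mdl{M},\gsn]{\sph_1},\dots,\den[\mdl{M},\gsn]{\sph_n}} \in \fun{\mdl{I}}{P}$, while in the intensional model at an index $s$, $\den[\mdl{M},\gsn,s]{\fun{P}{\sph_1,\dots,\sph_n}} = 1$ precisely when $\bt{\den[\mdl{M},\gsn,s]{\sph_1},\dots,\den[\mdl{M},\gsn,s]{\sph_n}} \in \funt{\mdl{I}}{P}{s}$. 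The task thus splits into matching the predicate extensions on the right-hand side and matching the argument tuples on the left.

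For the extensions, I would appeal to Lemma \ref{lem:npred}: under a fully trivial intensional model the singleton $\prod_{\tau\in\mathcal{K}} \mdl[\tau]{D}$ forces $\funt{\mdl{I}}{P}{s}$ to be a constant subset $S_P \subseteq \mdl[e]{D}^n$ independent of $s$, and that lemma already identifies $S_P$ with the extensional subset $S'_P = \fun{\mdl{I}}{P}$. For the tuples, I would invoke Propositions \ref{prop:denotconst} and \ref{prop:denotvar} to conclude that each argument-term denotation coincides, i.e. $\den[\mdl{M},\gsn,s]{\sph_i} = \den[\mdl{M},\gsn]{\sph_i}$ for every argument $\sph_i$ that is a constant or a variable. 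Combining these two equalities, the two biconditional characterizations of truth collapse into the same condition, which yields $\den[\mdl{M},\gsn,s]{\fun{P}{\sph_1,\dots,\sph_n}} = \den[\mdl{M},\gsn]{\fun{P}{\sph_1,\dots,\sph_n}}$ for every index $s$.

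The main obstacle I anticipate is that the argument terms may themselves be complex, e.g.\ nested function applications, which are not covered by the cited prior propositions. A fully rigorous version would therefore require structural induction on term shape, with the base cases handled by Propositions \ref{prop:denotconst} and \ref{prop:denotvar} and a genuinely new step whenever an argument is itself a function application. Since the recursive clauses in Definitions \ref{def:extmod} and \ref{def:intenmod} treat predicates and functions as sibling cases, I would write the argument-term step here contingent on the analogous claim for $n$-ary functions (the natural next proposition), so that the two results can effectively be read as a single simultaneous induction on term structure rather than two independent proofs.
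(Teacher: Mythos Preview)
Your proposal is correct and matches the paper's proof almost exactly: the paper likewise writes the two denotation clauses, argues that the singleton index set forces \f{\funt{\mdl{I}}{P}{s}} to a constant subset \f{S_P} coinciding with the extensional \f{S'_P}, then invokes Proposition~\ref{prop:denotconst} for the argument terms and concludes. Your added remark about needing a simultaneous structural induction on term shape to handle nested function terms is in fact more careful than the paper, which simply cites the constant case and leaves the compound-term issue implicit.
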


\begin{proof}\label{pr:denotnpred}
	In the extensional model: for any \f{n}-ary predicate \f{P \in\fl} and sequence of terms \f{\sph_1,\sph_2,\dots,\sph_n},  \f{\den[\mdl{M},\gsn]{\fun{P}{\sph_1,\sph_2,\dots,\sph_n}}=1\wenn}\begin{align*}
		\bt{\den[\mdl{M},\gsn]{\f{\sph_1}},\dots,\den[\mdl{M},\gsn]{\f{\sph_n}}}\in\den[\mdl{M},\gsn]{\f{P}}=\fun{\mdl{I}}{P}.
	\end{align*}
	In the intensional model, for all indices \f{s\in \prod_{\tau\in\mathcal{K}} \mdl[\tau]{D}}, for any \f{n}-ary predicate \f{P \in\fl} and sequence of terms \f{\sph_1,\sph_2,\dots,\sph_n},  \f{\den[\mdl{M},\gsn,s]{\fun{P}{\sph_1,\sph_2,\dots,\sph_n}}=1\wenn}\begin{align*}
		\bt{\den[\mdl{M},\gsn,s]{\f{\sph_1}},\dots,\den[\mdl{M},\gsn,s]{\f{\sph_n}}}\in\den[\mdl{M},\gsn,s]{\f{P}}=\funt{\mdl{I}}{P}{s}.
	\end{align*}	
	
	In \mdl[i']{M}, \f{\fun{\mdl{I}}{P}:\prod_{\tau\in\mathcal{K}} \mdl[\tau]{D}\rightarrow\fun{\mdl{I}}{\mdl[e]{D}^n}} is a function that maps the single element \f{d_\mathcal{K}} to some subset of \mdl[e]{D}; call this subset \f{S_P}, so \f{\funeqt{\mdl{I}}{P}{d_\mathcal{K}}{S_P}\subseteq\mdl[e]{D}}. Since \f{d_\mathcal{K}} is the only possible input, \f{\funeqt{\mdl{I}}{P}{S}{S_P}} for any \f{s}. Therefore, \f{\den[\mdl{M},\gsn,s]{\f{P}}=\funeq{\mdl{I}}{P}{s}=S_P} for any \f{s}. In \mdl[x]{M}, \f{\fun{\mdl{I}}{P}} directly assigns a subset of \mdl[e]{D} to \f{P}; call this subset \f{S'_{P}\subseteq\mdl[e]{D}}. Since \mdl[i']{M} is derived from \mdl[i]{M} by trivializing frames, and \mdl[x]{M} is equivalent to a fully trivialized \mdl[i]{M}, it follows that \f{S_P=S'_{P}}.
	
	Consider, now the terms \f{\sph_1,\sph_2,\dots,\sph_n}. From the proof for Proposition~\ref{prop:denotconst}: \f{\den[\mdl{M},\gsn]{\f{\sph_i}}=\den[\mdl{M},\gsn,s]{\f{\sph_i}}} for any \f{s} and for each \f{i} from \f{1} to \f{n}. Call this common value for each term \f{k_i}, so \f{\den[\mdl{M},\gsn]{\f{\sph_i}}=\den[\mdl{M},\gsn,s]{\f{\sph_i}}=k_i} for each \f{i}. Therefore, \f{\den[\mdl{M},\gsn]{\fun{P}{\sph_1,\sph_2,\dots,\sph_n}}=1} if and only if \f{\bt{k_1,k_2,\dots,k_n}\in S'_{P}} and \f{\den[\mdl{M},\gsn,s]{\fun{P}{\sph_1,\sph_2,\dots,\sph_n}}=1} if and only if \f{\bt{k_1,k_2,\dots,k_n}\in S_P}; since \f{S_P = S'_{P}}, it follows that \f{\den[\mdl{M},\gsn]{\fun{P}{\sph_1,\sph_2,\dots,\sph_n}}=\den[\mdl{M},\gsn,s]{\fun{P}{\sph_1,\sph_2,\dots,\sph_n}}} for any \f{s}.
	
	Therefore, the denotation of \f{n}-ary predicates applied to terms in the extensional model \mdl[x]{M} is equivalent to their denotation in the intensional model \mdl[i']{M} with trivial frames, regardless of the choice of \f{s}.
\end{proof}

\begin{proposition}\label{prop:denotnfun}
	The denotation of \f{n}-ary functions in a trivial intensional model is equivalent to the denotation of \f{n}-ary functions in an extensional model.
	
\end{proposition}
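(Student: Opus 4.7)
The plan is to follow the template of the proofs of Proposition~\ref{prop:denotconst} and Proposition~\ref{prop:denotnpred} exactly, with set-membership replaced by function application. First, I would unpack the two definitional clauses: in \mdl[x]{M} one has
\[
\den[\mdl{M},\gsn]{\fun{f}{\sph_1,\dots,\sph_n}} = \fun{\fun{\mdl{I}}{f}}{\den[\mdl{M},\gsn]{\sph_1},\dots,\den[\mdl{M},\gsn]{\sph_n}},
\]
while in \mdl[i']{M}, for every index $s \in \prod_{\tau\in\mathcal{K}}\mdl[\tau]{D}$,
\[
\den[\mdl{M},\gsn,s]{\fun{f}{\sph_1,\dots,\sph_n}} = \fun{\funt{\mdl{I}}{f}{s}}{\den[\mdl{M},\gsn,s]{\sph_1},\dots,\den[\mdl{M},\gsn,s]{\sph_n}}.
\]

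Next, I would handle the outer interpretation by appealing to Lemma~\ref{lem:nfunc}: because every frame in \mdl[i']{M} is trivial, $\prod_{\tau\in\mathcal{K}}\mdl[\tau]{D}$ is the singleton containing the single tuple $d_\mathcal{K}$, and $\funt{\mdl{I}}{f}{s}$ is the constant function $g_f : \mdl[e]{D}^n \to \mdl[e]{D}$ independent of the choice of $s$; moreover, $g_f$ coincides with the extensional interpretation $\fun{\mdl{I}}{f} = h_f$ of $f$ in \mdl[x]{M}. The outer function applied on either right-hand side is therefore the same map $g_f = h_f$.

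For the arguments $\sph_1,\dots,\sph_n$, I would cite Proposition~\ref{prop:denotconst} and Proposition~\ref{prop:denotvar} (for atomic subterms) together with Proposition~\ref{prop:denotnpred} and the present proposition (as recursive steps for complex subterms) to conclude that $\den[\mdl{M},\gsn]{\sph_i} = \den[\mdl{M},\gsn,s]{\sph_i}$ for each $i$, independent of $s$; call this common value $k_i \in \mdl[e]{D}$. Substituting, both displayed right-hand sides collapse to $\fun{g_f}{k_1,\dots,k_n}$, which yields $\den[\mdl{M},\gsn]{\fun{f}{\sph_1,\dots,\sph_n}} = \den[\mdl{M},\gsn,s]{\fun{f}{\sph_1,\dots,\sph_n}}$ for every $s$.

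The only point worth flagging is methodological rather than mathematical: because subterms $\sph_i$ may themselves be function or predicate applications, the four denotation propositions (\ref{prop:denotconst}, \ref{prop:denotvar}, \ref{prop:denotnpred}, and the present one) are most cleanly understood as a simultaneous structural induction over the formulas of \fl, with each supplying the inductive step for the others. Once this is acknowledged, the argument reduces to the pointwise substitution above and, as in the predicate case, involves no genuine obstacle.
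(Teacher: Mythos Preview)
Your proposal is correct and follows essentially the same approach as the paper: unpack the two denotation clauses, argue that the outer interpretation $\funt{\mdl{I}}{f}{s}$ collapses to a fixed $g_f$ coinciding with the extensional $\fun{\mdl{I}}{f}$, then use the earlier propositions to equate the argument denotations $k_i$ and substitute. Your version is in fact slightly more careful than the paper's, which cites only Proposition~\ref{prop:denotconst} for the subterms and re-derives the content of Lemma~\ref{lem:nfunc} inline rather than invoking it; your explicit remark that the four denotation propositions are best read as a simultaneous structural induction is a welcome clarification the paper leaves implicit.
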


\begin{proof}\label{pr:denotnfun}
	Again, in the extensional model: for any \f{n}-ary function \f{f \in\fl} and sequence of terms \f{\sph_1,\sph_2,\dots,\sph_n},\begin{align*}
		\den[\mdl{M},\gsn]{\fun{f}{\sph_1,\sph_2,\dots,\sph_n}} & =\fun{\den[\mdl{M},\gsn]{\f{f}}}{\den[\mdl{M},\gsn]{\f{\sph_1}},\dots,\den[\mdl{M},\gsn]{\f{\sph_n}}} \\
		& =\fun{\fun{\mdl{I}}{f}}{\den[\mdl{M},\gsn]{\f{\sph_1}},\dots,\den[\mdl{M},\gsn]{\f{\sph_n}}}.
	\end{align*}
	
	In the intensional model, for all indices \f{s\in \prod_{\tau\in\mathcal{K}} \mdl[\tau]{D}}: for any \f{n}-ary function \f{f \in\fl} and sequence of terms \f{\sph_1,\sph_2,\dots,\sph_n},\begin{align*}
		\den[\mdl{M},\gsn,s]{\fun{f}{\sph_1,\sph_2,\dots,\sph_n}} & =\fun{\den[\mdl{M},\gsn,s]{\f{f}}}{\den[\mdl{M},\gsn,s]{\f{\sph_1}},\dots,\den[\mdl{M},\gsn,s]{\f{\sph_n}}} \\
		& =\fun{\funt{\mdl{I}}{f}{s}}{\den[\mdl{M},\gsn,s]{\f{\sph_1}},\dots,\den[\mdl{M},\gsn,s]{\f{\sph_n}}}.
	\end{align*}
	
	In \mdl[i']{M}, \f{\fun{\mdl{I}}{f}:\prod_{\tau\in\mathcal{K}} \mdl[\tau]{D}\rightarrow\bp{\mdl[e]{D}^n\rightarrow\mdl[e]{D}}} is a function that maps the single element \f{d_K} to some function from \f{\mdl[e]{D}^n} to \f{\mdl[e]{D}}; call this function \f{g_f}, so \f{\funeqt{\mdl{I}}{f}{d_\mathcal{K}}{g_f}:\mdl[e]{D}^n\rightarrow\mdl[e]{D}}. Since \f{d_K} is the only possible input, \f{\funeqt{\mdl{I}}{f}{s}{g_f}} for any \f{s}. Therefore, \f{\den[\mdl{M},\gsn,s]{\f{f}}=\funeqt{\mdl{I}}{f}{s}{g_f}} for any \f{s}. In \mdl[x]{M}, \f{\fun{\mdl{I}}{f}} directly assigns a function from \f{\mdl[\tau]{D}^n} to \mdl[e]{D} to \f{f}; call this function \f{g'_{f}:\mdl[e]{D}^n\rightarrow\mdl[e]{D}}. Therefore, \f{\den[\mdl{M},\gsn]{\f{f}}=\fun{\mdl{I}}{f}=g'_f:\mdl[e]{D}^n\rightarrow\mdl[e]{D}}. Since \mdl[i']{M} is derived from \mdl[i]{M} by trivializing frames, and \mdl[x]{M} is equivalent to a fully trivialized \mdl[i]{M}, it follows that \f{g_f = g'_{f}}.
	
	Consider, now the terms \f{\sph_1,\sph_2,\dots,\sph_n}. Again, from the proof for Proposition~\ref{prop:denotconst}: \f{\den[\mdl{M},\gsn]{\f{\sph_i}}=\den[\mdl{M},\gsn,s]{\f{\sph_i}}} for any \f{s} and for each \f{i} from \f{1} to \f{n}. Call this common value for each term as \f{k_i}, so \f{\den[\mdl{M},\gsn]{\f{\sph_i}}=\den[\mdl{M},\gsn,s]{\f{\sph_i}}=k_i} for each \f{i}. Therefore,\begin{align*}
		\den[\mdl{M},\gsn]{\fun{f}{\sph_1,\sph_2,\dots,\sph_n}}=\fun{g'_{f}}{k_1,k_2,\dots,k_n},
	\end{align*} 
	
	and\begin{align*}
		\den[\mdl{M},\gsn,s]{\fun{f}{\sph_1,\sph_2,\dots,\sph_n}}=\fun{g_f}{k_1,k_2,\dots,k_n}.
	\end{align*} 
	
	Since \f{g_f = g'_{f}}, it follows that, or any \f{s}:\begin{align*}
		\den[\mdl{M},\gsn]{\fun{f}{\sph_1,\sph_2,\dots,\sph_n}}=\den[\mdl{M},\gsn,s]{\fun{f}{\sph_1,\sph_2,\dots,\sph_n}}.
	\end{align*}
	
	Therefore, we have that for the denotation of \f{n}-ary functions applied to terms in the extensional model \mdl[x]{M}, it is equivalent to their denotation in the intensional model \mdl[i']{M} with trivial frames, regardless of the choice of \f{s}.
\end{proof}

\section{Discussion}\label{sec:disc}
Natural language is \emph{at least} intensional. Intensionality in the category-theoretic approach is modular in its computation; and, as a consequence of the commutativity of \cat{ModInt}, is agnostic as to which intensional information is processed first, and does not prioritize one over the other. When choosing to process a particular intension, however, the process is discrete before recovering the extension. This procedural approach is reminiscent of the Kaplan's procedural intensionality, \sans indexical contexts \cite{Kaplan1989KAPD,Russell2012,stojanovichal02138331}, which initially interprets the indexical context, then determines the intension before finally recovering the extension from the specified intension. It is a natural next step in the categorical perspective to integrate the set of contexts, essenitally decorating the intensional model similar to what was shown in \cite{Russell2012}. 

Linguistically (and philosophically), this distinction goes exactly as \qte{content} and \qte{reference}. Following \cite{ZimmermannSternefeld2013}, meaning is calculated according to certain communicative functions of expressions containing \qte{content} and \qte{reference}. \term{Content} concerns which information is expressed, while \term{reference} concerns what this information is about. The meaning of any expression has (at least) two components related to content and reference: the intension is its contribution to the content of expressions in which it occurs; the extension is its contribution to the reference of expressions in which it occurs. Where the set of all of an expression's references forms the extension of that expression, the intension is a function or rule that determines that extension.

Contrastingly, the approach to processing intensions in the manner of Lewis and elaborated upon by Stalnaker \cite{Stalnaker2004,LewisKeenan1975} integrates context and intensional elements like worlds and times scenarios in a less segmented, continuous process than it is a discrete computation. This may be hinted at in \cite{Chierchia2000CHIMAG}, for example, which bundles intensions into an \qte{index}, for example, before processing. In \cat{ModInt}, this bundling is not necessary, since we may process however which way we please due to the commutativity of the category; integrating a unified index that includes all parameters that might affect meaning, including speaker, time, place, possible world, and other contextual factors gives a single point of reference for all expressions, and decorates the model structure in the category. Information like indexicals, pronouns, and demonstratives are treated as functions from indices to extensions, and not split into character and content as in Kaplan's theory.

What, then, does this framework tell us about intensions and extensions? What does this tell us about language? There is no real reason to privilege one intension over another. In this way, we may begin our computation at any intension, since the category commutes and the objects are accessibly modular. Indeed, there is no real reason to privilege time, tense, aspect, modality, or even more innocuous frames like location, for example. Languages represent each of these somehow, and how they do so is different in each language. There is no reason that one should be prioritized over another: this framework reflects that agnosticism.

As stated in Section~\ref{subsec:execat}, in principle, there is no limit to the number of frames in an intensional model; if we are interested in supplying the model with frames (or corrdinates of intensions, as it may be referred to as \cite{Chierchia2000CHIMAG}), we may do so. A particular example frame is one for locations (or, sequences of locations).

\begin{itemize}
	\item Location intensional model triple:\begin{align*}
		\mdl[L]{M}& = \bt{\bp[\tau\in\mathcal{T}\setminus\mathcal{K}]{\mdl[\tau]{D}},\mdl[L]{F},\mdl{I}} \\
		& = \bt{\bp[\tau\in\mathcal{T}\setminus\mathcal{K}]{\mdl[\tau]{D}},\bp{\mdl[L]{D},\mdl[L]{R}},\mdl{I}}.
	\end{align*}
	
	\item Modal, tense, location intensional model quintuple
	\begin{align*}
		\mdl[W,T,L]{M}& = \bt{\bp[\tau\in\mathcal{T}\setminus\mathcal{K}]{\mdl[\tau]{D}},\mdl[W]{F},\mdl[T]{F},\mdl[L]{F},\mdl{I}} \\
		& = \bt{\bp[\tau\in\mathcal{T}\setminus\mathcal{K}]{\mdl[\tau]{D}},\bp{\mdl[W]{D},\mdl[W]{R}},\bp{\mdl[T]{D},\mdl[T]{R}},\bp{\mdl[L]{D},\mdl[L]{R}},\mdl{I}}.
	\end{align*}
	
	\item \f{\mph[L]{f}:\mph[L]{F}\rightarrow\mph[L]{F}'} operates on the location frame \f{\mdl[L]{F} = \bt{\mdl[L]{D},\mdl[L]{R}}} to give a trivial location frame \f{\mdl[L]{F}' = \bt{\mdl[L]{D}',\mdl[L]{R}'}}, in which \f{\mdl[L]{D}'} has one element and \f{\mdl[L]{R}'} is a relation on that one element.
	
\end{itemize}

The category well-accommodates such a model, since it is simply an additional intensional component; the commutative diagram is given in \ref{fig:catwithloc}, which, by inspection, respects commutativity as discussed.

\begin{figure}[H]
	\[\begin{tikzcd}
			&&& \cat{ModInt} \\
			&& \mdl[W,T,L']{M}&&& \mdl[W,T',L']{M}\\
			\mdl[W,T,L]{M} &&& \mdl[W,T',L]{M}\\
			\\
			&& \mdl[W',T,L']{M}&&& \mdl[W',T',L']{M} \\
			\mdl[W',T,L]{M}&&& \mdl[W',T',L]{M}
			\arrow["{\mph[T]{f}}", from=2-3, to=2-6]
			\arrow["{\mph[W]{f}}"', from=2-3, to=5-3]
			\arrow["{\mph[W]{f}}"', from=2-6, to=5-6]
			\arrow["{\mph[L]{f}}", from=3-1, to=2-3]
			\arrow["{\mph[T]{f}}", from=3-1, to=3-4]
			\arrow["{\mph[W]{f}}"', from=3-1, to=6-1]
			\arrow["{\mph[L]{f}}"', from=3-4, to=2-6]
			\arrow["{\mph[W]{f}}"', from=3-4, to=6-4]
			\arrow["{\mph[T]{f}}", from=5-3, to=5-6]
			\arrow["{\mph[L]{f}}", from=6-1, to=5-3]
			\arrow["{\mph[T]{f}}", from=6-1, to=6-4]
			\arrow["{\mph[L]{f}}", from=6-4, to=5-6]
		\end{tikzcd}\]
	\caption{Category of intensional models, specified with location frame}\label{fig:catwithloc}
\end{figure}

A location frame is an attractive temptation; this does not account for all aspects of context, but it does offer a principled, computationally tractable, and categorically supported process for interpreting some aspects of context. \cite{Chierchia2000CHIMAG} disclaims, however, that there is no principled limitation to the inclusion of such intensions. An answer to this limitless intensionality is, again, offered by indexicals, and location, as an indexical, has a more complete picture given by \cite{Kaplan1989KAPD,Russell2012,stojanovichal02138331} and \cite{Stalnaker2004,LewisKeenan1975}, which is a natural direction for future work.

This application of CT to linguistics is different from other category theoretic approaches, which generally seek to account for the syntax-semantic interface, as opposed to the results here, which categorifies the structure of semantic processing of intensions alone, which is (in the opinion of the author) perhaps more attractive to a wider audience of linguists, mathematicians, and logicians. The influence of CT in linguistics has primarily been used to model categorial grammars \cite{BarHillel1953,BarHillel1954,BarHillelEtAl1960,shiebler2020incremental}, in which words are assigned categories (\eg, noun phrases, verb phrases) and the way these categories combine follows from the principles of categorial composition \cite{Lambek1999,Preller2005,Lambek2008,BuszkowskiMoroz2008}. Such models go by the moniker \qte{pregroup grammar models}. Building from the formalism of pregroups, compositional distributional models combine distributional representations of word meanings with compositional operations \cite{shr2010}. CT has been used to formalize these models, with word meanings as objects and compositional operations as functors; the resulting categorical diagrams provide a way to visualize and reason about the semantic composition process. 

\section{Conclusion and future work}\label{sec:concl}
We have recast intensional models in formal semantics into the framework of category theory by building \cat{ModInt}, the category of intensional models. In doing so, we obtain a consistent structural representation of intensional models related to the category of Kripke frames, itself built on the categories of sets and relations. Additionally, we proved the theorem that states that a trivial intensional model is equivalent to an extensional model, thereby providing a complete picture of intensionality and extensionality. Much work, then, has been done to recover an extensional model from an intensional one; far from reinventing the wheel as a square, the CT approach lends an interpretation to the processing of intensions as modular, independent of intension of origin, and eventually returns to the extension of a language fragment. This provides an alternative perspective to how extensions are \emph{cohesively} related to intensions: simply, they are the result of a morphism in a category.

An (inelegant) inclusion of context into \cat{ModInt} is the addition of more frame structures to the intensional model; to limit this, applying the notion of contextual indexicals a la Kaplan and Lewis is a natural direction for further work, as has been discussed in Section~\ref{sec:disc}.

CT has a propensity for collecting adjectives. \cat{Set}, for example, is an \f{\infty}-groupoid, or an \f{\infty}-category that is small, discrete, and skeletal; \cat{Rel} is a subcategory of \cat{Set}, is a dagger category, and is self-dual; \cat{Kr} is dual to \cat{CABAO\textsubscript{c}} which is a category whose objects are complete atomic boolean algebras (CABAs) with operators and CABAO complete homomorphisms, while \cat{Kr\textsubscript{b}} is dual to \cat{CABAO} \sans complete \cite{Thomason1975,Kishida2017}; likewise \cat{Rel} is dual to \cat{CABA}. These algebras are interesting in their own right; \cat{ModInt} is essentially a highly decorated composite of \cat{Set} and \cat{Kr\textsubscript{b}}, so its algebraic properties are now sensitive to types. A study into the collection of adjectives that qualify \cat{ModInt} is a natural next step.

\newpage
\bibliography{references}

\end{document}
\typeout{get arXiv to do 4 passes: Label(s) may have changed. Rerun}